\newtheorem{theorem}{Theorem}
\newtheorem{lemma}{Lemma}
\newtheorem{definition}{Definition}
\newcommand*{\dd}{{\,\mathrm{d}}}
\def\@email#1#2{%
 \endgroup
 \patchcmd{\titleblock@produce}
  {\frontmatter@RRAPformat}
  {\frontmatter@RRAPformat{\produce@RRAP{*#1\href{mailto:#2}{#2}}}\frontmatter@RRAPformat}
  {}{}
}%
\title{Computing Generalized Eigenfunctions\\
in Rigged Hilbert Spaces}
\author{Matthew J. Colbrook\thanks{Department of Applied Mathematics and Theoretical Physics, University of Cambridge, Cambridge, UK (m.colbrook@damtp.cam.ac.uk)}
\and Andrew Horning\thanks{Department of Mathematical Sciences, Rensselaer Polytechnic Institute, Troy, NY, USA}
\and Tianyiwa Xie\thanks{Department of Applied Mathematics and Theoretical Physics, University of Cambridge, Cambridge, UK}}
\date{}
\begin{document}

\maketitle
\begin{abstract}
We introduce a simple, general, and convergent scheme to compute generalized eigenfunctions of self-adjoint operators with continuous spectra on rigged Hilbert spaces. Our approach does not require prior knowledge about the eigenfunctions, such as asymptotics or other analytic properties. Instead, we carefully sample the range of the resolvent operator to construct smooth and accurate wave packet approximations to generalized eigenfunctions. We prove high-order convergence in key topologies, including weak-$^*$ convergence for distributional eigenfunctions, uniform convergence on compact sets for locally smooth generalized eigenfunctions, and convergence in seminorms for separable Frechet spaces, covering the majority of physical scenarios. The method's performance is illustrated with applications to both differential and integral operators, culminating in the computation of spectral measures and generalized eigenfunctions for an operator associated with Poincaré’s internal waves problem. These computations corroborate experimental results and highlight the method's utility for a broad range of spectral problems in physics.
\end{abstract}

\begin{keywords}
continuous spectrum, generalized eigenfunction, rigged Hilbert space, limiting absorption principle, internal waves
\end{keywords}

\begin{AMS}
	65J99,	65B99, 47A70, 46-08, 47-08,	
\end{AMS}

\section{Introduction}

The spectrum of any finite matrix consists only of discrete eigenvalues. However, many linear operators in physics have a \textit{continuous} spectral component. Common examples include differential operators \cite{izrail1965direct,levitan1975introduction,titchmarsh2011elgenfunction}, coupled systems of differential equations \cite{grubb1977essential,kako1987essential,raikov1991spectrum,langer1996essential}, as well as differential-integral operators and singular integral operators \cite{friedrichs1948perturbation,koppelman1960spectral,cuminato2007review,muskhelishvili2008singular}. In particular, operators associated with physical systems that scatter or radiate energy often exhibit a combination of continuous and discrete spectra \cite{amrein1973characterization,enss1978asymptotic,ruelle1969remark}.

If a self-adjoint operator $A$ has a continuous spectral component, then the eigenvectors of $A$ do not form a basis for the Hilbert space $H$ and do not diagonalize $A$. However, one may look beyond $H$ for \textit{generalized eigenfunctions}, linear functionals acting on dense subspaces of $H$, associated with points in the continuous spectrum. Generalized eigenfunction expansions have a rich history with roots in the theory of singular Sturm--Liouville problems~\cite{weyl1910gewohnliche}, ordinary differential equations~\cite{krein1946general,krein1948hermitian,krein1949fundamental,krein1950one,gelfand1955eigenfunction},  and elliptic partial differential equations~\cite{browder1954eigenfunction,gaarding1954application,mautner1953eigenfunction,berezansky1956generalization,berezanskii1957eigenfunction,kac1961spectral}. See for example, the work of Povzner \cite{povzner1953expansion}, Ikebe \cite{ikebe1960eigenfunction}, and Agmon~\cite{agmon1975spectral}, who used these expansions in scattering theory, and the abstract generalizations given by Kuroda \cite{kuroda1967perturbation,kuroda1967abstract} and Kato and Kuroda \cite{kato1970theory}. Later, Gelfand and collaborators~\cite{gel1967generalized,gel2016generalized} showed that generalized eigenfunction expansions for a broad class of self-adjoint operators can be established within a \textit{rigged Hilbert space}, that is, a Gelfand triple consisting of three nested spaces $\Phi \subset H\subset \Phi^*$.\footnote{The term ``rigged" is a translation of the Russian participle ``osnashchyonoe" indicating that the Hilbert space has been equipped with the additional structure of the triple $\Phi\subset H\subset \Phi^*$.} Here, $\Phi$ is a nuclear space and $\Phi^*$ is its dual \cite[Section 29.1]{blanchard2015mathematical}. The nuclear spectral theorem states that if $A$ is self-adjoint with $A\Phi\subset \Phi$, then $A$ has a complete set of generalized eigenfunctions in $\Phi^*$ \cite{gel1967generalized,gel2016generalized}. It is often stated that employing the rigged Hilbert space framework for diagonalization affords a physically meaningful and mathematically rigorous interpretation of the Dirac bra-ket formulation commonly utilized in quantum mechanics~\cite{roberts1966rigged,de2005role}.

Despite the appearance of continuous spectra in domains across pure and applied mathematics, there have been relatively few general and convergent numerical algorithms to compute continuous spectral properties. Recently, such schemes been developed for projection-valued spectral measures \cite{colbrook2019computing,colbrook2020computing,2023computingTI}. However, several key challenges stand in the way of a computational framework for generalized eigenfunction expansions.
\begin{itemize}[leftmargin=*,noitemsep]
\item Generalized eigenfunctions are linear functionals on an infinite-dimensional space. \textit{Given finite resources on a computer, what form should our approximations take?}
\item Theoretical foundations for computing spectral measures are inherently Hilbert space theoretic, yet nontrivial generalized eigenfunctions are not elements of a Hilbert space. \textit{Can we develop a rigorous foundation for our computational framework?}
\item In many practical situations, generalized eigenfunctions have important additional regularity beyond that of a generic linear functional (for example, locally smooth generalized eigenfunctions of singular Sturm--Liouville problems). \textit{Can our general approximation scheme capture these additional fine properties?}
\end{itemize}
To address these questions, we outline a simple procedure for computing high-order approximations to generalized eigenfunctions in a rigged Hilbert space. By carefully sampling the resolvent operator in the complex plane, we construct smooth wave packet approximations formed from a narrow band of spectral content. These wave packets are easy to work with on the computer because they are proper functions in a smooth subspace of the Hilbert space. However, as the effective bandwidth of spectral content decreases, these wave packets approximate the action of generalized eigenfunctions within the rigged Hilbert space. Our approach can be understood as a generalization of Stone's formula~\cite{Stone} and the limiting absorption principle to high-order approximations, offering improved accuracy and stability for numerical computation.

Our approximations are accompanied by a rigorous theory of high-order convergence. We show that the wave packets converge to generalized eigenfunctions in the weak-$^*$ topology on $\Phi^*$, i.e., that the coefficients of test functions in the generalized eigenfunction expansion converge at a rate controlled by the local regularity of the spectral measure (see~\cref{thm:conv_ae,thm:conv_rates}). Moreover, we show that the wave packet approximations capture important and natural fine properties of the generalized eigenfunctions: they converge uniformly on compact sets where the generalized eigenfunctions can be identified locally with continuous functions (see~\cref{thm:conv_point}). We also provide general conditions for weak and seminorm convergence to generalized eigenfunctions in Suslin spaces (see~\cref{thm:conv_abs}), a class of separable locally convex spaces that capture most  physically relevant spaces used in applied analysis, including Frechet spaces (hence, Banach and Hilbert spaces) and the usual spaces of distributions and test functions~\cite{thomas1975integration}.

Crucially, our method is widely applicable and easy to implement. It does not require prior knowledge of the eigenfunctions, such as the matching asymptotics typically used in scattering algorithms. The result is a theoretically sound, user-friendly tool that opens the door to computational spectral analysis in rigged Hilbert spaces to mathematicians and scientists alike. The source code for the algorthms and numerical experiments in this paper are available in a public repository at \href{https://github.com/SpecSolve/GenEigs}{https://github.com/SpecSolve/GenEigs}.

The paper is organized as follows. In \cref{sec:diagonalization}, we discuss generalized eigenfunction expansions and their construction via spectral measures. We illustrate with two canonical examples. \cref{sec:comput_geneigs} introduces our method and several convergence theorems. We include a variety of numerical examples with differential and integral operators. In \cref{sec:int_waves}, we use our method to compute spectral measures and generalized eigenfunctions of an operator that governs Poincar\'e's internal waves problem. Recent interest in the analysis of this operator and its spectral properties has surged~\cite{de2020attractors,colin2020spectral,dyatlov2019microlocal,tao20190,wang2022dynamics,wangscattering,dyatlov2021mathematics}. We demonstrate the effectiveness of our methods by computing generalized eigenfunctions, which corroborate experimental realizations of internal waves as observed in \cite{hazewinkel2010observations}.

\vspace{2mm}

\noindent{}\textbf{Notation:} We denote the duality pairing between a topological vector space $\Phi$ and its dual $\Phi^*$ by $\langle\cdot | \cdot\rangle$. The inner product on a Hilbert space $H$ is $\langle\cdot,\cdot\rangle$, which is conjugate linear in the second argument, and the associated Hilbert norm is $\|\cdot\|$. Throughout, $A:D(A)\rightarrow H$ is a self-adjoint operator on $H$ with domain $D(A)$ and spectrum denoted by the set $\Lambda(A)\subset\mathbb{R}$. The resolvent of $A$ is $R_A(z)=(A-z)^{-1}$, a bounded operator on $H$ for every $z\not\in\Lambda(A)$ with operator norm $\|R_A(z)\|=1/\mathrm{dist}(z,\Lambda(A))\leq 1/|{\rm Im}(z)|$. The characteristic function of a set $\Omega$ is denoted by $\chi_\Omega$.

\section{Diagonalization in a rigged Hilbert space}\label{sec:diagonalization}

Here, we examine two classical methods for diagonalizing a self-adjoint operator on a separable Hilbert space $H$. The first method, suitable for any self-adjoint operator, employs projection-valued spectral measures. The second method is applicable specifically when the Hilbert space is equipped with a dense nuclear subspace; here, generalized eigenfunctions are employed to diagonalize the operator. These generalized eigenfunctions can be explicitly constructed using the projection-valued measure.

\subsection{Diagonalization by spectral measures}\label{sec:measures}

A self-adjoint operator $A$ acting on a separable Hilbert space $H$ may not possess a basis of eigenfunctions, or indeed any eigenfunctions at all. Nevertheless, it is always diagonalizable using a \textit{projection-valued measure}, as described in \cite[Thm.~VIII.6]{reed1972methods}. A projection-valued measure, also known as a resolution of the identity, is a countably additive function $E:\mathcal{B}(\Lambda(A))\rightarrow B(H)$ that assigns an orthogonal projector to each Borel-measurable set $S\in\mathcal{B}(\Lambda(A))$. These orthogonal projections are complete in $H$ and diagonalize $A$ in the sense that
\begin{equation}\label{eqn:cont_decomp}
	f=\int_\mathbb{R} \dd E(\lambda)f\quad \forall f\in H \qquad\text{and}\qquad Au=\int_\mathbb{R} \lambda\dd E(\lambda)u \quad \forall u\in D(A).
\end{equation}
The projection-valued measure $E$ extends the notion of orthogonal projectors onto eigenspaces, typical for a self-adjoint matrix, to the broader context of self-adjoint operators. The primary goal of our study is to extend this framework to compute projections onto \textit{generalized eigenspaces} for operators, especially those with a continuous spectrum.

Given any $f\in H$, the scalar-valued spectral measure of $A$ with respect to $f$ is defined by $\mu_f(S)=\langle E(S)f,f\rangle$. If $f$ is normalized such that $\|f\|=1$, then $\mu_f$ is a probability measure on $\mathbb{R}$. In quantum mechanics, where $A$ represents an observable and $f$ a quantum state, $\mu_f$ describes the likelihood of various outcomes upon measuring the observable. The Lebesgue decomposition of $\mu_f$~\cite{stein2009real} allows us to write the spectral measure as a sum of absolutely continuous, singularly continuous, and discrete components, where we take absolutely continuous with respect to the Lebesgue measure $\dd\lambda$. This can be represented as:
\begin{equation}\label{eqn:spec_meas}
\dd\mu_f(\lambda)= \underbrace{\rho_f(\lambda)\dd\lambda +\dd\mu_f^{(\mathrm{sc})}(\lambda)}_{\text{continuous part}} + \underbrace{\sum_{\lambda_*\in\Lambda^{{\rm p}}(A)}\langle P_{\lambda_*} f,f\rangle\,\delta({\lambda-\lambda_*})\dd\lambda}_{\text{discrete part}}.
\end{equation}
Here, $\Lambda^{{\rm p}}(A)$ is the set of eigenvalues of $A$, $\delta(\lambda-\lambda_*)$ denotes a Dirac point mass located at eigenvalue $\lambda_*$ of $A$ and $P_{\lambda_*}$ is the orthogonal projector onto the invariant subspace associated with $\lambda_*$. The mixed spectral measure with respect to $f_1,f_2\in H$ is defined by $\mu_{f_1,f_2}(S):=\langle E(S)f_1,f_2\rangle$ for $S\in\mathcal{B}(\Lambda(A))$, and can be recovered from the set of measures $\{\mu_f:f\in H\}$ using the polarization identity.

\subsection{Diagonalization by generalized eigenfunctions}\label{sec:gefs}

While the eigenfunctions of $A$ may not span $H$, many operators encountered in applications possess \textit{generalized eigenfunctions}. These are distributions that act on a dense subset of \( H \). Suppose \( \Phi \) is a topological vector space that embeds continuously and densely into \( H \). Let \( \Phi^* \) denote its topological dual, which is the space of continuous linear functionals on \( \Phi \). Under the typical identification \( H = H^* \), the embedding \( H \hookrightarrow \Phi^* \) is continuous and dense. Moreover, the inner product on \( H \) aligns with the duality pairing between \( \Phi \) and \( \Phi^* \). Specifically, for \( \psi \in H \) (also in \( \Phi^* \)) and \( \phi \in \Phi \) (also in \( H \)), the relation \( \langle\psi|\phi\rangle = \langle\psi, \phi\rangle \) holds. The triplet of embedded spaces, \( \Phi \subset H \subset \Phi^* \), is termed a \textit{rigged Hilbert space}. This is also called a Gel'fand triple.

If the space \( \Phi \) is invariant under \( A \), meaning \( A\Phi \subset \Phi \), we can search for distributional solutions to the eigenvalue problem. A functional \( \psi \in \Phi^* \) is termed a \textit{generalized eigenfunction} of \( A \) if there exists a scalar \( \lambda \) such that
\begin{equation}\label{eqn:def_geneig}
\langle \psi | A\phi \rangle = \lambda \langle \psi | \phi \rangle \qquad \forall\phi \in \Phi.
\end{equation}
Notably, any eigenfunction \( u \in H \) is also a generalized eigenfunction. This is because \( \langle u | A\phi \rangle = \langle u, A\phi \rangle = \langle Au, \phi \rangle = \lambda \langle u, \phi \rangle = \lambda \langle u | \phi \rangle \). If $\Phi$ is a \textit{nuclear space}, the generalized eigenfunctions are complete and diagonalize \( A \). The space \( \Phi \) is termed \textit{countably Hilbert} if its topology is generated by a countable system of compatible norms, which are induced by inner products~\cite[Ch.~3.1]{gel2016generalized}. Moreover, it is called nuclear if every one-parameter family of continuous linear functionals on $\Phi$ with weakly bounded variation also has strongly bounded variation (see, e.g., p.~$178$ of Gel'fand and Shilov\cite{gel1967generalized} for further construction, characterization, and illustrations of nuclear spaces).

When \( \Phi \) is a nuclear space, a complete set of generalized eigenfunctions diagonalizing \( A \) can be constructed explicitly using the projection-valued measure \( E \) as follows~\cite[Ch.~IV]{gel1967generalized}. For $a<b$, we let $\Delta_a^b=(a,b)$. Consider a fixed $f\in H$. For any $h>0$, note that $\smash{\|E(\Delta_\lambda^{\lambda+h})f\|^2=\mu_f(\Delta_\lambda^{\lambda+h})}$. Suppose that $\smash{\mu_f(\Delta_\lambda^{\lambda+h})>0}$, then we can define the vector
$$
u_{\lambda}^{(h)}=\frac{1}{\mu_f(\Delta_\lambda^{\lambda+h})}\cdot E(\Delta_\lambda^{\lambda+h})f=\frac{1}{\mu_f(\Delta_\lambda^{\lambda+h})}\cdot \int_{\Delta_\lambda^{\lambda+h}} 1 \dd E(y)f.
$$
It follows that
$$
\left\|A u_{\lambda}^{(h)}-\lambda u_{\lambda}^{(h)}\right\|=\left\|\int_{\Delta_\lambda^{\lambda+h}} (y-\lambda) \dd E(y) u_{\lambda}^{(h)}\right\| \leq h\left\|u_{\lambda}^{(h)}\right\|,
$$
so that $u_{\lambda}^{(h)}$ is `almost' an eigenfunction. This motivates taking the limit $h\downarrow 0$ in $\Phi^*$, and it can be shown that the weak limit
\begin{equation}\label{eqn:pvm_to_geig}
u_\lambda = \lim_{h\downarrow 0}\frac{1}{\mu_f(\Delta_\lambda^{\lambda+h})}\cdot E(\Delta_\lambda^{\lambda+h})f,
\end{equation}
exists for $\mu_f$-a.e. $\lambda\in\mathbb{R}$ as a continuous linear functional on $\Phi$. Moreover, the measure $\mu_{f,\phi}$ is $\mu_f$-absolutely continuous for all $\phi\in\Phi$ and the action of $u_\lambda$ is given by the Radon-Nikodym derivative~\cite[pp.~184--18]{gel1967generalized}
\begin{equation}\label{eqn:geig_action}
\langle u_\lambda| \phi\rangle = \frac{\mathrm{d}\mu_{f,\phi}}{\mathrm{d}\mu_f}(\lambda)\qquad\forall\phi\in\Phi.
\end{equation}
The functional $u_\lambda\in\Phi^*$ is a generalized eigenfunction of $A$, satisfying $\langle u_\lambda|A\phi\rangle = \lambda\langle u_\lambda|\phi\rangle$ for all $\phi\in\Phi$. Moreover, any function $\phi\in \Phi$ in the subspace of $H$ spanned by functions of the form $E(\Delta_{-\infty}^\lambda)f$ has the expansion
\begin{equation}\label{eqn:completeness}
\phi = \int_\mathbb{R} \overline{\langle u_\lambda|\phi\rangle} u_\lambda\dd\mu_f(\lambda), \qquad\text{with}\qquad \|\phi\|^2=\int_\mathbb{R} |\langle u_\lambda |\phi \rangle|^2\dd\mu_f(\lambda).
\end{equation}
Since $H$ can be written as a direct sum of such subspaces, one can construct a complete set of generalized eigenfunctions of $A$ that have the form in~\eqref{eqn:pvm_to_geig}. That is, let $H=\oplus_{k\in S} H_k$ where each subspace $H_k$ is spanned by $\{E(\Delta_{-\infty}^\lambda)f_k\}_{\lambda\in\mathbb{R}}$, for  $f_k\in H$, $k\in S$, where $S$ is countable. There is a complete set of generalized eigenfunctions $u_{\lambda,k}\in\Phi^*$ such that, for any $\phi\in\Phi$,
\begin{equation}
\phi = \sum_{k\in S}\int_\mathbb{R} \overline{\langle u_{\lambda,k}|\phi\rangle} u_{\lambda,k}\dd\mu_{f_k}(\lambda), \qquad\text{and}\qquad A\phi = \sum_{k\in S}\int_\mathbb{R} \lambda\overline{\langle u_{\lambda,k}|\phi\rangle} u_{\lambda,k}\dd\mu_{f_k}(\lambda).
\end{equation}
This expansion can be found in, e.g., Remark~$1$ on pp.~$188$-$189$ of Gel'fand and Shilov\cite{gel1967generalized}.
The number of nonzero functionals $u_{\lambda,k}$, for $k\in S$, associated with a point $\lambda\in\Lambda(A)$ is the \textit{multiplicity} of the generalized eigenvalue $\lambda$. 

\subsection{Two canonical examples}\label{sec:canon_ex}

To illustrate the abstract framework for diagonalization in a concrete setting, we introduce two canonical examples. We will return to these examples to illustrate fundamental approximation properties and numerical considerations for our scheme in~\cref{sec:comput_geneigs}.

\subsubsection{Multiplication operator}\label{sec:mult_ex}

First, let $\Omega = (-1, 1)$ and consider the multiplication operator $P$ on $H=L^2(\Omega)$ defined by $[Pu](x) = p(x)u(x)$, where $p(x)=x^3-x$. We rig $H$ with the nuclear space $\Phi=C^\infty(\Omega)$ of infinitely differentiable functions on $\Omega$ and its continuous dual, the space of compactly supported distributions on $\Omega$. Polynomial multiplication operators and perturbations thereof play a distinguished role in the theory of differential and pseudodifferential operators and provide simple pedagogical illustrations of key phenomena~\cite{halmos1982hilbert,arveson2002short,weidmann2012linear}. 

\begin{figure}[tbp!]
    \centering
    \begin{minipage}{0.48\textwidth}
        \begin{overpic}[width=\textwidth]{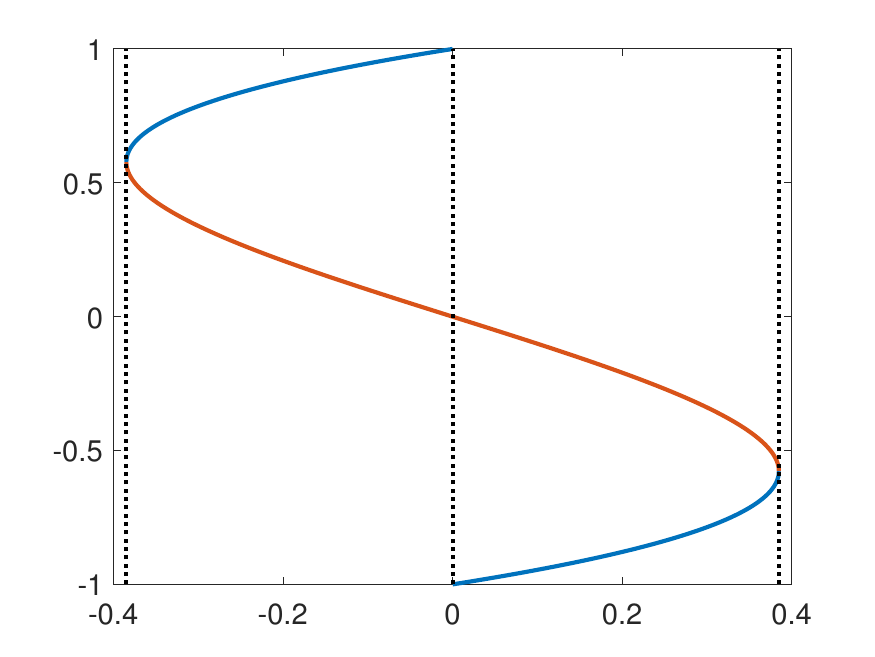}
        \put (50,-2) {$\displaystyle \lambda$}
        \put (12,72) {$\displaystyle \text{Inverse branches of } p(x)=x^3-x $}
        \put (30,61) {{$\displaystyle p_1^{-1}(\lambda)$}}
        \put (38,45) {{$\displaystyle p_2^{-1}(\lambda)$}}
        \put (62,16) {{$\displaystyle p_1^{-1}(\lambda)$}}
        \end{overpic}
    \end{minipage}
    \begin{minipage}{0.48\textwidth}
        \begin{overpic}[width=\textwidth]{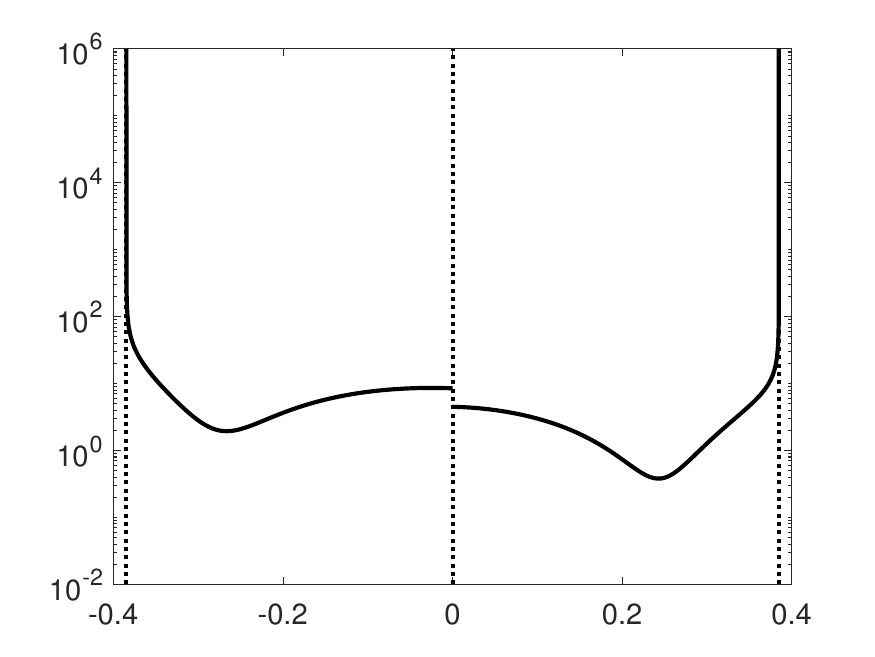}
        \put (50,-2) {$\displaystyle \lambda$}
        \put (48,72) {$\displaystyle \rho_{f}(\lambda) $}
        \end{overpic}
    \end{minipage}
    \caption{\label{fig:mult_op} Branches of the inverse of $p(x)=x^3-x$, denoted $p_k^{-1}$ for $k=1,2$, are plotted in the left panel. The associated spectral measure of the multiplication operator $[Pu](x)=(x^3-x)u(x)$ with respect to $f(x)=(2+x)\cos(2\pi x)$ has singularities at the endpoints and the origin.}
\end{figure}

The spectrum of $P$ is absolutely continuous with $\Lambda(P)=[-2\sqrt{3}/9,\,\, 2\sqrt{3}/9]$, which is simply the closure of the range of $p(x)$ on $\Omega$. Each point $\lambda$ in the spectrum is associated with generalized eigenfunctions of the form $\delta(x-x_k)$ where $p(x_k)=\lambda$. In other words, the generalized eigenfunctions are Dirac delta distributions centered at points in the preimage of $\lambda$ under the map $x\rightarrow p(x)$. The spectral measure $\mathrm{d}\mu_{f,g}=\rho_{f,g}\dd\lambda$ can be inferred from the quadratic form via a change of variables $\lambda=p(x)$:
\begin{equation}\label{eqn:multop_spectral_measure1}
\begin{aligned}
\langle Pf,g\rangle = \int_{-1}^1 p(x)f(x)g(x)\dd x = \int_{\Lambda(P)}
\end{aligned} \lambda \,\sum_{k=1}^2\left[\frac{f(p_k^{-1}(\lambda))g(p_k^{-1}(\lambda))}{p'(p_k^{-1}(\lambda))}\right]\dd\lambda.
\end{equation}
Here, each function $p^{-1}_k$, $k=1,2$, is a branch of the inverse of $p(x)$ with domain $\Lambda(P)$. Comparing~\cref{eqn:multop_spectral_measure1} with the diagonalization identity in~\cref{eqn:cont_decomp} (after taking inner products with $g$ on both sides) leads to the identification (a.e. with respect to Lebesgue measure)
\begin{equation}\label{eqn:multop_spectral_measure2}
\rho_{f,g}(\lambda) = \sum_{k=1}^2 \left[\frac{f(p_k^{-1}(\lambda))g(p_k^{-1}(\lambda))}{p'(p_k^{-1}(\lambda))}\right] \qquad\text{for almost every} \qquad \lambda\in\Lambda(P).
\end{equation}
The branches $p^{-1}(k)$ and the Radon-Nikodym $\rho_f$ of the spectral measure with respect to $f(x) = (2+x)\cos(2\pi x)$ are plotted in~\cref{fig:mult_op} with the singularity locations of $\rho_f$ indicated by dashed lines in each. Notice that the spectrum has multiplicity $2$, corresponding to the two branches of the inverse, except at the point $\lambda=0$ where the branch $p^{-1}(\lambda)$ has a jump discontinuity and the spectrum has multiplicity $1$. When $f,g\in\Phi$, the density $\smash{\rho_{f,g}}$ is infinitely differentiable except for a possible jump discontinuity at $\lambda=0$ and blow-up singularities at the endpoints $\lambda = \pm 2\sqrt{3}/9$ of the spectrum. We examine several examples in detail in~\cref{sec:comput_geneigs} to illustrate the connection between singularities of spectral measures and multiplicity changes, and we quantify the local influence of singularities in the Radon-Nikodym $\rho_f$ on our numerical approximations in~\cref{sec:convergence}.

\subsubsection{Differential operator}\label{sec:diff_ex}

Next, consider the differential operator $Au=-u''$ defined on $W^2(\mathbb{R})$, the space of square-integrable functions on the real line possessing two weak square-integrable derivatives. Then, $A:W^2(\mathbb{R})\rightarrow L^2(\mathbb{R})$ and we can rig $L^2(\mathbb{R})$ with the dense subspace of functions in the Schwartz space $S(\mathbb{R})$ and its dual $S^*(\mathbb{R})$, the space of tempered distributions. The spectrum of $A$ is absolutely continuous on the positive real axis and $A$ is unitarily equivalent to multiplication by $\smash{x^2}$ under the Fourier transform.

Although $A$ has no eigenfunctions in $L^2(\mathbb{R})$, we can calculate its generalized eigenfunctions with two integration-by-parts:
$$
\int_{-\infty}^\infty e^{\pm 2\pi ikx}A\phi(x)\dd x = 4\pi^2k^2\int_{-\infty}^\infty e^{\pm 2\pi ikx}\phi(x)\dd x,
$$
whenever $\phi\in S(\mathbb{R})$. This means that the generalized eigenfunctions of $A$ are Fourier modes, which are complete in $S(\mathbb{R})$. Note that there are two generalized eigenfunctions $\exp(\pm i\sqrt{\lambda} x)$ associated with each point $\lambda\in\Lambda(A)=[0,+\infty)$, that is, each generalized eigenvalue has multiplicity two. With a change-of-variables argument in the Fourier domain, similar to~\cref{eqn:multop_spectral_measure1}, we obtain
$$
\rho_{f,g}(\lambda) = \frac{1}{4\pi\sqrt{\lambda}}\left[\hat f\left(\frac{\sqrt{\lambda}}{2\pi}\right)\hat g\left(\frac{\sqrt{\lambda}}{2\pi}\right)+\hat f\left(\frac{-\sqrt{\lambda}}{2\pi}\right)\hat g\left(\frac{-\sqrt{\lambda}}{2\pi}\right)\right],
$$
where $\hat u(k) = \int_\mathbb{R} e^{-2\pi i kx} u(x)\,\mathrm{d}x$ denotes the Fourier transform of the square integrable function $\smash{u\in L^2(\mathbb{R})}$. If $f,g\in\Phi$, then the spectral measure is again infinitely differentiable except possibly at the endpoint $\lambda=0$ of the spectrum. The inverse branches of the Fourier multiplier $\smash{p(x)=x^2}$ associated with $A$ are plotted in~\cref{fig:diff_op} (left panel) along with the Radon-Nikodym derivative $\rho_f$ of the spectral measure with respect to $\exp(-\pi x^2)$ (right panel). The singularity of $\rho_f$ at $\lambda=0$ is indicated by a dashed line.

\begin{figure}[tbp!]
    \centering
    \begin{minipage}{0.48\textwidth}
        \begin{overpic}[width=\textwidth]{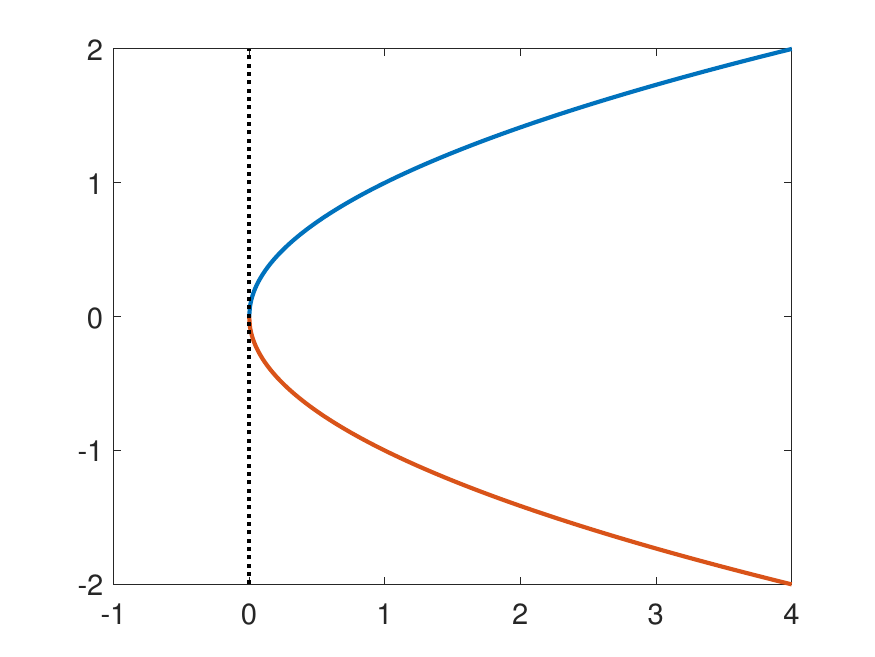}
        \put (50,-2) {$\displaystyle \lambda$}
        \put (17,72) {$\displaystyle \text{Inverse branches of } p(x)=x^2 $}
        \put (50,52) {{$\displaystyle p_1^{-1}(\lambda)=\sqrt{\lambda}$}}
        \put (50,24) {{$\displaystyle p_2^{-1}(\lambda)=-\sqrt{\lambda}$}}
        \end{overpic}
    \end{minipage}
    \begin{minipage}{0.48\textwidth}
        \begin{overpic}[width=\textwidth]{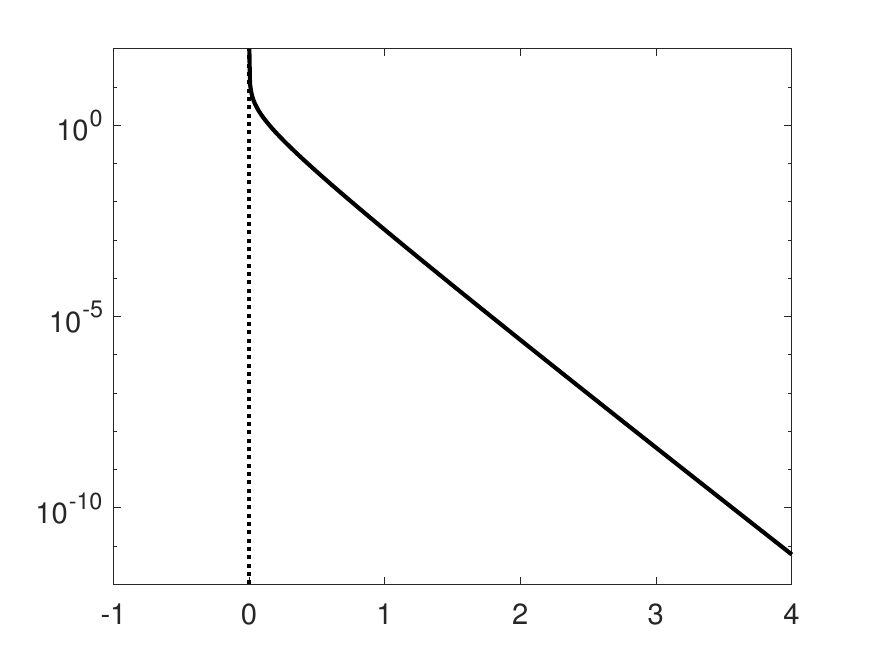}
        \put (50,-2) {$\displaystyle \lambda$}
        \put (48,72) {$\displaystyle \rho_{f}(\lambda) $}
        \end{overpic}
    \end{minipage}
    \caption{\label{fig:diff_op} Branches of the inverse of the Fourier multiplier $p(x)=x^2$ associated with the operator $[Au](x)=-u''(x)$. The associated spectral measure of $A$ with respect to $f(x)=\exp(-\pi x^2)$ has a single singularity at the endpoint $\lambda=0$ of the spectrum.}
\end{figure}

\section{Computing generalized eigenfunctions}\label{sec:comput_geneigs}

To compute generalized eigenfunctions, we employ \textit{wave packet} approximations of the form:
\begin{equation}\label{eqn:wave_packet}
u_\lambda^{(\epsilon)} = \int_\mathbb{R} K_\epsilon(\lambda - \tilde\lambda)\dd E(\tilde\lambda)f, \qquad f\in H.
\end{equation}
In this expression, $f$ is a fixed vector in $H$.
The family $\{K_\epsilon\}_{\epsilon>0}$ represents approximate identities, also known as smoothing kernels, with $K_\epsilon(x)=K(x/\epsilon)/\epsilon$ for some $K\in L^1(\mathbb{R})$. As $\epsilon\rightarrow 0$, the approximation $\smash{u^{(\epsilon)}_\lambda\in H}$ is formed from a wave packet of generalized eigenfunctions increasingly concentrated near the point $\lambda$. One could also consider the choice of $K_\epsilon$ as $\smash{\chi_{[\lambda,\lambda+\epsilon]}/\mu_f(\Delta_\lambda^{\lambda+\epsilon})}$, which recovers the difference quotient in~\cref{eqn:def_geneig}. However, this choice is computationally impractical. A computational scheme relying on this particular $K_\epsilon$ necessitates further approximations to evaluate the convolution in~\cref{eqn:wave_packet}. As an alternative, we utilize a family of \textit{rational kernels} that approximate the identity, allowing for a direct evaluation of the convolution in~\cref{eqn:wave_packet} using the resolvent $R_A(z)$. Our approach generalizes schemes based on Stone's theorem and limiting absorption principles to high-order approximations with improved accuracy and stability properties for numerical computation.

We begin with a basic illustration using the Poisson kernel to elucidate key approximation properties in the context of two canonical examples: multiplication and differentiation operators. However, the Poisson kernel leads to a method with a slow, order-one, convergence rate. To overcome this, we develop high-order analogs of the Poisson kernel, providing convergence rates for weak and pointwise approximation of generalized eigenfunctions for suitable classes of operators. We end the section with further examples.

\subsection{The Poisson kernel}\label{sec:poisson}

To illustrate, let $K_\epsilon(\lambda)$ be the Poisson kernel for the half-plane~\cite[p.~111]{stein2009real}. The functional calculus links~\cref{eqn:wave_packet} to the resolvent of $A$, as
\begin{equation}\label{eqn:stone}
\begin{split}
u_\lambda^{(\epsilon)} = \frac{1}{\pi}\int_\mathbb{R} \frac{\epsilon \dd E(\tilde\lambda)}{(\lambda-\tilde\lambda)^2+\epsilon^2}f &= \frac{1}{2\pi i}\int_\mathbb{R} \left[\frac{1}{\lambda-\tilde\lambda-i\epsilon}-\frac{1}{\lambda-\tilde\lambda+i\epsilon}\right]\dd E(\tilde\lambda)f\\
&= \frac{1}{2\pi i}\left[R_A(\lambda-i\epsilon)f-R_A(\lambda+i\epsilon)\right]f.
\end{split}
\end{equation}
For almost every point $\lambda\in\Lambda(A)$, the convolution on the left converges to a renormalized generalized eigenfunction, associated with $\lambda$, in the sense of distributions as $\epsilon\rightarrow 0$~\cite{derzko1972generalized}. On the other hand, the right-hand side can be computed directly by applying the resolvent to $f\in H$, i.e., by numerically solving the two linear operator equations $\smash{(A-(\lambda\pm i\epsilon))u_{\pm}=f}$ and taking the difference of the solutions. This relationship between the resolvent ``jump" and the generalized eigenfunctions is analogous to Stone's theorem for spectral measures~\cite{Stone}. To illustrate the basic features of this general computational approach, we examine several canonical examples: polynomial multiplication operators, constant coefficient differential operators, and perturbations thereof.

\subsubsection{Example 1: Multiplication operator}\label{sec:mult_poisson}

First, consider the multiplication operator $[Pu](x)=(x^3-x)\,u(x)$, where $x\in\Omega = (-1,1)$, from~\cref{sec:mult_ex}. Given $f\in C^\infty(\Omega)$, we use the change of variables $\hat\lambda=p(x)$ to explicitly compute the action of $u_\lambda^{(\epsilon)}$ on $\phi\in C^\infty(\Omega)$. We find that
\begin{equation}\label{eqn:mult_poisson_action}
\begin{split}
\langle u_\lambda^{(\epsilon)},\phi\rangle& = \frac{1}{2\pi i}\left\langle(P-\lambda+i\epsilon)^{-1}f-(P-\lambda-i\epsilon)^{-1}f,\phi\right\rangle\\
&=\frac{1}{\pi}\int_\Omega \frac{\epsilon f(x)\phi(x)}{(p(x)-\lambda)^2+\epsilon^2}\dd x 
= \frac{1}{\pi}\int_{\Lambda(P)} \sum_{k=1}^2\frac{\epsilon f(p_k^{-1}(\hat\lambda))\phi(p_k^{-1}(\hat\lambda))}{(\hat\lambda-\lambda)^2+\epsilon^2}\dd\hat\lambda.
\end{split}
\end{equation}
The right-hand side is simply the convolution of the density $\rho_{f,\phi}$ with the Poisson kernel and, therefore, converges pointwise to $\rho_{f,\phi}(\lambda)$ for all $\lambda\in\Omega\setminus\{0\}$. In other words, as $\epsilon\rightarrow 0$, we have that in the sense of distributions on $[-1,1]$,
\begin{equation}\label{eqn:mult_poisson_geig}
u_\lambda^{(\epsilon)}(x) \rightarrow \sum_{k=1}^2 f(p_k^{-1}(\lambda))\delta(x-p_k^{-1}(\lambda)), \qquad\text{for}\qquad \lambda\in\Omega\setminus\{0\}.
\end{equation}
We obtain a generalized eigenfunction of $P$ associated with $\lambda$, essentially a projection of $f$ onto this generalized eigenspace.

\begin{figure}[tbp!]
    \centering
    \begin{minipage}{0.48\textwidth}
        \begin{overpic}[width=\textwidth]{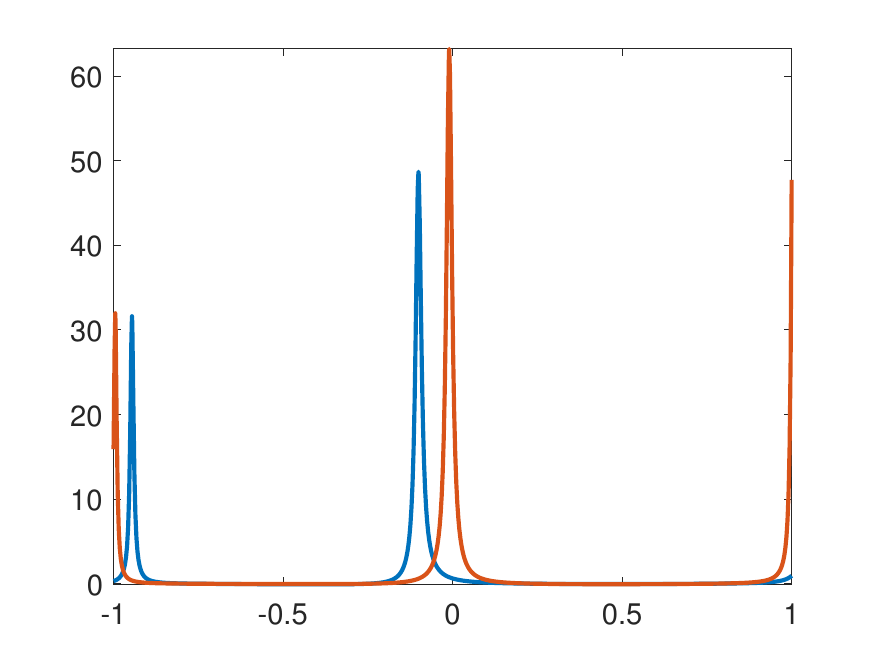}
        \put (50,-1) {$\displaystyle x$}
        \put (45,72) {$\displaystyle u_\lambda^{(\epsilon)}(x)$}
        \put (28,52) {{$\displaystyle \lambda = 0.1$}}
        \put (54,30) {{$\displaystyle \lambda = 0.01$}}
        \end{overpic}
    \end{minipage}
    \begin{minipage}{0.48\textwidth}
        \begin{overpic}[width=\textwidth]{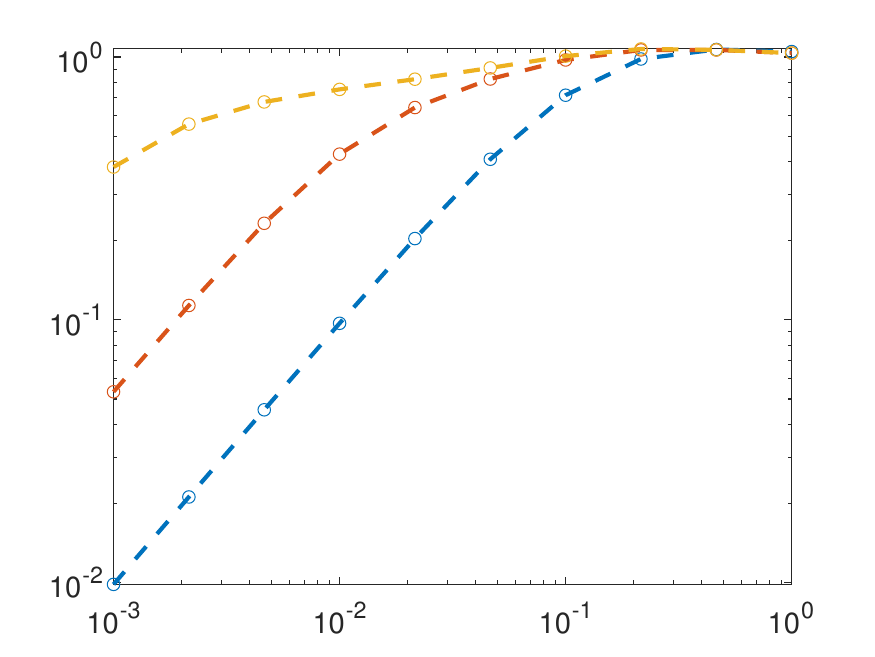}
        \put (50,-1) {$\displaystyle \epsilon$}
        \put (24,72) {$\displaystyle \text{Relative error in } \langle u_\lambda^{(\epsilon)}|\phi\rangle$}
      	\put (14,60) {\rotatebox{15} {$\displaystyle \lambda = 0.001$}}
        \put (17,41) {\rotatebox{46} {$\displaystyle \lambda = 0.01$}}
        \put (19,21) {\rotatebox{48} {$\displaystyle \lambda = 0.1$}}
        \end{overpic}
    \end{minipage}
    \caption{\label{fig:mult_op_poisson} Approximate generalized eigenfunctions $\smash{u_\lambda^{(\epsilon)}}$ with $\epsilon=0.01$ at $\lambda=0.1$ (blue) and $\lambda=0.01$ (red) are plotted in the left panel. The peaks correspond to locations of intersections of the plot in the left panel of \cref{fig:mult_op} with vertical lines at $\lambda$. Notice the ``ghost" of the generalized eigenfunction at $x=1$ appearing as $\lambda\rightarrow 0^+$, where the spectral measure $\rho_f$ has a jump discontinuity. The relative error in $\smash{\langle u_\lambda^{(\epsilon)}|\phi\rangle}$ is plotted against $\epsilon>0$ (right panel) for $\lambda=0.1$ (blue), $\lambda = 0.01$ (red), and $\lambda = 0.001$ (yellow) with $\phi=(1+x)\cos(\pi x)$.}
\end{figure}

Note that if we only assume that $f\in H= L^2(\Omega)$ (instead of $\Phi=C^\infty(\Omega)$), we can only conclude that convergence occurs for \textit{almost every} $\lambda\in (-1,1)$, i.e., at the Lebesgue points of the integrand in~\eqref{eqn:mult_poisson_action} (c.f.~\cref{thm:conv_ae}). However, convergence holds at every $\lambda\in\Omega\setminus\{0\}$ when $f\in C^\infty(\Omega)$ and can even be \textit{upgraded} as $\smash{u^{(\epsilon)}_\lambda}$ actually converges weakly in the sense of measures~\cite{billingsley1971weak}. Two wave packet approximations of generalized eigenfunctions of $P$, with $\lambda=0.1$ and $\lambda=0.01$, are shown in the left panel of~\cref{fig:mult_op_poisson}, computed using $\smash{f=(2+x)\cos(2\pi x)}$. The relative error in $\smash{\langle u_\lambda^{(\epsilon)}|\phi\rangle}$, with $\phi = (1+x)\cos(\pi x)$, is plotted against $10^{-3}\leq\epsilon\leq 1$ for $\lambda=0.1$, $0.01$, and $0.001$ in the right panel. The asymptotic convergence rate is $\mathcal{O}(\epsilon\log{\epsilon})$ and explicit bounds predict the larger observed errors when $\lambda$ is closer to the singularity of $\rho_f$ at $\lambda=0$ (see~\cref{sec:convergence}).

\subsubsection{Example 2: Differential operator}\label{sec:diff_poisson}

Next, we consider the differential operator $Au=-u''$ from~\cref{sec:diff_ex}. Computing $R_A(z)$ via Fourier transform, we find that
\begin{equation}\label{eqn:smoothed_diffopGEP}
u^{(\epsilon)}_\lambda=\frac{1}{2\pi i}\left[(A-\lambda+i\epsilon)^{-1}f-(A-\lambda-i\epsilon)^{-1}f\right] = \frac{1}{\pi}\int_{-\infty}^\infty\frac{\epsilon\hat f(k)}{(4\pi^2k^2-\lambda)^2+\epsilon^2}e^{2\pi i k x}\dd k.
\end{equation}
By taking an $L^2(\mathbb{R})$ inner product with $\phi\in S(\mathbb{R})$ and applying Fubini's theorem to interchange the order of integration, we see that
\begin{align*}
\langle u^{(\epsilon)}_\lambda | \phi\rangle &= \frac{1}{\pi}\int_{-\infty}^\infty\frac{\epsilon\hat f(k)\hat\phi(k)}{(4\pi^2 k^2-\lambda)^2+\epsilon^2}\dd k\\
&= \frac{1}{\pi}\int_0^\infty\frac{\epsilon\left[\hat f\left(\sqrt{y}/(2\pi)\right)\hat\phi\left(\sqrt{y}/(2\pi)\right) + \hat f\left(-\sqrt{y}/(2\pi)\right)\hat\phi\left(-\sqrt{y}/(2\pi)\right)\right]}{(y-\lambda)^2+\epsilon^2}\frac{\dd y}{4\pi\sqrt{y}}.
\end{align*}
Since $\hat f \hat\phi\in S(\mathbb{R})$ when $f,\phi\in S(\mathbb{R})$, the integrand on the right-hand side is smooth for each $\lambda>0$. Therefore, because the Poisson kernel is an approximation to the identity, the right-hand side converges pointwise as $\epsilon\rightarrow 0$ for each $\lambda>0$:
$$
\lim_{\epsilon\rightarrow 0}\,\langle u^{(\epsilon)}_\lambda|\phi\rangle = \frac{1}{4\pi\sqrt{\lambda}}\left[\hat f\left(\frac{\sqrt{\lambda}}{2\pi}\right)\hat\phi\left(\frac{\sqrt{\lambda}}{\pi}\right)+\hat f\left(\frac{-\sqrt{\lambda}}{2\pi}\right)\hat\phi\left(\frac{-\sqrt{\lambda}}{2\pi}\right)\right] = \rho_{f,\phi}(\lambda).
$$
Therefore, we again have convergence in the sense of distributions, i.e., 
\begin{equation}\label{eqn:wave_packet_approx}
u^{(\epsilon)}_\lambda(x) \rightarrow \frac{1}{4\pi\sqrt{\lambda}}\left[\hat f\left(\frac{\sqrt{\lambda}}{2\pi}\right)e^{i\sqrt{\lambda}x}+\hat f\left(\frac{-\sqrt{\lambda}}{2\pi}\right)e^{-i\sqrt{\lambda}x}\right]\quad\text{as}\quad \epsilon\rightarrow 0.
\end{equation}
As before, the wave packet converges to the projection of $f\in\Phi$ onto the generalized eigenspace associated with $\lambda$, which has multiplicity two in this case (if $\lambda\neq 0$). The factors of $\hat f(\pm\sqrt{\lambda}/(2\pi))/\sqrt{\lambda}$ make the computed generalized eigenfunctions orthogonal with respect to Lebesgue measure on the continuous spectrum rather than $\mu_f$, as in~\cref{sec:gefs}, and correspond to the appearance of $\rho_f(\lambda)$ in the convergence analysis in~\cref{sec:convergence}. Note that one could have also taken sines and cosines instead of complex exponentials. The generalized eigenfunction coordinates would then be given by sine and cosine transforms instead of the standard Fourier transform, but $u^{\epsilon}_\lambda$, the projection of $f$ onto the span of generalized eigenfunctions for $\lambda$, would not change.

\begin{figure}[tbp!]
    \centering
    \begin{minipage}{0.48\textwidth}
        \begin{overpic}[width=\textwidth]{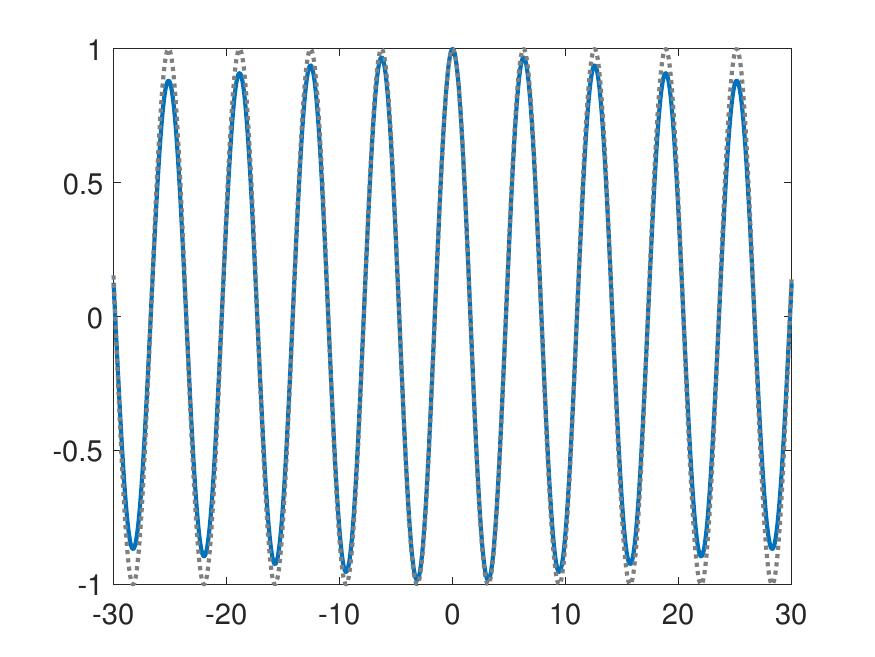}
        \put (50,-1) {$\displaystyle x$}
        \put (45,72) {$\displaystyle u_\lambda^{(\epsilon)}(x)$}
        \end{overpic}
    \end{minipage}
    \begin{minipage}{0.48\textwidth}
        \begin{overpic}[width=\textwidth]{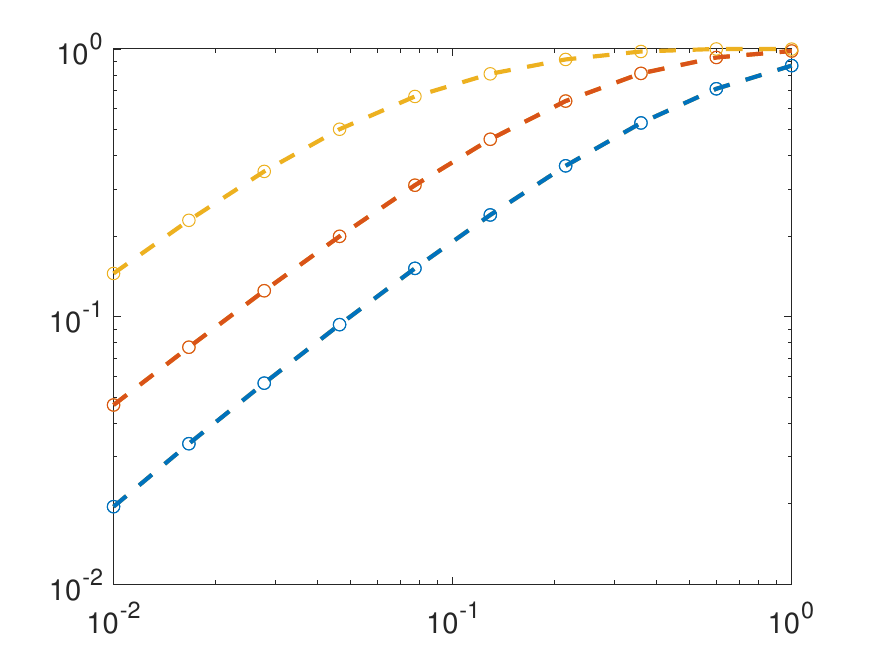}
        \put (50,-1) {$\displaystyle \epsilon$}
        \put (15,72) {$\displaystyle \text{Relative point-wise error in } u_\lambda^{(\epsilon)}$}
      	\put (15,48) {\rotatebox{36} {$\displaystyle \lambda = 0.1$}}
        \put (16,35) {\rotatebox{37} {$\displaystyle \lambda = 1$}}
        \put (16,23) {\rotatebox{38} {$\displaystyle \lambda = 10$}}
        \end{overpic}
    \end{minipage}
    \caption{\label{fig:diff_op_poisson} An approximate generalized eigenfunction $u_\lambda^{(\epsilon)}$ with $\epsilon=0.01$ at $\lambda=1$ (blue) is compared point-wise with the generalized eigenfunction $u_\lambda(x)=\cos(x)$ in the left panel (gray). The maximum pointwise relative error in $\smash{u_\lambda^{(\epsilon)}}$ over $x\in[-10,10]$ is plotted against $\epsilon$ (right panel) for $\lambda=10$ (blue), $\lambda = 1$ (red), and $\lambda = 0.1$ (yellow). The error increases as $\lambda$ approaches the endpoint of the spectrum $(\lambda=0)$ where $\rho_f$ is singular.}
\end{figure}

As in Example 1, note that if $\smash{f\in L^2(\mathbb{R})}$, convergence is only guaranteed for almost every $\lambda>0$, i.e., at the Lebesgue points of $\hat f(k)\hat \phi(k)$ (c.f.~\cref{thm:conv_ae})~\cite[p.~106]{stein2009real}. However, convergence can again be upgraded for $f\in S(\mathbb{R})$. Since $\hat f(k)e^{2\pi ikx}$ is itself a Schwartz function, convolution with the Poisson kernel after the change of variables converges pointwise in $x\in\mathbb{R}$, for each $\lambda>0$. \Cref{fig:diff_op_poisson} compares the generalized eigenfunction $\cos(\sqrt{\lambda}x)$ to the wave packet approximation computed with $f(x)=\exp(-x^2)/\sqrt{\pi}$ (left) and displays the maximum pointwise relative error of the approximation over the interval $[-10,10]$ as $\epsilon\rightarrow 0$ (right).

\subsection{Rational convolution kernels}\label{sec:rat_kern}

The wave packet approximations computed using the Poisson kernel converge slowly to generalized eigenfunctions as the smoothing parameter $\epsilon$ decreases (see~\cref{fig:mult_op_poisson,fig:diff_op_poisson}). For accurate wave packet approximations, one must take $\epsilon$ small and evaluate the resolvent $R_A(z)$ close to the spectrum of $A$. This leads to two numerical challenges. The associated linear systems typically require more computational degrees of freedom to solve accurately as $\epsilon\rightarrow 0^+$ and they also become increasingly ill-conditioned~\cite{colbrook2020computing,2023computingTI}.
To improve the convergence rates, we replace the Poisson kernel in~\cref{eqn:stone} with higher order kernels. By increasing the rate of convergence in $\epsilon$, we are able to compute accurate wave packet approximations without evaluating the resolvent close to the spectrum of $A$.

We consider a symmetric rational kernel in partial fraction form,
\begin{equation}\label{eqn:rat_kernel_form}
K(x)=\frac{1}{2\pi i}\sum_{j=1}^m\left[\frac{\alpha_j}{x-a_j}-\frac{\overline \alpha_j}{x-\overline a_j}\right],
\end{equation} 
where the poles $a_1,\ldots,a_m$ are distinct points in the upper half-plane and $\alpha_1,\ldots,\alpha_m$ are their associated complex residues. We say that $K$ is an $m$th order rational kernel if it satisfies the following normalization, zero moments, and decay conditions~\cite{colbrook2020computing}.

\begin{definition}[$m$th order kernel]
\label{def:mth_order_kernel}
Let $m$ be an integer $\geq 1$ and $K\in L^1(\mathbb{R})$. We say $K$ is an $m$th order kernel if it satisfies (i)-(iii):
\begin{itemize}
	\item[(i)] Normalized: $\int_{\mathbb{R}}K(x)dx=1$.
	\item[(ii)]  Zero moments: $K(x)x^j$ is integrable and $\int_{\mathbb{R}}K(x)x^jdx=0$ for $0<j<m$.
	\item[(iii)] Decay at $\pm\infty$: There is a constant $C_K$, independent of $x$, such that
	\begin{equation}
	\label{decay_bound}
	\left|K(x)\right|\leq \frac{C_K}{(1+\left|x\right|)^{m+1}}, \qquad x\in \mathbb{R}.
	\end{equation}
\end{itemize}
\end{definition}

The Poisson kernel in~\cref{eqn:stone} is a first-order rational kernel since it is integrable but does not have higher integrable or vanishing moments. For any distinct poles $a_1,\ldots,a_m$, we can compute residues $\alpha_1,\ldots,\alpha_m$ so that the kernel in~\eqref{eqn:rat_kernel_form} is an $m$th order kernel. This is done by solving the system~\cite{colbrook2020computing}
\begin{equation}\label{eqn:vandermonde_condition}
\begin{pmatrix}
1 & \dots & 1 \\
a_1 & \dots & a_m \\
\vdots & \ddots & \vdots \\
a_1^{m-1} &  \dots & a_m^{m-1}
\end{pmatrix}
\begin{pmatrix}
\alpha_1 \\ \alpha_2\\ \vdots \\ \alpha_m
\end{pmatrix}
=\begin{pmatrix}
1 \\ 0 \\ \vdots \\0
\end{pmatrix}.
\end{equation}
For $m\geq 2$, $m$th order rational kernels can provide higher orders of accuracy than the Poisson kernel as $\epsilon\rightarrow 0$ in~\cref{eqn:stone}. The functional calculus links~\cref{eqn:wave_packet} to the resolvent again, but in place of~\cref{eqn:stone}, we obtain
\begin{equation}\label{eqn:mth_stone}
u^{(\epsilon)}_\lambda = \sum_{k=1}^m \alpha_k R_A(\lambda+\epsilon a_k)v - \overline{\alpha}_k R_A(\lambda+\epsilon \overline{a}_k)v. 
\end{equation}
The approximate generalized eigenfunction, $\smash{u^{(\epsilon)}_\lambda}$, is again computed by numerically solving a sequence of (discretized) operator equations and taking linear combinations.

\begin{figure}
    \centering
    \begin{minipage}{0.48\textwidth}
        \begin{overpic}[width=\textwidth]{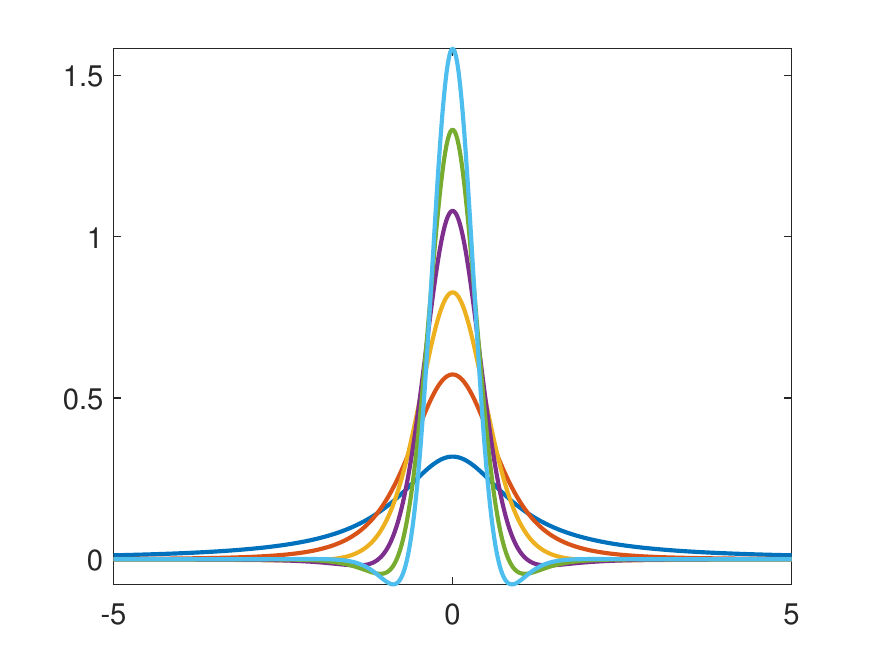}
        \put (50,-2) {$\displaystyle x$}
        \put (20,72) {$\displaystyle m\text{th order rational kernels}$}
        \put (62,20) {{$\displaystyle m = 1\text{ (Poisson)}$}}
        \put (62,28) {{$\displaystyle m = 2$}}
        \put (62,37) {{$\displaystyle m = 3$}}
        \put (62,46) {{$\displaystyle m = 4$}}
        \put (62,55) {{$\displaystyle m = 5$}}
        \put (62,64) {{$\displaystyle m = 6$}}
        \end{overpic}
    \end{minipage}
    \begin{minipage}{0.48\textwidth}
        \begin{overpic}[width=\textwidth]{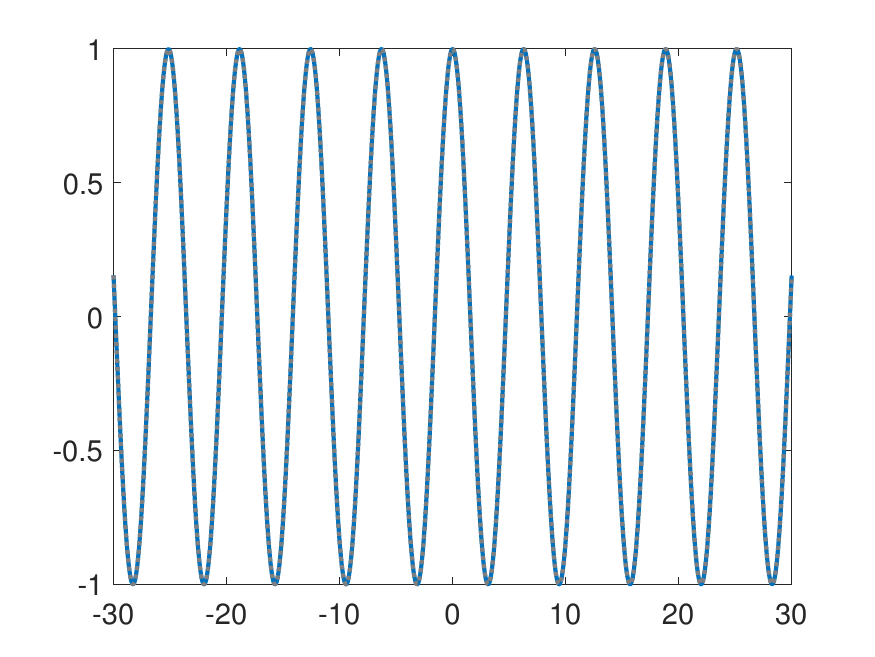}
        \put (50,-1) {$\displaystyle x$}
        \put (45,72) {$\displaystyle u_\lambda^{(\epsilon)}(x)$}
        \end{overpic}
    \end{minipage}
    \caption{\label{fig:mth_order_kernel} Rational kernels of order $m=1,\ldots,6$ with equispaced poles in $[-1+i,1+i]$ are plotted in the left panel. A higher order approximation ($m=3$) of the generalized eigenfunction in~\cref{fig:diff_op_poisson} improves the point-wise accuracy over a large interval with the same value of $\epsilon=0.01$.}
\end{figure}

\begin{figure}[tbp!]
    \centering
    \begin{minipage}{0.48\textwidth}
        \begin{overpic}[width=\textwidth]{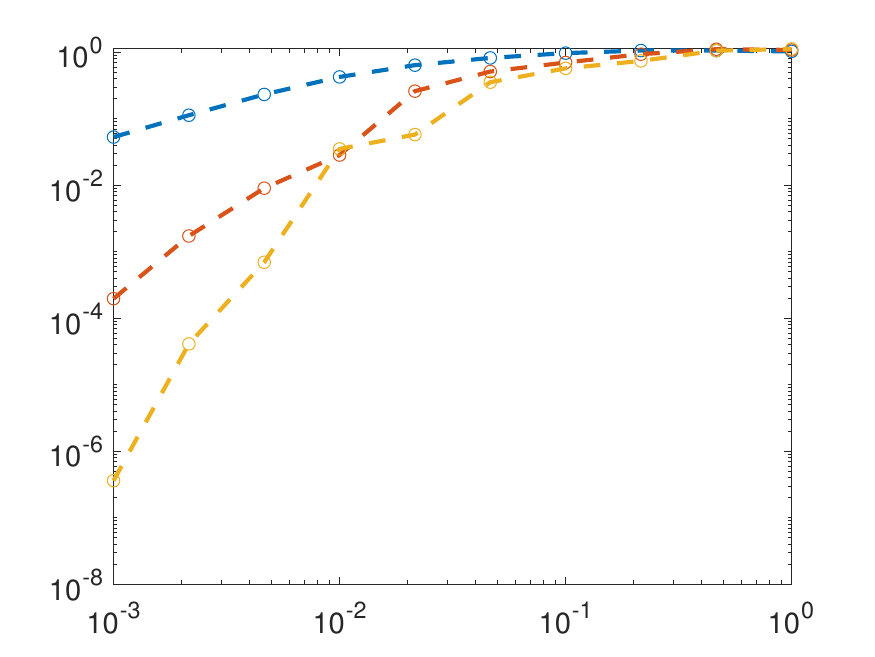}
        \put (50,-1) {$\displaystyle \epsilon$}
        \put (23,72) {$\displaystyle \text{Relative error in } \langle u_\lambda^{(\epsilon)}|\phi\rangle$}
      	\put (16,55) {\rotatebox{18} {$\displaystyle m = 1$}}
        \put (16,39) {\rotatebox{39} {$\displaystyle m = 3$}}
        \put (16,23) {\rotatebox{53} {$\displaystyle m = 5$}}
        \end{overpic}
    \end{minipage}
    \begin{minipage}{0.48\textwidth}
        \begin{overpic}[width=\textwidth]{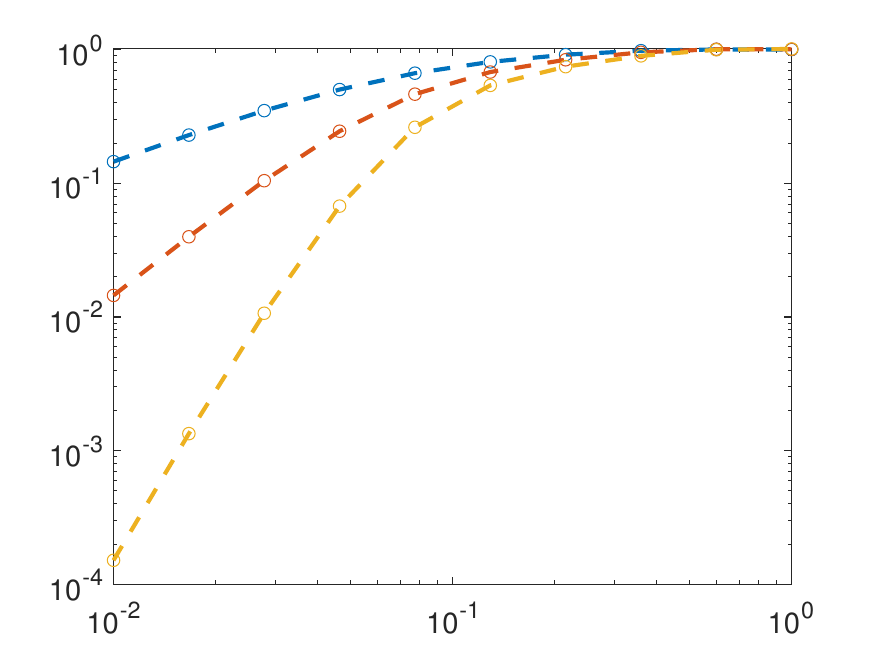}
        \put (50,-1) {$\displaystyle \epsilon$}
        \put (15,72) {$\displaystyle \text{Relative point-wise error in } u_\lambda^{(\epsilon)}$}
      	\put (17,60) {\rotatebox{18} {$\displaystyle m = 1$}}
        \put (16,47) {\rotatebox{37} {$\displaystyle m = 2$}}
        \put (16,24) {\rotatebox{55} {$\displaystyle m = 3$}}
        \end{overpic}
    \end{minipage}
    \caption{\label{fig:mth_order_accuracy} The relative error in $\smash{\langle u_\lambda^{(\epsilon)}|\phi\rangle}$ at $\lambda=0.01$, for the multiplication operator in~\cref{sec:mult_poisson} with $\phi=(1+x)\cos(\pi x)$, is plotted against $\epsilon$ (left panel) for kernel orders $m = 1$ (blue), $m = 3$ (red), and $m = 5$ (yellow). The maximum point-wise relative error in $\smash{\mu_\lambda^{(\epsilon)}}(x)$ for $x\in[-10,10]$, at $\lambda=0.1$, is plotted against $\epsilon$ (right panel) for the differentiation operator in~\cref{sec:diff_poisson} for $m=1$ (blue), $m=2$ (red), $m=3$ (yellow).}
\end{figure}

The first six rational kernels (of orders $m=1,\ldots,6$) with equispaced poles in $[-1+i,1+i]$ are plotted in the left panel of~\cref{fig:mth_order_kernel}. A wave packet approximation of the generalized eigenfunction in~\cref{fig:diff_op_poisson}, computed with a $3^{\rm rd}$-order rational kernel, is plotted in the right panel of~\cref{fig:mth_order_kernel}. The higher order approximation is point-wise accurate over a significantly larger interval than the Poisson wave packet approximation with the same value of $\epsilon=0.01$. Figure~\ref{fig:mth_order_accuracy} compares weak (left panel) and point-wise (right panel) convergence rates of higher order wave packet approximations for the multiplication and differential operators from~\cref{sec:mult_poisson} and~\cref{sec:diff_poisson}, respectively. Note the improved convergence rates, which allow us to compute accurate wave packet approximations without solving computationally expensive linear systems at smaller $\epsilon$.

\subsection{Convergence theory}\label{sec:convergence}

We now study the convergence and approximation error for the approximate generalized eigenfunction $\smash{u^{(\epsilon)}_\lambda}$ defined in~\cref{eqn:mth_stone}. First, we show that for any $\phi\in\Phi^*$, the generalized eigenfunction coordinates of $\phi$ converge at almost every point in the continuous spectrum, i.e., that $\smash{\langle u^{(\epsilon)}_\lambda,\phi\rangle\rightarrow\rho_f(\lambda)\langle u_\lambda | \phi\rangle}$ as $\epsilon\rightarrow 0$. The appearance of $\rho_f(\lambda)$ indicates that the wave packet approximations converge to a re-normalized set of generalized eigenfunctions on the absolutely continuous spectrum. The measure $\mu_f$ in the Parseval relation in~\cref{eqn:completeness} is replaced by Lebesgue measure and the density function $\rho_f(\lambda)$ is absorbed into the wave packet approximations to the generalized eigenfunctions. The proofs are in~\cref{sec:appendix}.

\begin{theorem}\label{thm:conv_ae}
Let $A:D(A)\rightarrow H$ be a selfadjoint operator on a rigged Hilbert space $\Phi\subset H\subset\Phi^*$, where $\Phi$ is a countably Hilbert nuclear space and $A\Phi\subset\Phi$. Let $K$ be an $m$th order rational kernel satisfying $(i)-(iii)$ in~\cref{def:mth_order_kernel} and~\cref{eqn:rat_kernel_form,eqn:vandermonde_condition}, and, given $f\in\Phi$, let $\smash{u_\lambda^{(\epsilon)}}$ be the associated wave packet approximation in~\cref{eqn:mth_stone}. If the spectrum of $A$ is absolutely continuous in the interval $\Omega=[a,b]$, then for each $\phi\in\Phi$,
$$
\lim_{\epsilon\rightarrow 0^+}\langle u_\lambda^{(\epsilon)},\phi\rangle = \rho_f(\lambda)\langle u_\lambda | \phi\rangle\qquad\text{for a.e.-}\lambda\in{\rm int}(\Omega),
$$
where, $u_\lambda\in\Phi^*$ is a generalized eigenfunction of $A$ satisfying~\cref{eqn:def_geneig}.
\end{theorem}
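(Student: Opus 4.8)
The plan is to reduce the statement to the classical theory of approximate identities applied to the scalar spectral measure $\mu_{f,\phi}$. First I would use the functional calculus to rewrite the resolvent combination in~\eqref{eqn:mth_stone} as the convolution~\eqref{eqn:wave_packet}, namely $u_\lambda^{(\epsilon)} = \int_\mathbb{R} K_\epsilon(\lambda-\tilde\lambda)\,\dd E(\tilde\lambda)f$ with $K_\epsilon(x)=K(x/\epsilon)/\epsilon$, and then pair with $\phi\in\Phi\subset H$. Since the Hilbert inner product commutes with the spectral integral,
\[
\langle u_\lambda^{(\epsilon)},\phi\rangle = \int_\mathbb{R} K_\epsilon(\lambda-\tilde\lambda)\,\dd\mu_{f,\phi}(\tilde\lambda) = (K_\epsilon * \mu_{f,\phi})(\lambda),
\]
where $\mu_{f,\phi}(S)=\langle E(S)f,\phi\rangle$ is a complex measure of finite total variation (indeed $|\mu_{f,\phi}|(\mathbb{R})\le\|f\|\,\|\phi\|$ by Cauchy--Schwarz applied to the orthogonal projections $E(S_i)$). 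The goal is thus to show $(K_\epsilon*\mu_{f,\phi})(\lambda)\to\rho_{f,\phi}(\lambda)$ for a.e.\ $\lambda\in\mathrm{int}(\Omega)$ and to identify $\rho_{f,\phi}(\lambda)$ with $\rho_f(\lambda)\langle u_\lambda|\phi\rangle$.

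Next I would split $\mu_{f,\phi}=\mu_{f,\phi}|_{\Omega}+\mu_{f,\phi}|_{\Omega^c}$. For the exterior contribution, fix $\lambda\in\mathrm{int}(\Omega)$ and set $d=\mathrm{dist}(\lambda,\partial\Omega)>0$; the decay condition (iii) of~\cref{def:mth_order_kernel} gives $|K_\epsilon(\lambda-\tilde\lambda)|\le C_K\epsilon^m/(\epsilon+|\lambda-\tilde\lambda|)^{m+1}\le C_K\epsilon^m/d^{m+1}$ whenever $\tilde\lambda\notin\Omega$, so that part is bounded by $C_K\epsilon^m d^{-(m+1)}|\mu_{f,\phi}|(\mathbb{R})$ and vanishes as $\epsilon\to0$. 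On $\Omega$ the hypothesis that the spectrum is absolutely continuous means $E|_\Omega$ is absolutely continuous, so $\dd\mu_{f,\phi}=\rho_{f,\phi}\,\dd\lambda$ on $\Omega$ with $\rho_{f,\phi}\in L^1(\Omega)$, and the interior contribution is an honest convolution $K_\epsilon*(\rho_{f,\phi}\chi_\Omega)$. The key technical step is the a.e.\ convergence of this approximate identity: condition (iii) guarantees that the radially decreasing majorant $\sup_{|y|\ge|x|}|K(y)|$ is integrable (this is where $m\ge1$ enters), so the classical Lebesgue-point theorem for convolution with a dilated integrable kernel applies and yields $(K_\epsilon*(\rho_{f,\phi}\chi_\Omega))(\lambda)\to\rho_{f,\phi}(\lambda)$ at every Lebesgue point of $\rho_{f,\phi}$ in $\mathrm{int}(\Omega)$, hence for a.e.\ $\lambda$.

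It remains to identify the limit. By the Gel'fand--Shilov construction recalled in~\eqref{eqn:geig_action}, $\langle u_\lambda|\phi\rangle=\dd\mu_{f,\phi}/\dd\mu_f(\lambda)$ and $\mu_{f,\phi}\ll\mu_f$, while on $\Omega$ the scalar measure satisfies $\mu_f=\rho_f\,\dd\lambda$. The Radon--Nikodym chain rule then gives, for a.e.\ $\lambda\in\Omega$,
\[
\rho_{f,\phi}(\lambda)=\frac{\dd\mu_{f,\phi}}{\dd\lambda}(\lambda)=\frac{\dd\mu_{f,\phi}}{\dd\mu_f}(\lambda)\,\frac{\dd\mu_f}{\dd\lambda}(\lambda)=\langle u_\lambda|\phi\rangle\,\rho_f(\lambda),
\]
the identity reducing to $0=0$ on the null set where $\rho_f$ vanishes (since $\mu_{f,\phi}\ll\mu_f$ forces $\rho_{f,\phi}=0$ there). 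Combining this with the two previous steps yields $\langle u_\lambda^{(\epsilon)},\phi\rangle\to\rho_f(\lambda)\langle u_\lambda|\phi\rangle$ for a.e.\ $\lambda\in\mathrm{int}(\Omega)$, as claimed; note that fixing $\phi$ first and then asserting a.e.-$\lambda$ convergence matches the quantifier order of the statement, so the $\phi$-dependent null sets cause no difficulty.

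I expect the main obstacle to be the a.e.\ convergence of the \emph{complex, non-sign-definite} rational kernel acting as an approximate identity. Unlike the Poisson kernel, $K$ is not monotone and is controlled only through the normalization and decay conditions of~\cref{def:mth_order_kernel}; the crucial observation is that (iii) supplies an integrable radially decreasing majorant, which is exactly what makes the Lebesgue-point differentiation argument go through for the signed convolution. The measure-splitting and Radon--Nikodym chain-rule steps are then routine once the finite total variation of $\mu_{f,\phi}$ and the absolute continuity of $\mu_f$ on $\Omega$ are in hand.
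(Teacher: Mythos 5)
Your proposal is correct and follows essentially the same route as the paper: the paper's proof also reduces $\langle u_\lambda^{(\epsilon)},\phi\rangle$ to $[K_\epsilon * \mu_{f,\phi}](\lambda)$, splits the convolution into the contribution on $\Omega$ (handled by the Stein--Weiss Lebesgue-point theorem for approximate identities with integrable decreasing majorants, exactly your condition-(iii) observation) and the contribution on $\mathbb{R}\setminus\Omega$ (killed by the same $\epsilon^m$ decay bound), and then identifies the limit via the Radon--Nikodym chain rule $\rho_{f,\phi}=\rho_f\,\langle u_\lambda|\phi\rangle$. Your additional remarks on the finite total variation of $\mu_{f,\phi}$, the quantifier order in $\phi$, and the null set where $\rho_f$ vanishes are finer points the paper leaves implicit, but they do not change the argument.
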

\begin{proof}
See~\cref{sec:appendix}.
\end{proof}

In particular, the convergence in~\cref{thm:conv_ae} holds at every point of continuity of $\rho_{f,\phi}$ and, more generally, on the Lebesgue set of $\rho_{f,\phi}$ (see~\cref{sec:appendix}). For practical numerical computation, understanding the approximation error and convergence rate for specific test functions in $\Phi$ is crucial. The rational convolution kernels achieve higher orders of convergence in regions of the absolutely continuous spectrum where $\rho_{f,\phi}$ is locally smooth. We use H\"older continuity in this context to establish the convergence rates. The space of functions with $n\geq 0$ continuous derivatives on a closed interval $\Omega = [a,b]$ ($a<b$) and with an $\alpha$-H\"older continuous $n$th derivative is denoted by $C^{n,\alpha}(\Omega)$.

\begin{theorem}[Weak-$^*$ convergence rates]\label{thm:conv_rates}
Suppose that the hypotheses of~\cref{thm:conv_ae} hold and, additionally, suppose that the restriction of the Radon-Nikodym derivative $\rho_{f,\phi}$ to the interval $\Omega=[a,b]$ is in $C^{n,\alpha}(\Omega)$. Then for any fixed $\lambda\in{\rm int}(\Omega)$,
$$
\left|\langle u_\lambda^{(\epsilon)},\phi\rangle - \rho_f(\lambda)\langle u_\lambda | \phi\rangle\right| = \mathcal{O}(\epsilon^{n+\alpha}) + \mathcal{O}(\epsilon^m\log(\epsilon)),\qquad\text{as}\qquad\epsilon\rightarrow 0.
$$
\end{theorem}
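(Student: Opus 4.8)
The plan is to reduce the weak-$^*$ error to a one-dimensional convolution estimate and then exploit the moment structure of $K$ through a Taylor expansion. As in the proof of \cref{thm:conv_ae}, I would first use the functional calculus to write $\langle u_\lambda^{(\epsilon)},\phi\rangle = \int_\mathbb{R} K_\epsilon(\lambda-\tilde\lambda)\dd\mu_{f,\phi}(\tilde\lambda)$, where $K_\epsilon(x)=K(x/\epsilon)/\epsilon$ and $\mu_{f,\phi}(S)=\langle E(S)f,\phi\rangle$ is a complex measure of finite total variation bounded by $\|f\|\,\|\phi\|$. The target value is $\rho_f(\lambda)\langle u_\lambda|\phi\rangle = \rho_{f,\phi}(\lambda)$, by the Radon--Nikodym chain rule already used for \cref{thm:conv_ae}; since $\rho_{f,\phi}\in C^{n,\alpha}(\Omega)$ is continuous, this value is well defined at the fixed $\lambda$. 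Because $\lambda\in{\rm int}(\Omega)$, I set $\delta=\min(\lambda-a,b-\lambda)>0$ to quantify the positive margin to the endpoints.

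I would then split $\mathbb{R}=\Omega\cup(\mathbb{R}\setminus\Omega)$. On $\Omega$ the measure is absolutely continuous with density $\rho_{f,\phi}$, while the tail over $\mathbb{R}\setminus\Omega$ is controlled by the decay bound (iii), which gives $|K_\epsilon(x)|\leq C_K\epsilon^m/(\epsilon+|x|)^{m+1}$. For $|\lambda-\tilde\lambda|\geq\delta$ the crude estimate $|K_\epsilon(\lambda-\tilde\lambda)|\leq C_K\epsilon^m/\delta^{m+1}$ integrated against the finite measure $|\mu_{f,\phi}|$ yields a tail of size $\mathcal{O}(\epsilon^m)$; the same decay bound, after rescaling $y=(\tilde\lambda-\lambda)/\epsilon$, gives $\int_{\mathbb{R}\setminus\Omega}|K_\epsilon(\lambda-\tilde\lambda)|\dd\tilde\lambda=\mathcal{O}(\epsilon^m)$, so normalization (i) yields $\int_\Omega K_\epsilon(\lambda-\tilde\lambda)\dd\tilde\lambda=1-\mathcal{O}(\epsilon^m)$.

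The crux is the main term $\int_\Omega K_\epsilon(\lambda-\tilde\lambda)[\rho_{f,\phi}(\tilde\lambda)-\rho_{f,\phi}(\lambda)]\dd\tilde\lambda$. I would Taylor-expand $\rho_{f,\phi}$ about $\lambda$ to order $n$, with a Hölder remainder $|R(\tilde\lambda)|\leq C|\tilde\lambda-\lambda|^{n+\alpha}$ on $\Omega$ coming from the $\alpha$-Hölder continuity of the $n$th derivative. For each monomial $(\tilde\lambda-\lambda)^j$ with $1\leq j<m$, extending the integral to $\mathbb{R}$ and invoking the zero-moment condition (ii) annihilates the full-line integral, leaving only a tail of size $\mathcal{O}(\epsilon^m)$ (the same decay argument with an extra factor $|\tilde\lambda-\lambda|^j$, still convergent since $j<m$). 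For monomials with $j\geq m$ (relevant only if $n\geq m$) and for the remainder $R$, the rescaled integrand $|K(y)||y|^{s}$ with $s\in\{m,m+1,\dots\}\cup\{n+\alpha\}$ behaves like $|y|^{s-m-1}$ at infinity and is integrated over $|y|\leq L/\epsilon$; this produces $\mathcal{O}(\epsilon^m)$ when $s>m$ and exactly $\mathcal{O}(\epsilon^m\log\epsilon)$ at the borderline $s=m$. The remainder bound is therefore $\mathcal{O}(\epsilon^{n+\alpha})$ when $n+\alpha<m$ (the rescaled integral converges and can be extended to $\mathbb{R}$), and $\mathcal{O}(\epsilon^m\log\epsilon)$ or $\mathcal{O}(\epsilon^m)$ otherwise. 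Collecting all contributions yields the stated $\mathcal{O}(\epsilon^{n+\alpha})+\mathcal{O}(\epsilon^m\log\epsilon)$.

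The main obstacle is the bookkeeping at the boundary between the regimes $n+\alpha<m$ and $n+\alpha\geq m$: because $\rho_{f,\phi}$ is only smooth on $\Omega$ and the kernel's moments vanish only up to order $m$, one cannot simply convolve on the whole line, and the logarithm is a genuine borderline effect that appears precisely when a power of $|y|$ cancels the $|y|^{-(m+1)}$ tail of $K$ to leave $|y|^{-1}$. Carrying the truncation to $\Omega$, the positive margin $\delta$, and the finite total variation of $\mu_{f,\phi}$ through every step is what keeps the estimate clean and delivers the stated rate uniformly.
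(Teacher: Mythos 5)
Your proposal is correct and takes essentially the same route as the paper: the paper reduces \cref{thm:conv_rates} to a general convolution estimate for Borel measures (\cref{lem:borel_meas_approx}), proved by exactly your decomposition --- the identity $\langle u_\lambda^{(\epsilon)},\phi\rangle = [K_\epsilon * \mu_{f,\phi}](\lambda)$ plus the chain rule $\rho_f(\lambda)\langle u_\lambda|\phi\rangle=\rho_{f,\phi}(\lambda)$, a near/far splitting of the convolution, the kernel decay bound (iii) for the tail, and a Taylor expansion with the vanishing-moment conditions for the local term. The only difference is that the paper outsources the Taylor bookkeeping (including the borderline $\epsilon^m\log\epsilon$ case) to Theorem~5.2 of Colbrook et al., whereas you sketch it explicitly; the content is the same.
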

\begin{proof}
See~\cref{sec:appendix}.
\end{proof}

The improved convergence rate for $m\geq 2$ as $\epsilon\rightarrow 0$ allows us to accurately approximate generalized eigenfunctions without taking $\epsilon>0$ very small, i.e., without evaluating $R_A(z)$ at points very close to the spectrum of $A$. This improves the algorithmic efficiency and numerical stability of the wave packet approximation scheme.

As examples~\cref{sec:mult_poisson,sec:diff_poisson} illustrate, the weak limit in~\cref{eqn:def_geneig} sometimes holds in a finer topology, and the generalized eigenfunctions may even be identified with elements of a classical function space. This phenomenon is typical of examples involving elliptic partial differential operators because locally smooth generalized eigenfunctions (and smooth wave packet approximations) are implied by regularity theory. Our next result provides, for locally continuous and locally bounded generalized eigenfunctions, sufficient conditions for the wave packet approximations to converge locally uniformly. 

Suppose that $A$ acts on a space of square-integrable functions $L^2(S,\nu)$, where $S$ is a metric space with strictly positive measure $\nu$ (recall that $\nu$ is strictly positive if every open set is $\nu$-measurable and every non-empty open set has nonzero $\nu$-measure). We call $K\subset S$ a compact domain if it is the closure of a nonempty open connected set with bounded diameter. If $\Phi|_K$ denotes the set of $\phi\in\Phi$ with compact support in $K$ and $\Phi|_K$ is dense in $L^2(K,\nu)\subset L^2(S,\nu)$, then $\Phi|_K$ separates points in $C(K)$: if $f\in C(K)$ and $\int_K \phi\,f\,\dd\nu=0$ for every $\phi\in\Phi|_K$, then $f=0$. In this case, we say that $\psi\in\Phi^*$ is continuous on $K$ if there is a continuous function $\psi|_k\in C(K)$ such that $\langle \psi | \phi \rangle = \int_K \phi\,\psi|_k\,\dd\nu$ for all $\phi\in \Phi|_K$. Given a closed interval $\Omega\in\mathbb{R}$, we say that $u:K\times\Omega \rightarrow \mathbb{C}$ is $K$-uniformly continuous on $\Omega$ if for any $\delta>0$ there is a $\delta'>0$ such that $|\lambda-\lambda'|<\delta'$ implies $|u(x,\lambda)-u(x,\lambda')|<\delta$ for all $x\in K$. Similarly, $u$ is $K$-uniformly $(n,\alpha)$-H\"older continuous on $\Omega$ if $\smash{\|u(x,\cdot)\|_{C^{n,\alpha}(\Omega)}}$ is uniformly bounded on $K$.

\begin{theorem}[Uniform convergence on compact sets]\label{thm:conv_point}
Let the hypotheses of~\cref{thm:conv_ae} hold with $H=L^2(S,\nu)$ for metric space $S$ and strictly positive $\nu$. Given compact domain $K\subset S$, suppose $\Phi|_K$ is dense in $L^2(K,\nu)$ and $u_\lambda$ is continuous on $K$ for all $\lambda\in\mathbb{R}$ with $\sup_{\lambda\in\mathbb{R}}\|u_\lambda|_K\|_{C(K)}<\infty$. Given $\lambda_*\in{\rm int}(\Omega)$ and $\delta>0$ such that $I=[\lambda_*-\delta,\lambda_*+\delta]\in{\rm int}(\Omega)$, if $(x,\lambda) \rightarrow \rho_f(\lambda) u_\lambda|_k(x)$ is $K$-uniformly continuous on $I$ and $\smash{u_{\lambda_*}^{(\epsilon)}|_K\in C(K)}$,
$$
\lim_{\epsilon\rightarrow 0^+} \sup_{x\in K}\lvert u_{\lambda_*}^{(\epsilon)}(x) - \rho_f(\lambda_*)u_{\lambda_*}(x)\rvert = 0.
$$
Moreover, if $(x,\lambda)\rightarrow \rho_f(\lambda)u_\lambda|_K(x)$ is $K$-uniformly $(n,\alpha)$-H\"older continuous on $I$ with nonnegative integer $n$ and $\alpha\in[0,1]$, then
$$
\sup_{x\in K}\lvert u_{\lambda_*}^{(\epsilon)}(x) - \rho_f(\lambda_*)u_{\lambda_*}(x)\rvert = \mathcal{O}(\epsilon^{n+\alpha}) + \mathcal{O}(\epsilon^m\log(\epsilon)),\qquad\text{as}\qquad\epsilon\rightarrow 0^+.
$$
\end{theorem}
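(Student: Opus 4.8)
The plan is to upgrade the pointwise (in $\lambda$) convergence of \cref{thm:conv_ae} to uniform convergence in $x\in K$, leveraging the duality identification of $u_\lambda$ and $u_\lambda^{(\epsilon)}$ with genuine functions on $K$. First I would write the action of the wave packet against a test function $\phi\in\Phi|_K$ explicitly: combining \cref{eqn:mth_stone} with the functional calculus and the Radon–Nikodym representation \cref{eqn:geig_action}, one obtains
$$
\langle u_\lambda^{(\epsilon)},\phi\rangle = \int_\Omega K_\epsilon(\lambda-\tilde\lambda)\,\rho_{f,\phi}(\tilde\lambda)\dd\tilde\lambda = (K_\epsilon * \rho_{f,\phi})(\lambda),
$$
so that the difference $\langle u_{\lambda_*}^{(\epsilon)},\phi\rangle - \rho_f(\lambda_*)\langle u_{\lambda_*}|\phi\rangle$ is a smoothing-kernel approximation error. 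Since by hypothesis both $u_{\lambda_*}^{(\epsilon)}|_K$ and $u_{\lambda_*}|_K$ are continuous functions on $K$ and $\Phi|_K$ separates points in $C(K)$, it suffices to control this quantity by a bound that is \emph{uniform over the test functions $\phi=\phi_x$ that realize pointwise evaluation at $x\in K$} — heuristically, $\rho_{f,\phi_x}(\lambda)=\rho_f(\lambda)\,\overline{u_\lambda|_K(x)}$, the $K$-uniform regularity hypothesis being exactly the statement that these densities are uniformly smooth in $x$.

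The key step is to make the previous heuristic rigorous without assuming that a delta-type test function lies in $\Phi|_K$. I would approximate the evaluation functional at $x\in K$ by an approximate identity $\{\phi_{x,\eta}\}\subset\Phi|_K$ concentrating at $x$ (available since $\Phi|_K$ is dense in $L^2(K,\nu)$ and $\nu$ is strictly positive), apply the convolution estimate from the proof of \cref{thm:conv_rates} to each $\phi_{x,\eta}$, and then pass $\eta\to0$ using the continuity of $u_{\lambda_*}^{(\epsilon)}|_K$ and $u_{\lambda_*}|_K$. The crucial point is that the $K$-uniform continuity of $(x,\lambda)\mapsto\rho_f(\lambda)u_\lambda|_K(x)$ on $I$ guarantees that the error estimate produced by the smoothing kernel is bounded uniformly in $x\in K$: in the first-order conclusion the modulus of continuity of $\tilde\lambda\mapsto\rho_f(\tilde\lambda)u_{\tilde\lambda}|_K(x)$ can be taken independent of $x$, and in the H\"older case the seminorm $\|\rho_f(\cdot)u_\cdot|_K(x)\|_{C^{n,\alpha}(I)}$ is by hypothesis uniformly bounded over $K$.

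With this uniform control in hand, the first (qualitative) conclusion follows by splitting the convolution error over $I$ and its complement: the contribution from $\tilde\lambda\in I$ is handled by the $K$-uniform modulus of continuity together with normalization $\int K=1$, while the tail $\tilde\lambda\notin I$ is controlled by the decay condition (iii) in \cref{def:mth_order_kernel}, which forces $\int_{|\lambda-\tilde\lambda|>\delta}|K_\epsilon(\lambda-\tilde\lambda)|\dd\tilde\lambda = \mathcal O(\epsilon^m)$ uniformly, using $\sup_\lambda\|u_\lambda|_K\|_{C(K)}<\infty$. For the quantitative H\"older conclusion I would Taylor-expand $\rho_f(\tilde\lambda)u_{\tilde\lambda}|_K(x)$ to order $n$ about $\lambda_*$, annihilate the polynomial terms of degree $1,\dots,m-1$ using the vanishing-moment condition (ii), and bound the $\alpha$-H\"older remainder against $\int |K_\epsilon(s)|\,|s|^{n+\alpha}\dd s=\mathcal O(\epsilon^{n+\alpha})$ together with the usual $\mathcal O(\epsilon^m\log\epsilon)$ term from the interplay of the $m$th moment and the logarithmically divergent tail integral — exactly as in \cref{thm:conv_rates}, but now every constant is uniform in $x\in K$.

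The main obstacle I anticipate is the passage from the weak-$^*$ statement to the genuinely pointwise/uniform statement: one must argue that the approximate-identity limit $\eta\to0$ interchanges correctly with the $\epsilon$-limit and the $\sup_{x\in K}$, and that the "test functions realizing point evaluation" deliver the density $\rho_f(\lambda)u_\lambda|_K(x)$ with the claimed $K$-uniform regularity. This is where the separation property of $\Phi|_K$ in $C(K)$, the strict positivity of $\nu$, and the assumed continuity of $u_{\lambda_*}^{(\epsilon)}|_K$ all enter essentially; the convolution/moment estimates themselves are routine once the $x$-uniformity of the relevant moduli and seminorms has been secured.
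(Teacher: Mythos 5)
Your strategy is genuinely different from the paper's, and it can be made to work. The paper never tests against concentrating test functions: it lifts the wave-packet identity into $C(K)$ as a vector-valued (Pettis) integral, $u_{\lambda_*}^{(\epsilon)}|_K=\int_{\mathbb{R}}K_\epsilon(\lambda_*-\lambda)\,u_\lambda|_K\,\dd\mu_f(\lambda)$, using \cref{lem:wp_expansion} and \cref{lem:pettis_integrability_criteria}, with the density of $\Phi|_K$ in $L^2(K,\nu)$ and the strict positivity of $\nu$ upgrading $\nu$-a.e.\ equality to equality at every point of $K$. It then applies the point-evaluation functionals $\delta_x\in C(K)^*$ inside the Pettis integral to get, for \emph{every} $x\in K$, the scalar identity $u_{\lambda_*}^{(\epsilon)}(x)=\int_{\mathbb{R}}K_\epsilon(\lambda_*-\lambda)\,u_\lambda(x)\,\dd\mu_f(\lambda)$, and feeds each $x$-slice into \cref{lem:borel_meas_approx}, the $K$-uniform hypotheses making the resulting bounds uniform in $x$. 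Your mollifier/duality route avoids vector-valued integration entirely, which is more elementary, at the price of the double-limit bookkeeping ($\eta\to 0$ at fixed $\epsilon$, then $\epsilon\to 0$) that you correctly identify as the delicate point; both arguments ultimately reduce to the same kernel estimates with $x$-uniform constants.

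There is, however, one concrete gap in your sketch, at the tail term, and it is exactly where the plan ``apply the estimate of \cref{thm:conv_rates} to each $\phi_{x,\eta}$'' breaks down. First, the generic tail bound in \cref{lem:borel_meas_approx} is $\epsilon^m\|\mu\|/(\epsilon+\delta)^{m+1}$ with $\|\mu\|$ the total variation of $\mu_{f,\phi_{x,\eta}}$, and the only generic bound on that total variation is $\|f\|\,\|\phi_{x,\eta}\|_{L^2}$, which blows up as $\eta\to 0$ since the mollifiers concentrate; so this estimate is not uniform in $\eta$ and cannot survive the limit. Second, your displayed tail estimate $\int_{|\lambda-\tilde\lambda|>\delta}|K_\epsilon(\lambda-\tilde\lambda)|\,\dd\tilde\lambda=\mathcal{O}(\epsilon^m)$ integrates against Lebesgue measure, but outside $\Omega$ the spectrum of $A$ need not be absolutely continuous, so $\mu_{f,\phi_{x,\eta}}$ has no Lebesgue density there. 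The repair uses the ingredients you name but must be set up correctly: by \cref{eqn:geig_action}, $\dd\mu_{f,\phi}(\lambda)=\langle u_\lambda|\phi\rangle\,\dd\mu_f(\lambda)$ on \emph{all} of $\mathbb{R}$, and the hypothesis that $u_\lambda$ is continuous on $K$ for all $\lambda\in\mathbb{R}$ (not merely $\lambda\in\Omega$) gives $|\langle u_\lambda|\phi_{x,\eta}\rangle|\leq \sup_{\lambda}\|u_\lambda|_K\|_{C(K)}\,\|\phi_{x,\eta}\|_{L^1(\nu)}$. Provided you build the mollifiers (normalized indicators approximated in $L^2$ by elements of $\Phi|_K$) so that their $L^1$ norms are uniformly bounded, the tail becomes $\bigl|\int_{\mathbb{R}\setminus I}K_\epsilon(\lambda_*-\lambda)\langle u_\lambda|\phi_{x,\eta}\rangle\,\dd\mu_f(\lambda)\bigr|\leq \sup_{|s|>\delta}|K_\epsilon(s)|\cdot\mu_f(\mathbb{R})\cdot\sup_\lambda\|u_\lambda|_K\|_{C(K)}\cdot C=\mathcal{O}(\epsilon^m)$, uniformly in $(x,\eta)$, and the main term over $I$ is controlled by the $K$-uniform modulus or H\"older seminorm times $\|\phi_{x,\eta}\|_{L^1}$. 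With every estimate uniform in $\eta$, your passage $\eta\to0$ at fixed $\epsilon$ (via continuity of $u_{\lambda_*}^{(\epsilon)}|_K$ and $u_{\lambda_*}|_K$) is legitimate and yields the stated rates. This is, in disguise, the same mechanism the paper uses: in its pointwise identity the relevant measure is $u_\lambda(x)\,\dd\mu_f(\lambda)$, whose total variation is bounded by $\sup_\lambda\|u_\lambda|_K\|_{C(K)}\,\mu_f(\mathbb{R})$ uniformly in $x$.
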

\begin{proof}
See~\cref{sec:appendix}.
\end{proof}

Note that~\cref{thm:conv_point} may be applied to both $L^2(\mathbb{R}^d)$ and $\ell^2$, the space of square-summable sequences, as well as weighted variants of these spaces. For example, in the case of $\ell^2$, $S=\mathbb{N}$ is equipped with the open sets of the discrete topology so that the counting measure is strictly positive. For both $L^2(\mathbb{R}^d)$ and $\ell^2$, the standard rigging employs nuclear spaces ($\mathcal{S}$ and the space of rapidly decaying sequences, respectively) that satisfy the density requirement in $L^2(K,\nu)$ for compact domains $K$. Consequently,~\cref{thm:conv_point} provides useful criteria for convergent wave packet approximations to scattering states of discrete Hamiltonian operators on lattices in condensed matter physics~\cite{2023computingTI}.

Even when the generalized eigenfunctions are not elements of classical function spaces (at least, locally), the wave packet approximations may still converge in a stronger topology than that of $\Phi^*$. For example, the wave packet approximations to the multiplication operator's Dirac delta eigenfunctions in~\cref{sec:mult_poisson} converge weakly in the sense of measures. Below,~\cref{thm:conv_abs} provides a general framework for understanding how and when the wave packet's convergence may be ``upgraded" to a stronger topology. 

A topological space is called a Suslin space if it is the image of a Polish space (a separable, completely metrizable topological space) under a continuous bijection~\cite{thomas1975integration}. As Thomas notes~\cite{thomas1975integration}, many common separable, locally convex topological vector spaces (TVS) used in applied analysis are Suslin spaces. For example, separable Frechet, Banach, and Hilbert spaces are Suslin, as are their duals, and the usual spaces of test functions (e.g. Schwartz, smooth compactly supported, and holomorphic spaces of test functions) and the associated spaces of distributions are Suslin. We state~\cref{thm:conv_abs} in generality and then describe its application to the concrete examples of this paper.

If $F$ is a locally convex Suslin TVS and $\Omega\subset\mathbb{R}$, we say that a vector-valued map $u:\Omega\rightarrow F$ has bounded image in $F$ if any neighborhood of the origin can be scaled to include the image~\cite[p.~8]{rudin}. Equivalently, $\sup_{\lambda\in\Omega}|\langle \psi | u(\lambda)\rangle|<\infty$ for every linear functional $\psi\in F^*$\cite[Thm~3.18]{rudin}. If $F$ is a Frechet space, this is equivalent to $\sup_{\lambda\in\Omega}|u(\lambda)|_p<\infty$ for any countable family of seminorms $\{|\cdot|_p\}_{p=1}^\infty$ that separates points in $F$~\cite[Thm~1.37]{rudin}. Consequently, in a Banach space one need only check that the image is norm bounded.

\begin{theorem}[Convergence in Suslin spaces]\label{thm:conv_abs}
Let the hypotheses of~\cref{thm:conv_ae} hold and, given a locally convex Suslin TVS, $F$, suppose that $\lambda\rightarrow u_\lambda$ has bounded image in $F$. Additionally, suppose there is a continuous embedding $i:\Phi\rightarrow F^*$ such that $\langle i(\phi) | f \rangle = \langle f | \phi\rangle$ when $f\in F \bigcap \Phi^*$, $\phi\in\Phi$, and that $i(\Phi)\subset F^*$ separates points in $F$. Given any $\lambda_*\in{\rm int}(\Omega)$ and $\psi\in F^*$, if $\smash{u_{\lambda_*}^{(\epsilon)}\in F}$ and $\rho_f(\lambda)\langle\psi | u_\lambda\rangle$ is continuous at $\lambda_*$, then
$$
\lim_{\epsilon\rightarrow 0^+}\langle \psi | u_{\lambda_*}^{(\epsilon)}\rangle = \rho_f(\lambda_*)\langle \psi | u_{\lambda_*}\rangle.
$$
Moreover, given $\delta>0$ such that $I=[\lambda_*-\delta,\lambda_*+\delta]\in{\rm int}(\Omega)$, if $\lambda\rightarrow \rho_f(\lambda)\langle \psi | u_{\lambda}\rangle$ is $(n,\alpha)$-H\"older continuous on $I$ with nonnegative integer $n$ and $\alpha\in[0,1]$, then
$$
\lvert\langle \psi | u_{\lambda_*}^{(\epsilon)}\rangle - \rho_f(\lambda_*)\langle \psi | u_{\lambda_*}\rangle\rvert = \mathcal{O}(\epsilon^{n+\alpha}) + \mathcal{O}(\epsilon^m\log(\epsilon)),\qquad\text{as}\qquad\epsilon\rightarrow 0^+.
$$
Finally, the conclusions hold if $\langle \psi|\cdot\rangle$ is replaced by any continuous, $\mu_f$-integrable seminorm $|\cdot|$ on $F$.
\end{theorem}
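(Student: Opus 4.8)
The plan is to reduce the vector-valued convergence in $F$ to the scalar convolution analysis already behind \cref{thm:conv_ae,thm:conv_rates}, after establishing that the wave packet is a genuine $F$-valued integral of the generalized eigenfunctions. Pairing \cref{eqn:wave_packet} with $\phi\in\Phi$ and using $\dd\mu_{f,\phi}=\langle u_{\tilde\lambda}|\phi\rangle\,\dd\mu_f$ from \cref{eqn:geig_action}, one has $\langle u_{\lambda_*}^{(\epsilon)},\phi\rangle=\int_{\mathbb{R}}K_\epsilon(\lambda_*-\tilde\lambda)\langle u_{\tilde\lambda}|\phi\rangle\,\dd\mu_f(\tilde\lambda)$; and the compatibility of the embedding $i$ together with $u_{\lambda_*}^{(\epsilon)},u_{\lambda_*}\in F\cap\Phi^*$ gives $\langle i(\phi)|u_{\lambda_*}^{(\epsilon)}\rangle=\langle u_{\lambda_*}^{(\epsilon)},\phi\rangle$ and $\langle i(\phi)|u_{\lambda_*}\rangle=\langle u_{\lambda_*}|\phi\rangle$. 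Thus the sought identity holds weakly against the point-separating family $i(\Phi)$.

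The crux is to promote this to $\langle\psi|u_{\lambda_*}^{(\epsilon)}\rangle=\int_{\mathbb{R}}K_\epsilon(\lambda_*-\tilde\lambda)\langle\psi|u_{\tilde\lambda}\rangle\,\dd\mu_f(\tilde\lambda)$ for every $\psi\in F^*$, and this is where the Suslin hypothesis does its work. First I would check that $\tilde\lambda\mapsto u_{\tilde\lambda}$ is Borel measurable into $F$: since $\Phi$ is separable, a countable subfamily of $i(\Phi)$ already separates points of $F$, the associated pairings are Radon--Nikodym derivatives and hence measurable, and for a Suslin space the Borel $\sigma$-algebra coincides with the one generated by any countable separating family of continuous functionals; consequently $\tilde\lambda\mapsto\langle\psi|u_{\tilde\lambda}\rangle$ is measurable for all $\psi\in F^*$. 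Together with the bounded-image hypothesis and the finiteness of $\mu_f$, the map is scalarly integrable, so Thomas's integration theorem for locally convex Suslin spaces \cite{thomas1975integration} produces a Pettis integral $w_\epsilon$ whose pairing against every $\psi\in F^*$ equals the scalar integral above. Since $w_\epsilon$ and $u_{\lambda_*}^{(\epsilon)}\in F$ agree on the separating set $i(\Phi)$, they coincide, yielding the representation for all $\psi$. I expect this measurability-plus-Pettis step, and the identification of the abstract integral with $u_{\lambda_*}^{(\epsilon)}$, to be the main obstacle.

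With the representation in hand, the pointwise claims are scalar. Writing $g(\tilde\lambda)=\rho_f(\tilde\lambda)\langle\psi|u_{\tilde\lambda}\rangle$, absolute continuity on $\Omega$ gives $\langle\psi|u_{\tilde\lambda}\rangle\,\dd\mu_f=g(\tilde\lambda)\,\dd\tilde\lambda$ on a neighborhood $I\subset\mathrm{int}(\Omega)$ of $\lambda_*$, so that $\langle\psi|u_{\lambda_*}^{(\epsilon)}\rangle$ equals the convolution $(K_\epsilon*g)(\lambda_*)$ up to a tail $\int_{\mathbb{R}\setminus I}K_\epsilon(\lambda_*-\tilde\lambda)\langle\psi|u_{\tilde\lambda}\rangle\,\dd\mu_f$ that is $\mathcal{O}(\epsilon^m)$ by the decay in \cref{def:mth_order_kernel}(iii) and the finiteness of $\mu_f$. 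Continuity of $g$ at $\lambda_*$ then gives $\langle\psi|u_{\lambda_*}^{(\epsilon)}\rangle\to g(\lambda_*)=\rho_f(\lambda_*)\langle\psi|u_{\lambda_*}\rangle$ exactly as in \cref{thm:conv_ae}, while $g\in C^{n,\alpha}(I)$, Taylor-expanded about $\lambda_*$ with the degree $1,\dots,m-1$ terms annihilated by the vanishing moments of \cref{def:mth_order_kernel}(ii), reproduces the $\mathcal{O}(\epsilon^{n+\alpha})+\mathcal{O}(\epsilon^m\log(\epsilon))$ bound of \cref{thm:conv_rates}, which I would reuse verbatim.

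For a continuous, $\mu_f$-integrable seminorm $|\cdot|$ (under the analogous continuity, resp.\ $(n,\alpha)$-H\"older, hypothesis on $\lambda\mapsto\rho_f(\lambda)u_\lambda$ measured in $|\cdot|$), I would keep the vector form. Using $\int_{\mathbb{R}}K_\epsilon(\lambda_*-\tilde\lambda)\,\dd\tilde\lambda=1$ to write $\rho_f(\lambda_*)u_{\lambda_*}=\int_{\mathbb{R}}K_\epsilon(\lambda_*-\tilde\lambda)\rho_f(\lambda_*)u_{\lambda_*}\,\dd\tilde\lambda$ and subtracting from the integral representation, the difference splits into a main term $\int_I K_\epsilon(\lambda_*-\tilde\lambda)[\rho_f(\tilde\lambda)u_{\tilde\lambda}-\rho_f(\lambda_*)u_{\lambda_*}]\,\dd\tilde\lambda$ over $I$ and tails over $\mathbb{R}\setminus I$. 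Homogeneity of the seminorm gives $|\int K_\epsilon\,h\,\dd\tilde\lambda|\le\int|K_\epsilon|\,|h|\,\dd\tilde\lambda$, so the tails are $\mathcal{O}(\epsilon^m)$ once $\mu_f$-integrability of $\tilde\lambda\mapsto|u_{\tilde\lambda}|$ is invoked, and the main term is controlled as in the scalar case: seminorm-continuity of $\tilde\lambda\mapsto\rho_f(\tilde\lambda)u_{\tilde\lambda}$ at $\lambda_*$ together with the concentration of $|K_\epsilon|$ forces it to $0$, while a vector Taylor expansion whose polynomial corrections are fixed vectors scaled by the (near-)vanishing signed moments of $K_\epsilon$, and whose remainder is bounded in $|\cdot|$ by the H\"older estimate, yields the same $\mathcal{O}(\epsilon^{n+\alpha})+\mathcal{O}(\epsilon^m\log(\epsilon))$ rate.
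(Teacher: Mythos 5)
Your proposal is correct, and for the two main claims it follows essentially the paper's own route: you establish Pettis integrability of $\lambda\mapsto K_\epsilon(\lambda_*-\lambda)u_\lambda$ in $F$ from the bounded-image hypothesis together with scalar measurability against the separating family $i(\Phi)$ (the paper packages exactly this as \cref{lem:pettis_integrability_criteria}, citing Thomas; your more hands-on argument via a countable separating subfamily of $i(\Phi)$ and the Suslin Borel-structure theorem is precisely what that citation encapsulates), you identify the resulting Pettis integral with $u_{\lambda_*}^{(\epsilon)}$ because both agree against the point-separating set $i(\Phi)$ (the paper gets the weak identity from \cref{lem:wp_expansion} plus compatibility of the pairings), and you then reduce to the scalar convolution estimates behind \cref{lem:borel_meas_approx} applied to the finite Borel measure $\dd\omega_\psi=\langle\psi|u_\lambda\rangle\dd\mu_f$, whose density on $I$ is $\rho_f(\lambda)\langle\psi|u_\lambda\rangle$. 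The genuine divergence is in the seminorm statement. The paper asserts that the seminorm ``commutes'' with the Pettis integral, writing $|u_{\lambda_*}^{(\epsilon)}|=\int_{\mathbb{R}} K_\epsilon(\lambda_*-\lambda)|u_\lambda|\,\dd\mu_f(\lambda)$, and then applies \cref{lem:borel_meas_approx} to this induced scalar measure, obtaining $|u_{\lambda_*}^{(\epsilon)}|\to\rho_f(\lambda_*)|u_{\lambda_*}|$ under continuity of the scalar map $\lambda\mapsto\rho_f(\lambda)|u_\lambda|$. You instead keep the vector form, use the one-sided estimate $q\left(\int h\,\dd\nu\right)\leq\int q(h)\,\dd|\nu|$, and control the seminorm of the difference under a vector-valued continuity/H\"older hypothesis on $\lambda\mapsto\rho_f(\lambda)u_\lambda$. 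These are not the same statement: yours has a stronger hypothesis and a stronger conclusion (seminorm of the difference tends to zero, which implies the paper's conclusion by the reverse triangle inequality). Your route is also the safer one, since a continuous seminorm does not in general commute with a Pettis integral---for the oscillatory higher-order kernels ($m\geq 2$), and even for positive kernels, only the inequality with $|K_\epsilon|$ and the total variation holds---so the paper's displayed equality is best read as, or replaced by, exactly the estimate you use.
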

\begin{proof}
See~\cref{sec:appendix}.
\end{proof}

To illustrate the application of~\cref{thm:conv_abs}, consider the convergence of wave packet approximations in~\cref{sec:diff_poisson}. First, the generalized eigenfunctions of the second derivative operator are elements of the Frechet space $C_b^\infty(\mathbb{R})$ equipped with the countable family of seminorms $|f|_{j,n} = \sup_{|x|\leq n}|f^{(j)}(x)|$. The generalized eigenfunctions, sines and cosines normalized so that they are $\mu_f$-integrable, are uniformly bounded in each of these seminorms. The Schwartz space $\mathcal{S}(\mathbb{R})\subset L^2(\mathbb{R})$ defines a separating set of continuous linear functionals on $C_b^\infty(\mathbb{R})$ via $\langle \phi | f\rangle = \int_\mathbb{R} \phi\, f\,\mathrm{d}x$ and this is compatible with the dual pairing between $\Phi^*$ and $\Phi$ because $f$ induces a continuous linear functional on $\Phi$ in an analogous manner. By Fourier analysis, $\rho_f(\lambda)$ is smooth away from $\lambda=0$, so~\cref{thm:conv_abs} guarantees convergence in each seminorm: $\smash{|u_\lambda^{(\epsilon)}|_{j,n}\rightarrow \rho_f(\lambda)|u_\lambda|_{j,k}}$. The conclusion of~\cref{thm:conv_point} is strengthened; the continuity of $u_{\lambda_*}$ and its derivatives is leveraged and each derivative converges uniformly on the compact intervals $K_n=[-n,n]$. 

Alternatively, consider the convergence of wave packet approximations for the multiplication operator in~\cref{sec:mult_poisson}. The generalized eigenfunctions, Dirac delta distributions, are elements of the space of finite, positive measures on $[-1,1]$, denoted $M([-1,1])$, equipped with the topology of weak convergence of measures. This topological vector space is locally convex and Suslin because it is separable and completely metrizable via the L\'evy--Prokhorov metric~\cite[p.~72]{billingsley1999convergence}. Each element of the nuclear space $\phi\in C^\infty([-1,1])$ naturally induces a continuous linear functional on $M([-1,1])$ via $\int_{-1}^1 \phi(x)\,\mathrm{d}\mu(x)$, for all $\mu\in M([-1,1])$ and the collection of all such functionals separates points. Therefore, in regions of the spectrum of the multiplication operator where the density $\rho_f(\lambda)$ is continuous, the wave packet approximations converge weakly in the sense of measures.

\subsection{Examples}\label{sec:more_example}

This section explores the generalized eigenfunctions and spectral measures of differential and integral operators using the high-order wave packet approximations introduced in~\cref{sec:rat_kern} and studied in~\cref{sec:convergence}.

\subsubsection{Example 3: Integral operator}\label{sec:integral_example}

Consider a trace-class integral perturbation of the cubic multiplication operator in~\cref{sec:mult_ex}, given by
$$
[Au](x) = (x^3-x)u(x) + \int_{-1}^1 e^{-(x^2+y^2)}u(y)\dd y.
$$
The absolutely continuous spectrum of $A$ fills the unit interval, as in the unperturbed case. However, the perturbation alters the generalized eigenfunctions and spectral measures of $A$. In fact, since the perturbation is a rank-one operator on $L^2(\Omega)$, where $\Omega = (-1,1)$, the resolvent of $A$ can be computed explicitly with a continuous analogue of the Sherman-Morrison matrix identity~\cite{sherman1950adjustment,hager1989updating}. Denoting the unperturbed operator by $\tilde A$ and  $g(x)=\exp(-x^2)$ we calculate that, for $z\not\in\Lambda(A)$,
$$
(A-z)^{-1}f =  \frac{1}{x^3-x-z}\left[f + \frac{\langle R_{\tilde A}(z)g|f\rangle}{1+\langle R_{\tilde A}(z)g|g\rangle}g\right]\qquad\forall f\in L^2(\Omega).
$$
Considering Stone's formula, it is clear that while the generalized eigenfunctions of $A$ are no longer simple combinations of Dirac delta distributions, they remain dominated by singularities at the roots of the cubic family, $x^3-x-\lambda$, as before. 

\begin{figure}[tbp!]
    \centering
    \begin{minipage}{0.48\textwidth}
        \begin{overpic}[width=\textwidth]{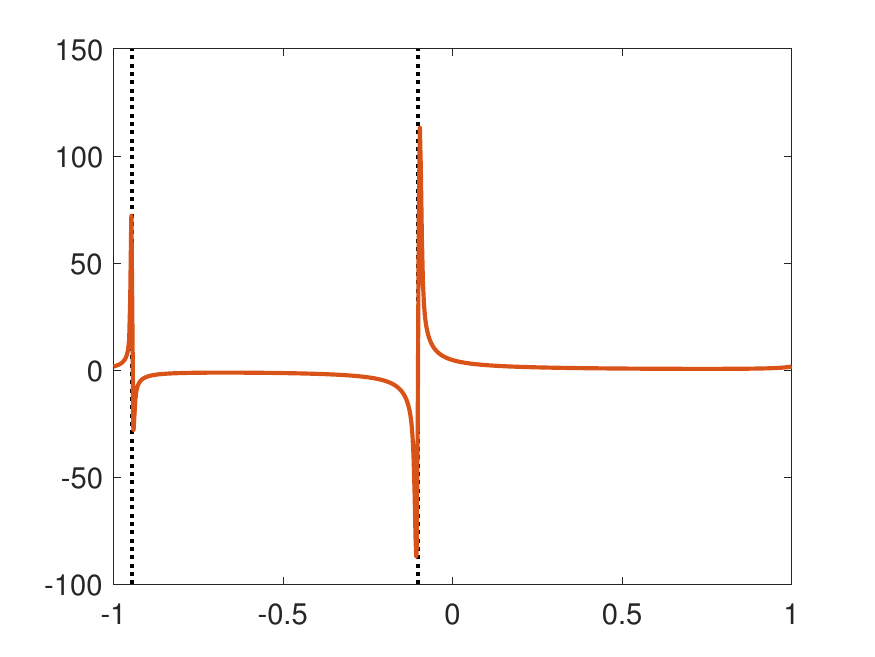}
        \put (50,-1) {$\displaystyle x$}
        \put (47,72) {$\displaystyle \psi_\lambda^{(\epsilon)}(x)$}
        \end{overpic}
    \end{minipage}
    \begin{minipage}{0.48\textwidth}
        \begin{overpic}[width=\textwidth]{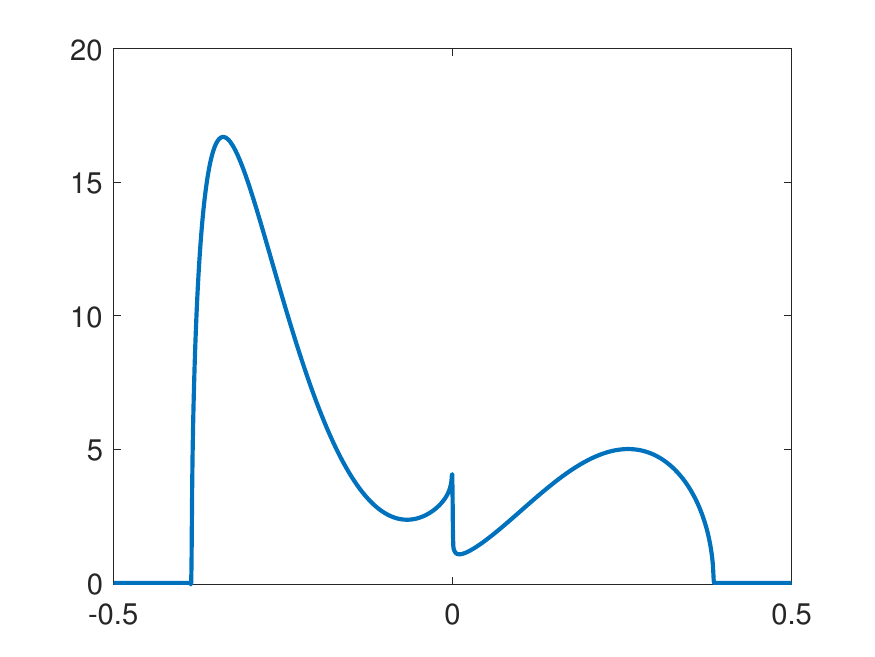}
        \put (50,-1) {$\displaystyle \epsilon$}
        \put (47,72) {$\displaystyle \mu_f^{(\epsilon)}(\lambda)$}
        \end{overpic}
    \end{minipage}
    \caption{\label{fig:perturbed_cubic} A $4^{\rm th}$-order wave packet approximation (left panel) to a generalized eigenfunction of the perturbed multiplication operator in~\cref{sec:integral_example} with $\lambda=0.1$ and $\epsilon=0.01$. The dashed vertical lines mark the location of the unperturbed operator's Dirac delta eigenfunctions, i.e., the roots of the cubic $x^3-x-\lambda$. The right panel displays a $4^{\rm th}$ order approximation to the spectral measure of the perturbed operator.}
\end{figure}

The left panel of \Cref{fig:perturbed_cubic} shows a $4^{\rm th}$-order wave packet approximation to the $\lambda=0.1$ generalized eigenfunction of $A$ with smoothing parameter set to $\epsilon=0.01$. The wave packet approximation has sharp peaks at the roots of the cubic $x^3-x-\lambda$ and these become taller and narrower as $\epsilon\rightarrow 0$. A $4^{\rm th}$ order smoothed approximation to the spectral measure of $A$, with respect to $\phi(x)=(2+x)\cos(2\pi x)$, is shown in the the right-hand panel. The spectral measure appears to have algebraic singularities at the endpoints of the spectrum but, in contrast to the unperturbed case, is bounded there (c.f., right panel of ~\cref{fig:mult_op}). 

\subsubsection{Example 4: 1D Schr\"odinger operators}

The generalized eigenfunctions of Schr\"odinger operators, often called scattering states, play an important role in the theory of quantum mechanical scattering. A one-dimensional Schr\"odinger operator with a smooth, integrable potential has the form
\begin{equation}\label{eqn:schrodinger_1D}
[Au](x) = -\frac{\mathrm{d}^2u}{\mathrm{d}x^2} + v(x)u, \qquad\text{where}\qquad u\in H^2(\mathbb{R}).
\end{equation}
Here, the potential satisfies $v\in L^1(\mathbb{R})\cap C^\infty(\mathbb{R})$ and $H^2(\mathbb{R})$ indicates the Sobolev space of functions with two square-integrable weak derivatives. The spectrum of $A$ is absolutely continuous on the positive real axis with multiplicity $m=2$ and the asymptotics of the associated generalized eigenfunctions as $x\rightarrow\pm\infty$ are oscillatory~\cite{agmon1975spectral}.

\Cref{fig:schrodinger_short,fig:schrodinger_long} show $4^{\rm th}$-order wave packet approximations to generalized eigenfunctions of~\cref{eqn:schrodinger_1D} equipped with a short-range and a long-range potential, respectively. The potential functions are plotted in the left panel of the corresponding figure. Our numerical scheme (orange curves, right panels) achieves high-order accuracy in both cases by combining the high-order accurate wave packet approximations and banded, spectrally accurate discretizations of the resolvent of~\cref{eqn:schrodinger_1D} on the whole real line \cite{christov1982complete,iserles2020family}. We compare with approximations computed using a Lippmann-Schwinger formulation of the generalized eigenvalue problem followed by careful domain truncation with a perfectly matched layer (black curves). Approximations based on domain truncation and asymptotics may converge slowly in the truncation parameter for long-range potentials like the one in~\cref{fig:schrodinger_long} (left panel).

\begin{figure}[tbp!]
    \centering
    \begin{minipage}{0.48\textwidth}
        \begin{overpic}[width=\textwidth]{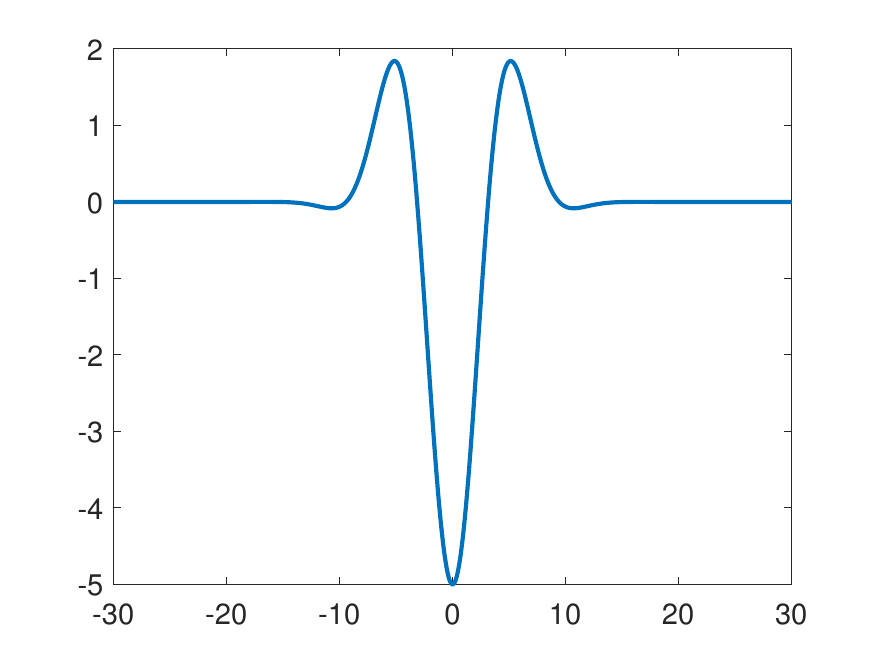}
        \put (50,-1) {$\displaystyle x$}
        \put (47,72) {$\displaystyle v(x)$}
        \end{overpic}
    \end{minipage}
    \begin{minipage}{0.48\textwidth}
        \begin{overpic}[width=\textwidth]{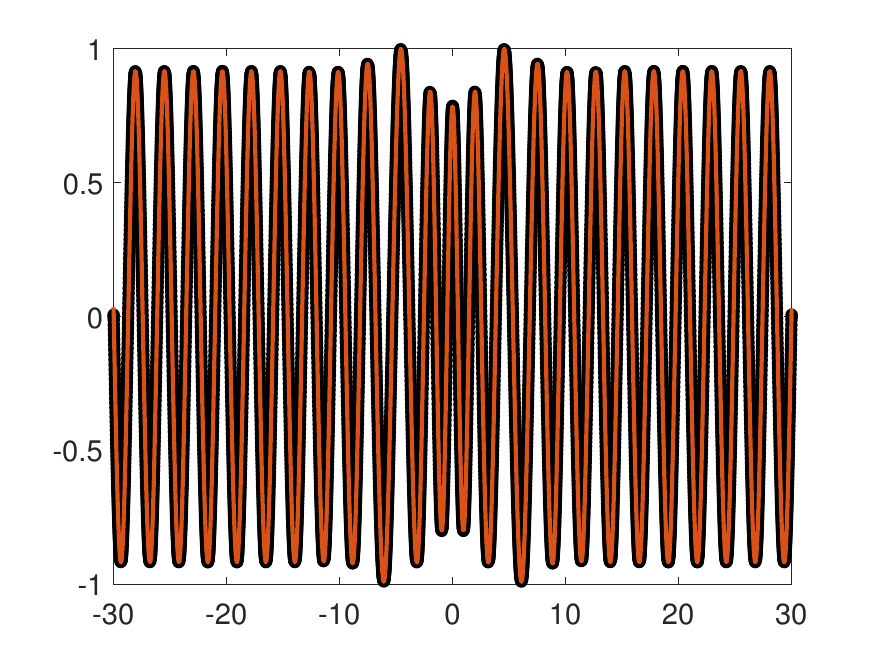}
        \put (50,-1) {$\displaystyle x$}
        \put (47,72) {$\displaystyle \psi_\lambda(x)$}
        \end{overpic}
    \end{minipage}
    \caption{\label{fig:schrodinger_short} Approximations to an even generalized eigenfunction corresponding to $\lambda=8$ (right panel) of a Schr\"odinger operator with a short-range potential $v(x) = -5\cos(x/2)\exp(-x^2/32)$ (left panel), computed using a $4^{\rm th}$-order wave packet scheme with $\epsilon=0.1$ (orange) and a PML scheme (black).}
\end{figure}

\begin{figure}[tbp!]
    \centering
    \begin{minipage}{0.48\textwidth}
        \begin{overpic}[width=\textwidth]{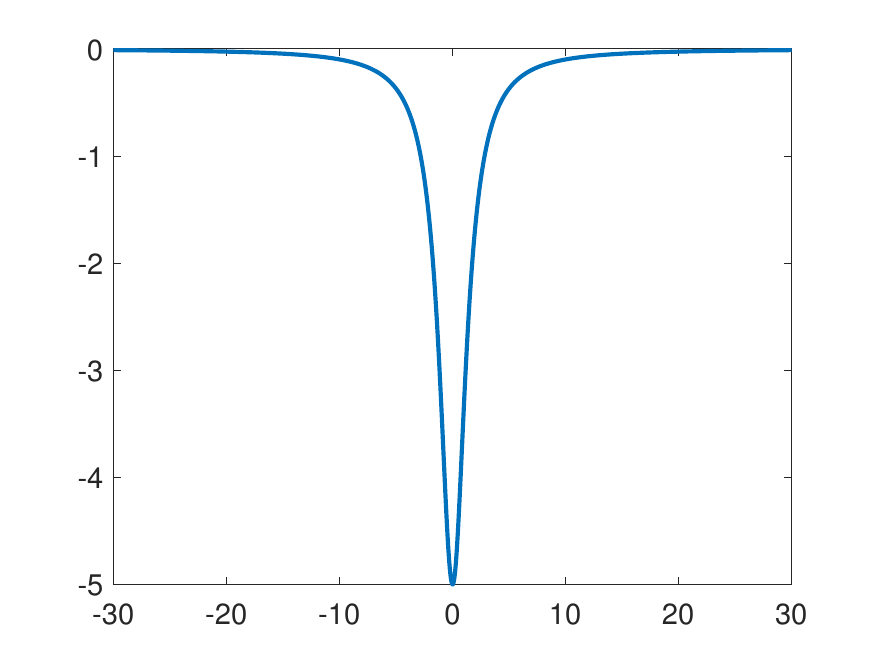}
        \put (50,-1) {$\displaystyle x$}
        \put (47,72) {$\displaystyle v(x)$}
        \end{overpic}
    \end{minipage}
    \begin{minipage}{0.48\textwidth}
        \begin{overpic}[width=\textwidth]{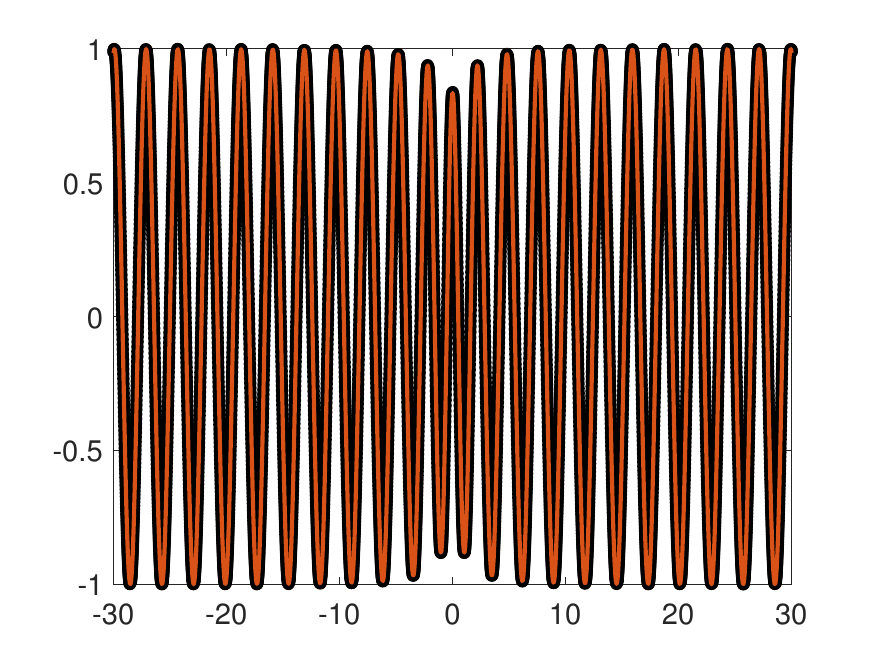}
        \put (50,-1) {$\displaystyle x$}
        \put (47,72) {$\displaystyle \psi_\lambda(x)$}
        \end{overpic}
    \end{minipage}
    \caption{\label{fig:schrodinger_long} Approximations to an even generalized eigenfunction corresponding to $\lambda=5$ (right panel) of a Schr\"odinger operator with a long-range potential $v(x) = -10/(2+x^2)$ (left panel), computed using a $4^{\rm th}$-order wave packet scheme with $\epsilon=0.1$ (orange) and a PML scheme (black).}
\end{figure}

\subsubsection{Example 5: 2D Schr\"odinger operator }

In a two-dimensional planar strip $\Omega = \{(x,y):x\in\mathbb{R},y\in(-1,1)\}$ of unit width with one unbounded dimension, the Laplacian
\begin{equation}\label{eqn:2DLap_strip}
-\Delta u = -\partial_x^2 u - \partial_y^2 u, \qquad\text{where}\qquad u\in H^2(\Omega),
\end{equation}
has an absolutely continuous spectrum on $(\pi^2/4,\infty)$. The generalized eigenfunctions of the Laplacian are tensor products of square-integrable transverse modes and generalized longitudinal modes: given a point $\lambda = (2\pi k)^2 + (n\pi/2)^2$ in the continuous spectrum with real wave number $k\in\mathbb{R}$ and positive integer $n\geq 1$, the generalized eigenfunctions have the form
\begin{equation}
\psi_{k,n}(x,y)=
\begin{cases}
\exp(2\pi i k x)\cos( n\pi y / 2), \qquad &n = 1,3,5,\ldots, \\
\exp(2\pi i k x)\sin( n\pi y / 2), \qquad &n = 2,4,6,\ldots.
\end{cases}
\end{equation}
Note that the multiplicity of the spectrum increases as $\lambda$ increases, beginning with $m=2$ between $\pi^2/4$ and $\pi^2$, $m=4$ between $\pi^2$ and $9\pi^2/4$, and so on, increasing by $2$ at each eigenvalue of the one-dimensional Laplacian along the transverse direction. \Cref{fig:2DLap_regular} shows $4^{\rm th}$-order wave packet approximations to generalized eigenfunctions associated with $\lambda=\pi(\pi-1)$ corresponding to the lowest transverse mode ($n=1$) and with $\lambda=\pi(\pi+1)$ corresponding to the lowest odd transverse mode ($n=2$).

The spectral measures of the two-dimensional Laplacian can be computed analytically via Fourier analysis, analogous to~\cref{sec:diff_ex}, and we find that
$$
\rho_f(\lambda) = \sum_{n = 1}^\infty\ \frac{\left(\hat f\left(\sqrt{\lambda - (n\pi/2)^2}/(2\pi)\right)\right)^2-\left(\hat f\left(-\sqrt{\lambda - (n\pi/2)^2}/(2\pi)\right)\right)^2}{\sqrt{\lambda - (n\pi/2)^2}/(2\pi)}\chi_{\lambda\geq (n\pi/2)^2}.
$$
In general, the spectral measure may have singularities at the points $(n\pi / 2)^2$ where the multiplicity changes due to the transverse modes. \Cref{fig:2DLap_singular} illustrates how the wave-packet approximations may be corrupted at points very close to these singularities in the measure, as nearby transverse modes cause local phase distortions in the wave packet approximation that are only removed slowly as $\epsilon\rightarrow 0$. This is a Fourier analogue of the phenomenon illustrated  in~\cref{fig:mult_op_poisson} for the multiplication operator.

\begin{figure}[tbp!]
    \centering
    \begin{minipage}{0.48\textwidth}
        \begin{overpic}[width=\textwidth]{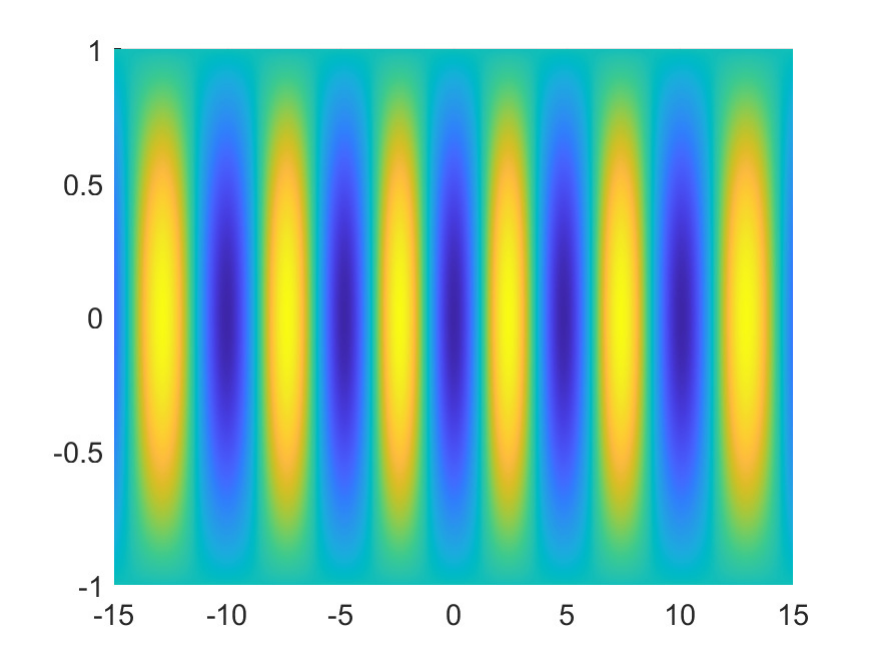}
        \put (51,-1) {$\displaystyle x$}
        \put (2,38) {$\displaystyle y$}
        \end{overpic}
    \end{minipage}
    \begin{minipage}{0.48\textwidth}
        \begin{overpic}[width=\textwidth]{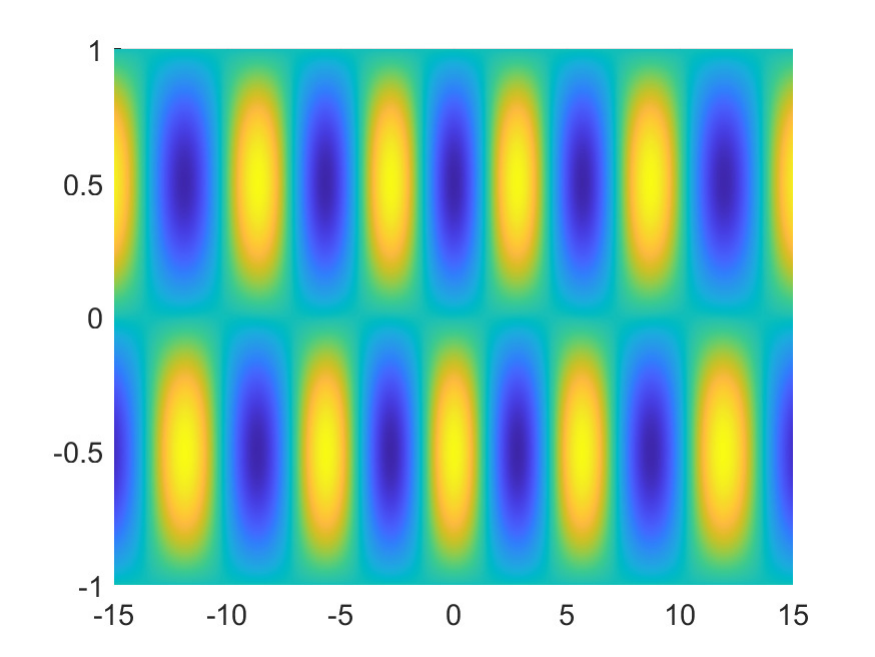}
        \put (51,-1) {$\displaystyle x$}
        \put (2,38) {$\displaystyle y$}
        \end{overpic}
    \end{minipage}
    \caption{\label{fig:2DLap_regular} Wave packet approximations to the generalized eigenfunctions of the two-dimensional Laplacian in a strip of unit width, corresponding to $\lambda=\pi(\pi-1)$ (left panel) and $\lambda=\pi(\pi+1)$ (right panel).}
\end{figure}

\begin{figure}[tbp!]
    \centering
    \begin{minipage}{0.48\textwidth}
        \begin{overpic}[width=\textwidth]{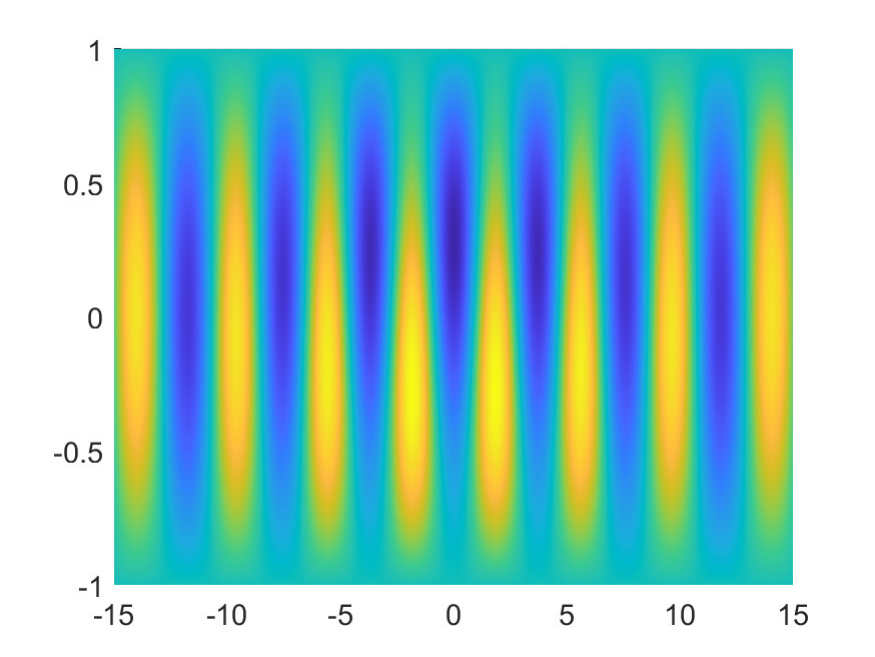}
        \put (51,-1) {$\displaystyle x$}
        \put (2,38) {$\displaystyle y$}
        \end{overpic}
    \end{minipage}
    \begin{minipage}{0.48\textwidth}
        \begin{overpic}[width=\textwidth]{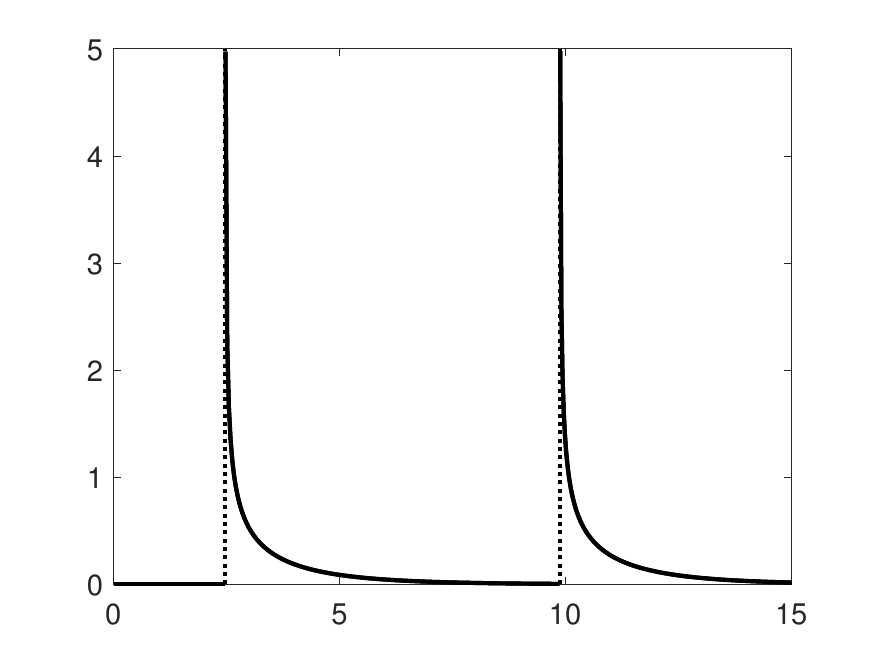}
        \put (50,-1) {$\displaystyle \lambda$}
        \put (47,72) {$\displaystyle \rho_f(\lambda)$}
        \end{overpic}
    \end{minipage}
    \caption{\label{fig:2DLap_singular} A $4^{\rm th}$-order wave packet approximation to a generalized eigenfunction of the two-dimensional Laplacian in a strip of unit width (left panel), corresponding to $\lambda=\pi^2-0.2$, is corrupted with $\epsilon=0.1$ due to unwanted contributions from nearby transverse sinusoidal modes (with spectral parameters $\lambda \geq \pi^2$). The spectral measure (right panel) has singularities (dashed lines) at $\lambda=\pi^2/4$, the leftmost endpoint of the spectrum, and $\lambda=\pi^2$, where the transverse sinusoidal modes appear and the multiplicity of the continuous spectrum jumps from $2$ to $4$.}
\end{figure}

\section{Application to Internal Waves}\label{sec:int_waves}

For a bounded, simply connected open set $\Omega \subset \mathbb{R}^2$, the Poincar\'e problem is the following evolution problem~\cite{sobolev2006new,ralston1973stationary,brouzet2016internal,dauxois2018instabilities,de2020attractors}:
\begin{equation}\label{CHAP4eq:PoincarePDE}
\begin{split}
\left[\frac{\partial^2}{\partial t^2}\left(\frac{\partial^2 g}{\partial x_1^2}+\frac{\partial^2 g}{\partial x_2^2}\right)+\frac{\partial^2 g}{\partial x_2^2}\right](x,t) = u(x) \cos(\lambda t) ,\\
g(x,0)=[\partial_t g](x,0)=0, \quad g\restriction_{\partial \Omega } = 0,
\end{split}
\end{equation}
where $\lambda \in (0, 1)$ and $g$ is the streamfunction. This equation models internal waves in a stratified fluid within a two-dimensional aquarium driven by an oscillatory forcing term. Internal waves are key in oceanography and rotating fluids studies~\cite{maas2005wave,sibgatullin2019internal}. $\Omega$'s geometry and the forcing frequency $\lambda$ can concentrate fluid velocity on attractors, a phenomenon predicted by~\cite{maas1995geometric} and confirmed experimentally by~\cite{maas1997observation}.

Let $-B$ be the Dirichlet Laplacian on $\Omega$, $A=-\partial_{x_2}^2$ and
$$
L=B^{-1}A:H_0^1(\Omega)\rightarrow H_0^1(\Omega).
$$
The operator $L$ generates a wave equation equivalent to~\eqref{CHAP4eq:PoincarePDE} and $\Lambda(L)=[0,1]$~\cite{ralston1973stationary}. Its spectral type, however, is a complicated matter. Descriptions of singular profiles in the long-time evolution of~\cref{CHAP4eq:PoincarePDE}, along with associated spectral results of $L$, are accessible via microlocal analysis~\cite{de2020attractors,colin2020spectral,dyatlov2019microlocal,tao20190,wang2022dynamics,wangscattering,dyatlov2021mathematics}. There is also significant study of $L$ in the Russian literature, where is known as the Sobolev problem. See, for example, \cite{fokin1993existence,troitskaya2017mathematical}. In particular, some domains $\Omega$ lead to singular continuous spectra. It is known that if the forcing frequency meets the Morse--Smale conditions in a neighborhood around $\lambda$, the singular profile relates to \cite[Theorem 2]{dyatlov2021mathematics}
$$
u^{+}=\lim_{\epsilon\downarrow 0}(A-(\lambda^2-i\epsilon)B)^{-1}u\in H^{1/2-}(\Omega).
$$
The imaginary part of $u^{+}$ corresponds to a generalized eigenfunction, which we can compute using our method.

\begin{figure}
\centering
\includegraphics[width=0.49\textwidth,clip,trim={0mm 0mm 0mm 0mm}]{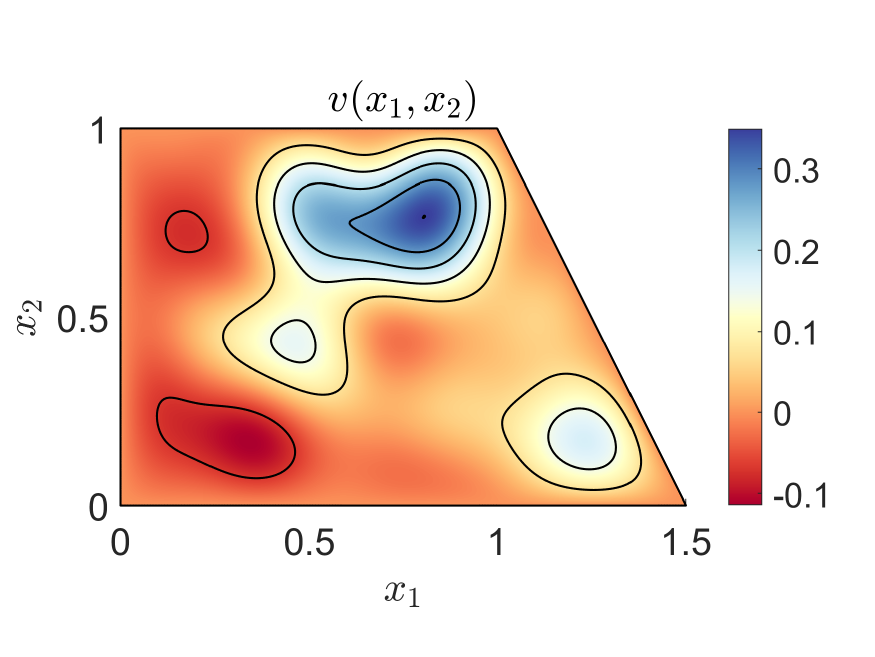}
\hfill
\includegraphics[width=0.49\textwidth,clip,trim={0mm 0mm 0mm 0mm}]{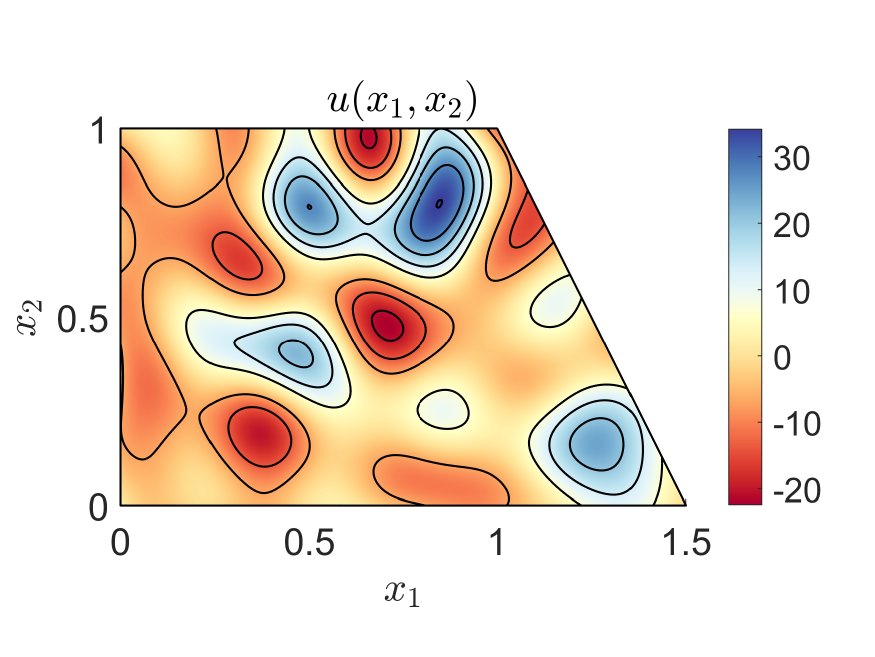}\vspace{-5mm}
\caption{Left: Random function on a trapezium for which we compute spectral measures. Right: The function $u=Bv$.}
\label{fig:internal_wave_u}
\end{figure}

\begin{figure}
\centering
\includegraphics[width=0.6\textwidth,clip,trim={0mm 0mm 0mm 0mm}]{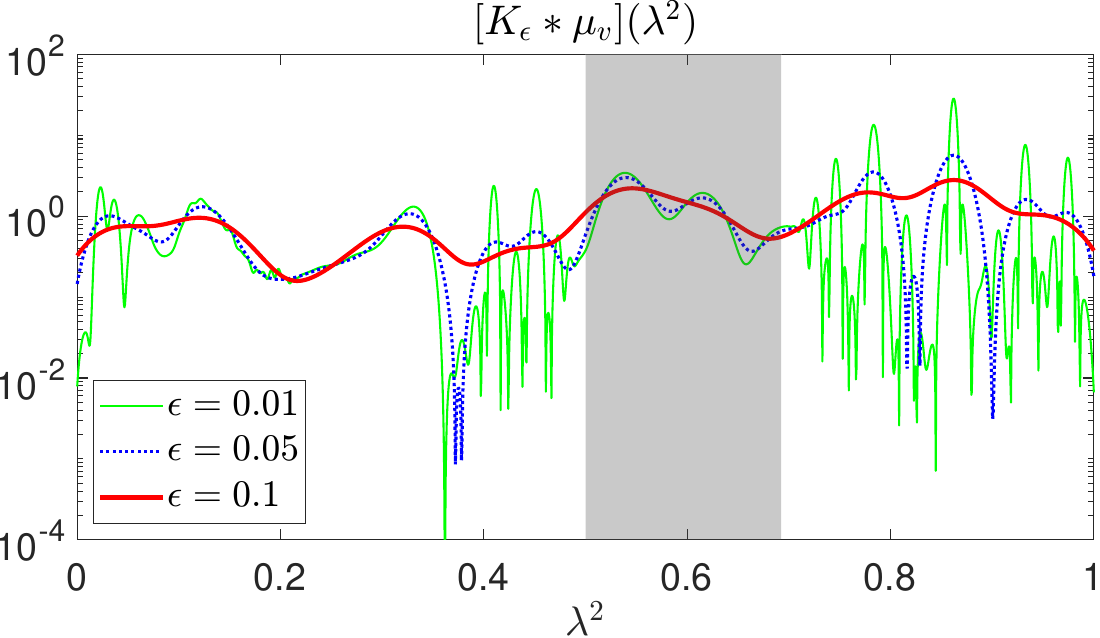}
\caption{Spectral measure of $A-\lambda B$ with respect to the random function $v$ shown in~\cref{fig:internal_wave_u}. The horizontal axis is scaled against frequency, and the spectrum is purely absolutely continuous in the shaded region.}
\label{fig:measure}
\end{figure}

Let $\Omega$ be a trapezium and $v$ a randomly chosen function, as shown in \cref{fig:internal_wave_u}. To solve the linear systems and compute the resolvent, we employ ultraSEM \cite{fortunato2021ultraspherical}, a hp-adaptive finite element code. The resulting smoothed spectral measures for various $\epsilon$ and the $6^{\rm th}$ order kernel from \cref{fig:mth_order_kernel} are presented in \cref{fig:measure}. The shaded area denotes regions where the spectrum is known to be absolutely continuous~\cite{dyatlov2021mathematics}. Here, the spectral measure has converged. There are additional areas with pronounced oscillations, indicating a singular spectrum. \cref{fig:num_attractors} plots the computed functions $u^+$, where the spectrum is locally absolutely continuous around each selected value of $\lambda$. These are compared to the internal wave attractors observed experimentally in \cite{hazewinkel2010observations}, shown in \cref{fig:exp_attractors}. The similarity between these numerical computations and the experimental observations is striking.

\begin{figure}[th!]
\centering
\begin{tabular}{c|c|c|c}
$\lambda=0.550$&$\lambda=0.590$&$\lambda=0.686$&$\lambda=0.790$\\
\hline
\includegraphics[width=0.22\textwidth,clip,trim={0mm -6mm 0mm -6mm}]{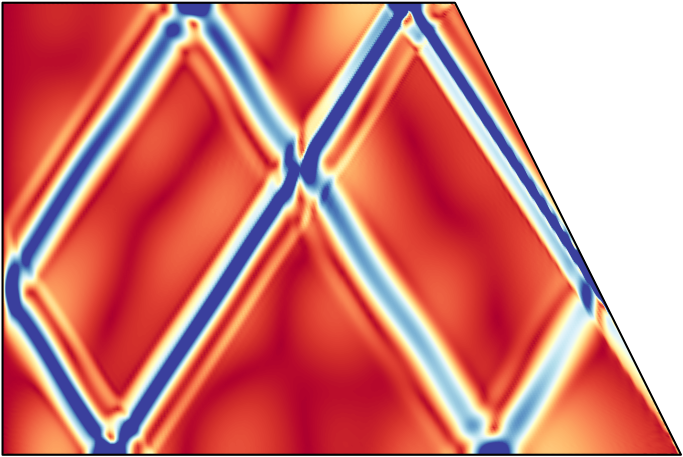} & 
\includegraphics[width=0.22\textwidth,clip,trim={0mm -6mm 0mm -6mm}]{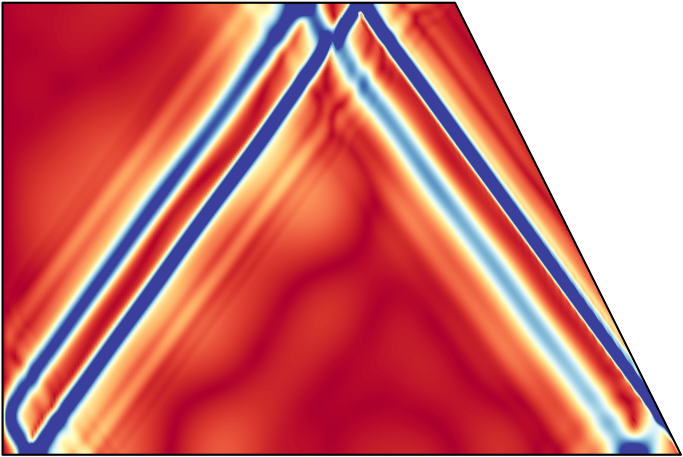} &
\includegraphics[width=0.22\textwidth,clip,trim={0mm -6mm 0mm -6mm}]{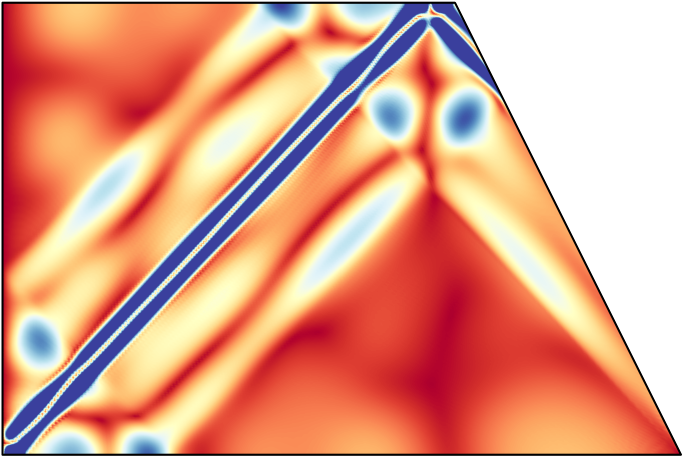} &
\includegraphics[width=0.22\textwidth,clip,trim={0mm -6mm 0mm -6mm}]{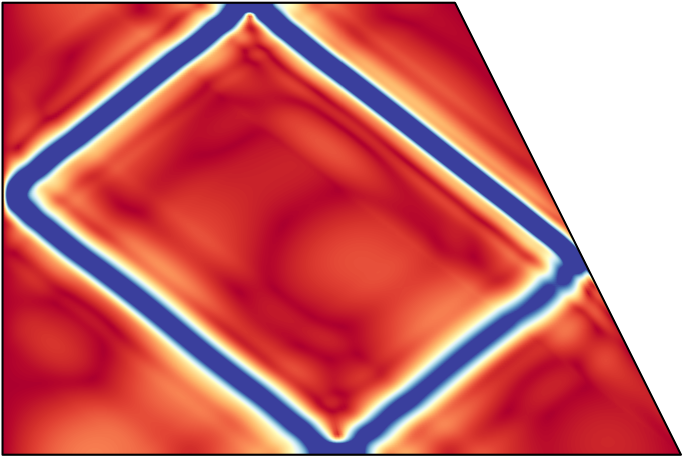}\\
\hline
\includegraphics[width=0.22\textwidth,clip,trim={0mm -6mm 0mm -6mm}]{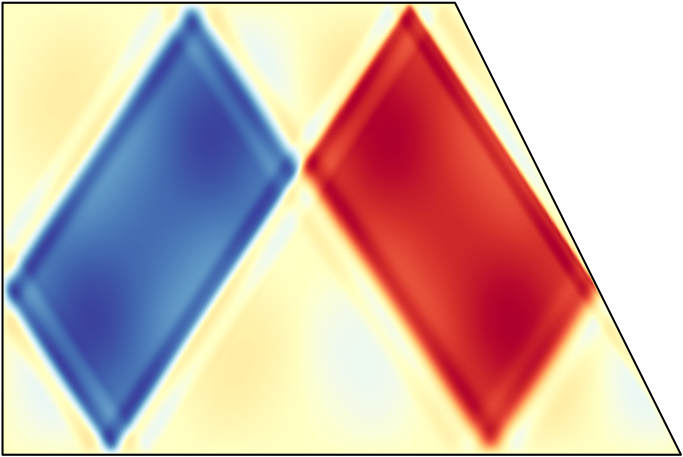} & 
\includegraphics[width=0.22\textwidth,clip,trim={0mm -6mm 0mm -6mm}]{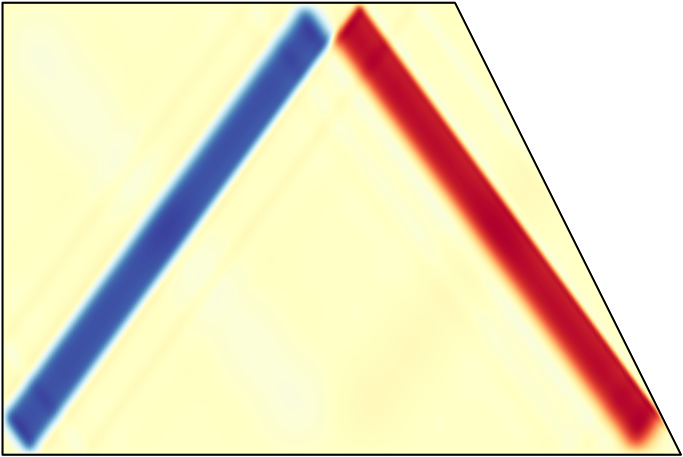} &
\includegraphics[width=0.22\textwidth,clip,trim={0mm -6mm 0mm -6mm}]{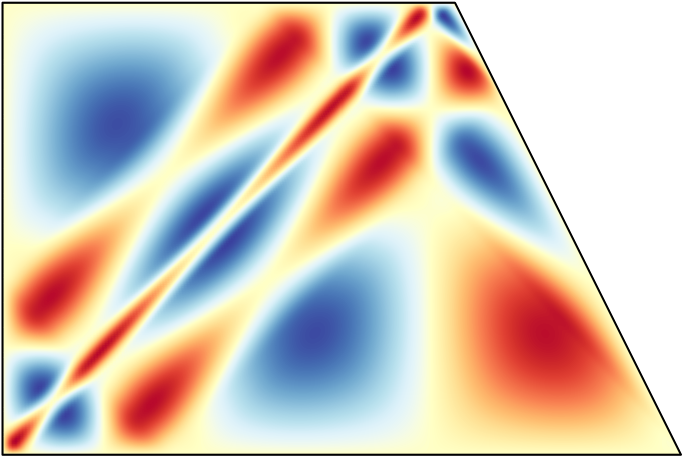} &
\includegraphics[width=0.22\textwidth,clip,trim={0mm -6mm 0mm -6mm}]{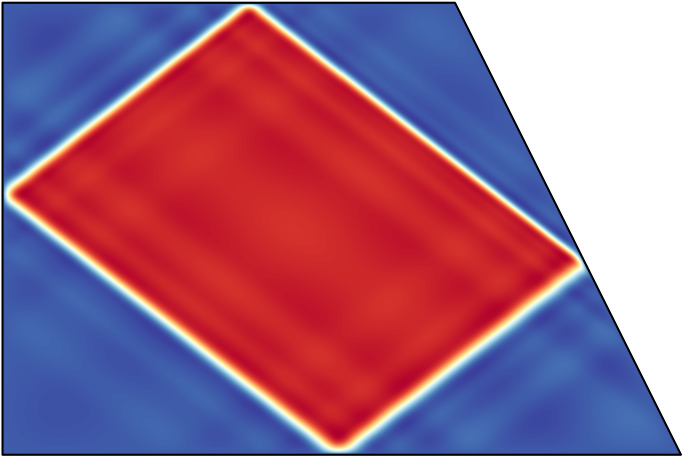}\\
\end{tabular}
\caption{\label{fig:num_attractors}The function $u^+$ approximated using $\epsilon=0.01$ and the $6^{\rm th}$ order kernel. The top row shows the absolute value of $\partial_{x_1}u^+$, corresponding to the fluid velocity concentration on the attractor. The bottom part shows the imaginary part of $u^+$, which corresponds to a generalized eigenfunction. These plots show striking similarity with the experimentally observed attractors in~\cref{fig:exp_attractors}.}
\vspace{2mm}
\begin{tabular}{c|c|c|c}
\includegraphics[width=0.22\textwidth,clip,trim={0mm -2mm 0mm -2mm}]{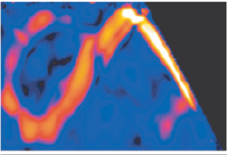} & 
\includegraphics[width=0.22\textwidth,clip,trim={0mm -2mm 0mm -2mm}]{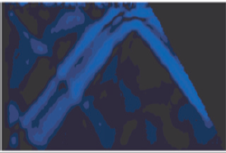} &
\includegraphics[width=0.22\textwidth,clip,trim={0mm -2mm 0mm -2mm}]{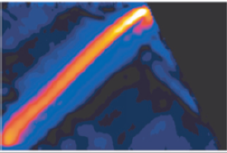} &
\includegraphics[width=0.22\textwidth,clip,trim={0mm -2mm 0mm -2mm}]{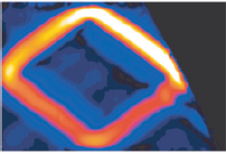}\\
\hline
\includegraphics[width=0.22\textwidth,clip,trim={0mm -2mm 0mm -2mm}]{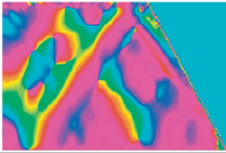} & 
\includegraphics[width=0.22\textwidth,clip,trim={0mm -2mm 0mm -2mm}]{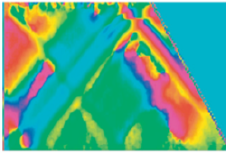} &
\includegraphics[width=0.22\textwidth,clip,trim={0mm -2mm 0mm -2mm}]{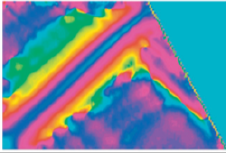} &
\includegraphics[width=0.22\textwidth,clip,trim={0mm -2mm 0mm -2mm}]{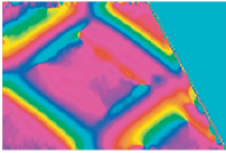}\\
\end{tabular}
\caption{\label{fig:exp_attractors}From~\cite{hazewinkel2010observations}: Observations of steady-state patterns in terms of harmonic amplitude (top) and phase (bottom). Reproduced from \cite{hazewinkel2010observations}, with the permission of AIP Publishing.}
\end{figure}

\appendix

\section{Proofs of convergence and approximation rates}\label{sec:appendix}

In this section, we prove the convergence results and approximation rates stated in~\cref{sec:convergence}. For clarity, we restate some results before their proof. We begin with an approximation result for complex Borel measures smoothed by rational convolution kernels (c.f.,~\cref{sec:rat_kern}). Similar convergence rates for smooth measures were derived by Colbrook et al~\cite{colbrook2020computing}. We derive a sharper result for non-smooth measures, proving convergence on the Lebesgue set of the Radon-Nikodym derivative.

Recall that the Lebesgue set of a (possibly complex-valued) integrable function $\rho:\mathbb{R}\rightarrow\mathbb{C}$ is the set of Lebesgue points,  given by
$$
\Gamma_{\rm Leb}(\rho) = \left\{\lambda\in\mathbb{R} : \lim_{\epsilon\rightarrow 0}\frac{1}{2\epsilon}\int_{\lambda-\epsilon}^{\lambda+\epsilon}|\rho(\tilde\lambda)-\rho(\lambda)|\dd\tilde\lambda=0\right\}.
$$
The Lebesgue set of an integrable function has full measure and every point of continuity is a Lebesgue point~\cite[p.~106]{stein2009real}.

\begin{lemma}\label{lem:borel_meas_approx}
Given $\lambda\in\mathbb{R}$ and $\delta>0$, let $\mu$ be a Borel measure on $\mathbb{R}$ that is absolutely continuous on the closed interval $\Omega=[\lambda+\delta,\lambda+\delta]$ with Radon-Nikodym derivative $\rho$, and let $K$ be an $m$th order kernel for some integer $m\geq 1$. It holds that
$$
\lim_{\epsilon\downarrow0}|[K_\epsilon * \mu](\lambda)-\rho(\lambda)| = 0 \qquad\forall \lambda\in\Gamma_{\rm Leb}(\rho).
$$
Furthermore, let $0<\epsilon<1$ and the total variation norm of $\mu$ be $\|\mu\|$. If $\rho\in C^{n,\alpha}(\Omega)$ for some $0\leq\alpha\leq 1$ and integer $n\geq 1$, then
$$
|[K_\epsilon * \mu](\lambda)-\rho(\lambda)| \leq \frac{\epsilon^m \|\mu\|}{(\epsilon+\delta)^{m+1}} +  \begin{cases}
\|\rho\|_{C^{n,\alpha}(\Omega)} (1+\delta^{-n-\alpha})\epsilon^{n+\alpha}, \qquad& n+\alpha < m, \\
\|\rho\|_{C^m(\Omega)} (1+\delta^{-m})\epsilon^m\log(1+\epsilon^{-1}), \qquad& n+\alpha\geq m.
\end{cases}
$$
\end{lemma}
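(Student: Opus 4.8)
The plan is to decompose the convolution into a local piece, where $\mu$ agrees with $\rho\dd\lambda$, together with two tail pieces that are controlled purely by the decay condition (iii). Writing $I=[\lambda-\delta,\lambda+\delta]$ and using the normalization (i) to write $\rho(\lambda)=\rho(\lambda)\int_{\mathbb{R}}K_\epsilon(\lambda-s)\dd s$, I would split
\[
[K_\epsilon*\mu](\lambda)-\rho(\lambda)=\underbrace{\int_I K_\epsilon(\lambda-s)[\rho(s)-\rho(\lambda)]\dd s}_{(T_1)}+\underbrace{\int_{\mathbb{R}\setminus I}K_\epsilon(\lambda-s)\dd\mu(s)}_{(T_2)}-\underbrace{\rho(\lambda)\int_{\mathbb{R}\setminus I}K_\epsilon(\lambda-s)\dd s}_{(T_3)},
\]
since on $I$ the equality $\dd\mu=\rho\dd s$ combines the two interior integrals into $(T_1)$. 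All of the analytic content is in $(T_1)$; $(T_2)$ and $(T_3)$ are error terms living away from $\lambda$.

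For the tails I rewrite (iii) in scaled form, $|K_\epsilon(\lambda-s)|\le C_K\epsilon^m/(\epsilon+|\lambda-s|)^{m+1}$. Every $s\notin I$ satisfies $|\lambda-s|\ge\delta$, so $|K_\epsilon(\lambda-s)|\le C_K\epsilon^m/(\epsilon+\delta)^{m+1}$ and hence $|(T_2)|\le C_K\epsilon^m\|\mu\|/(\epsilon+\delta)^{m+1}$, which is exactly the first term of the claimed bound. For $(T_3)$ the substitution $t=(\lambda-s)/\epsilon$ and (iii) give $\int_{\mathbb{R}\setminus I}|K_\epsilon(\lambda-s)|\dd s\le\int_{|t|\ge\delta/\epsilon}|K(t)|\dd t=\mathcal{O}((\epsilon/\delta)^m)$, so $|(T_3)|\lesssim|\rho(\lambda)|\,\delta^{-m}\epsilon^m$, which is absorbed into the second terms of the estimate.

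For the pointwise assertion I would not use smoothness at all. Since $m\ge1$, the function $\Psi(t)=C_K(1+|t|)^{-(m+1)}$ is a radially decreasing, integrable majorant with $|K_\epsilon|\le\Psi_\epsilon$, where $\Psi_\epsilon(x)=\epsilon^{-1}\Psi(x/\epsilon)$. The classical approximate-identity theorem for such majorants then gives $[K_\epsilon*(\rho\chi_I)](\lambda)\to\rho(\lambda)$ at every Lebesgue point of $\rho$, and since $(T_2),(T_3)\to0$ this proves convergence on $\Gamma_{\rm Leb}(\rho)$. For the rate I Taylor-expand $\rho$ about $\lambda$ on $\Omega$, writing $\rho(s)=P(s)+R(s)$ with $P(s)=\sum_{j=0}^n\frac{\rho^{(j)}(\lambda)}{j!}(s-\lambda)^j$ and $|R(s)|\lesssim\|\rho\|_{C^{n,\alpha}(\Omega)}|s-\lambda|^{n+\alpha}$, and substitute $t=(\lambda-s)/\epsilon$ in $(T_1)$. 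The polynomial part becomes $\sum_{j=1}^n\frac{\rho^{(j)}(\lambda)}{j!}(-\epsilon)^j\int_{|t|\le\delta/\epsilon}K(t)t^j\dd t$; for $0<j<m$ the full moment vanishes by (ii), so the truncated integral equals its tail $-\int_{|t|>\delta/\epsilon}K(t)t^j\dd t=\mathcal{O}((\epsilon/\delta)^{m-j})$, contributing $\mathcal{O}(\epsilon^m\delta^{\,j-m})$, which collects into negative powers of $\delta$.

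Finally, the Hölder remainder contributes $\lesssim\|\rho\|_{C^{n,\alpha}(\Omega)}\,\epsilon^{n+\alpha}\int_{|t|\le\delta/\epsilon}|K(t)||t|^{n+\alpha}\dd t$, and here the two regimes separate. If $n+\alpha<m$ then $|K(t)||t|^{n+\alpha}\lesssim|t|^{n+\alpha-m-1}$ is integrable on $\mathbb{R}$, the integral is a finite constant, and the remainder is $\mathcal{O}(\epsilon^{n+\alpha})$, dominating the $\mathcal{O}(\epsilon^m\delta^{\,j-m})$ polynomial tails and giving the first case. If $n+\alpha\ge m$ the weighted integral diverges and truncation at $\delta/\epsilon$ produces $\mathcal{O}(\log(1/\epsilon))$ at the borderline $n+\alpha=m$ (equivalently, the $j=m$ moment term when $n\ge m$) and $\mathcal{O}((\delta/\epsilon)^{n+\alpha-m})$ otherwise, yielding $\mathcal{O}(\epsilon^m\log(1+\epsilon^{-1}))$. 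Collecting $(T_1)$–$(T_3)$ and bounding the various $\delta$-power tails by $\|\rho\|_{C^{n,\alpha}(\Omega)}(1+\delta^{-n-\alpha})$, respectively $\|\rho\|_{C^m(\Omega)}(1+\delta^{-m})$, gives the stated bound. I expect the main obstacle to be exactly this borderline bookkeeping: isolating the logarithm that appears when the weight $|t|^{n+\alpha}$ just fails to be $K$-integrable, and tracking the constants uniformly in $\delta$ so they assemble into the clean closed form; everything else is routine estimation using (i)–(iii).
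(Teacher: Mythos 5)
Your proposal is correct and follows essentially the same route as the paper's proof: split the convolution into the piece over the interval where $\mu$ has density $\rho$ and the tail over the complement, kill the tail with the decay condition (iii), get a.e.\ convergence at Lebesgue points from the classical approximate-identity theorem with the radially decreasing integrable majorant (the paper cites Stein--Weiss, Thm.~1.25, for exactly this), and obtain the rates by Taylor expansion combined with the vanishing-moment condition (ii), with the logarithm emerging at the borderline $n+\alpha\geq m$ from the truncated, non-integrable moment. The only difference is one of presentation: you carry out the Taylor/moment bookkeeping and the $\delta$-dependent tail estimates explicitly, whereas the paper defers that calculation to the proof of Theorem~5.2 in \cite{colbrook2020computing}.
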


\begin{proof}
Using the fact that $\mu$ is absolutely continuous on $\Omega$, we partition the convolution of the measure and calculate that
\begin{equation}\label{eqn:approx_error}
|[K_\epsilon * \mu](\lambda)-\rho(\lambda)| \leq\left|\int_\Omega K_\epsilon(\lambda-\tilde\lambda)\rho(\tilde\lambda)\dd\lambda - \rho(\lambda)\right|+\left|\int_{\mathbb{R}\setminus\Omega} K_\epsilon(\lambda-\tilde\lambda)\dd\mu(\tilde\lambda)\right|.
\end{equation}
The first term on the right-hand side of~\eqref{eqn:approx_error} vanishes as $\epsilon\rightarrow 0$ when $\lambda\in\Omega$ is a Lebesgue point of $\rho$. This follows from Theorem 1.25 in the textbook by Stein and Weiss~\cite[p.~13]{stein1971introduction}, because $K$ and $\chi_\Omega\rho$ are integrable and the decay condition in~\cref{def:mth_order_kernel} implies that $\sup_{t\geq\lambda}|K(t)|$ is integrable with respect to $\lambda$. The second term also vanishes in the limit $\epsilon\rightarrow 0$, which follows from the decay condition for $K$ in~\cref{def:mth_order_kernel} and the fact that $\smash{\mu_{f,\phi}(\Delta_{-\infty}^\lambda)}$ is a function (with respect to $\lambda$) of bounded variation so that 
\begin{equation}\label{eqn:decay_bound}
\left|\int_{\mathbb{R}\setminus\Omega} K_\epsilon(\lambda-\tilde\lambda)\dd\mu(\tilde\lambda)\right|\leq \frac{C_k\epsilon^m}{[\inf_{\tilde\lambda\in\mathbb{R}\setminus\Omega}|\lambda-\tilde\lambda|]^{m+1}+\epsilon^{m+1}}\int_{\mathbb{R}\setminus\Omega}\dd|\mu|(\tilde\lambda)\rightarrow 0, \qquad\text{as } \epsilon\rightarrow 0,
\end{equation}
where $|\mu|$ is the total variation of $\mu$. This establishes convergence at the Lebesgue points of $\rho$ as $\epsilon\downarrow 0$. 

The derivation of the convergence rates for H\"older continuous densities also uses the partitioned convolution in~\cref{eqn:approx_error}. The first term in the bound in the statement of the theorem follows directly from~\cref{eqn:decay_bound}. The second term in the bound can be computed by expanding the integrand of the first term in~\cref{eqn:approx_error} in a Taylor series about $\lambda$, applying the vanishing moment conditions on $K$, and carefully bounding the remainder. The calculations are identical to the detailed proof of Theorem~5.2 in Appendix A.1 of Colbrook et al~\cite{colbrook2020computing}, with the exception that \cite{colbrook2020computing} focus on probability measures, in which case $\|\mu\|=1$.
\end{proof}

The weak convergence of wave packets in~\cref{thm:conv_ae,thm:conv_rates} are corollaries of~\cref{lem:borel_meas_approx}.

\begin{proof}[Proof of Theorems 1 and 2]
In light of~\cref{lem:borel_meas_approx}, consider the wave packet approximation, $\smash{u_\lambda^{(\epsilon)}}$, defined in~\cref{eqn:mth_stone}. If the spectrum of $A$ is absolutely continuous on an interval $\Omega$, then the spectral measures $\mu_{f,\phi}$ are also absolutely continuous there with Radon-Nikodym derivatives $\rho_{f,\phi}$. By the chain rule, we calculate that (for almost every $\lambda\in\Omega$)
\begin{equation}\label{eqn:gef_meas1}
\rho_f(\lambda)\langle u_\lambda| \phi\rangle = \rho_f(\lambda)\frac{\mathrm{d}\mu_{f,\phi}}{\mathrm{d}\mu_f}(\lambda) = \rho_{f,\phi}(\lambda) \qquad \forall\phi\in\Phi.
\end{equation}
On the other hand, from the spectral theorem for self-adjoint operators on a Hilbert space, we know that $\smash{\langle u^{(\epsilon)}_\lambda , \phi\rangle} = [K_\epsilon * \mu_{f,\phi}](\lambda)$. Consequently,~\cref{lem:borel_meas_approx} implies that $\langle \smash{u^{(\epsilon)}_\lambda , \phi\rangle \rightarrow \rho_f(\lambda)\langle u_\lambda| \phi\rangle}$ at every point in the Lebesgue set of $\rho_{f,\phi}$ (\cref{thm:conv_ae}) and establishes convergence rates for H\"older continuous densities (\cref{thm:conv_rates}). 
\end{proof}

To derive appropriate conditions under which the wave packet approximations converge in finer topologies (c.f.~\cref{thm:conv_point,thm:conv_abs}), we carefully reformulate the wave packet identity in~\cref{eqn:wave_packet} in terms of generalized eigenfunctions and extend it to the relevant topological vector space (TVS). We employ a weak notion of integrability of a vector-valued map $u:S \rightarrow V$ with respect to a Borel measure $\mu$ on a Hausdorff space $S$, where $V$ is a locally convex TVS whose dual $V^*$ separates points in $V$. The map $u$ is called Pettis integrable if, for each measurable set $\Omega\in\mathcal{B}(S)$, there is a $u_\Omega\in V$ such that~\cite{thomas1975integration}
\begin{equation}\label{eqn:pettis_integral}
\langle \ell | u_\Omega\rangle = \int_\Omega \langle \ell | u(\lambda) \rangle \dd\mu(\lambda), \qquad\text{for all}\qquad \ell\in V^*.
\end{equation}
The element $u_\Omega$ is called the Pettis integral of $u$ over $\Omega$ and we write $u_\Omega=\smash{\int_\Omega u\,\dd\mu(\lambda)}$.

To begin, we show that the wave packet identity for $u_\lambda^{(\epsilon)}$ can be understood via the Pettis integral of the map $\tilde\lambda\rightarrow K_\epsilon(\lambda-\tilde\lambda)u_{\tilde\lambda}$ in the locally convex TVS $\Phi^*$.

\begin{lemma}\label{lem:wp_expansion}
Let $A:D(A)\rightarrow H$ be a selfadjoint operator on a rigged Hilbert space $\Phi\subset H\subset\Phi^*$, where $\Phi$ is a countably Hilbert nuclear space and $A\Phi\subset\Phi$. Let $K$ be an $m$th order rational kernel satisfying $(i)-(iii)$ in~\cref{def:mth_order_kernel} and~\cref{eqn:vandermonde_condition}, and, given $f\in\Phi$, let $\smash{u_\lambda^{(\epsilon)}}$ be the associated wave packet approximation in~\cref{eqn:mth_stone}. If $\{u_\lambda\}_{\lambda\in\mathbb{R}}\subset\Phi^*$ are the generalized eigenfunctions of $A$ defined in~\cref{eqn:pvm_to_geig}, then for each fixed $\epsilon>0$ and $\lambda\in\mathbb{R}$, the map $\tilde \lambda \rightarrow K_\epsilon(\lambda-\tilde\lambda)u_{\tilde\lambda}$ is $\mu_f$-integrable with Pettis integrals
$$
E(\Omega)u_\lambda^{(\epsilon)} = \int_\Omega K_\epsilon(\lambda-\tilde\lambda)u_{\tilde\lambda}\,\dd\mu_f(\tilde\lambda)\in\Phi^*, \qquad\text{for each}\qquad \Omega\in\mathcal{B}(\mathbb{R}).
$$
Moreover, each such Pettis integral lies in the image $i(H)\subset\Phi^*$, where $i:H\hookrightarrow\Phi^*$ is the continuous injection of $H$ into $\Phi^*$ and, consequently, the equality holds in $H$.
\end{lemma}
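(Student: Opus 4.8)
The plan is to verify the defining property~\eqref{eqn:pettis_integral} of the Pettis integral directly, taking $V=\Phi^*$ equipped with the weak-$^*$ topology, so that its continuous dual is $V^*=\Phi$, which separates points in $\Phi^*$. The candidate for the integral is the element $E(\Omega)u_\lambda^{(\epsilon)}$, which lies in $H$ and hence in $i(H)\subset\Phi^*$. Concretely, I would show that for every test functional $\phi\in\Phi$ the scalar map $\tilde\lambda\mapsto K_\epsilon(\lambda-\tilde\lambda)\langle u_{\tilde\lambda}\,|\,\phi\rangle$ is $\mu_f$-integrable and that
$$
\langle E(\Omega)u_\lambda^{(\epsilon)}\,|\,\phi\rangle = \int_\Omega K_\epsilon(\lambda-\tilde\lambda)\langle u_{\tilde\lambda}\,|\,\phi\rangle\dd\mu_f(\tilde\lambda),\qquad\forall\phi\in\Phi.
$$
Because $\Phi$ separates points in $\Phi^*$, this one identity simultaneously establishes the weak integrability of $\tilde\lambda\mapsto K_\epsilon(\lambda-\tilde\lambda)u_{\tilde\lambda}$ and identifies its Pettis integral uniquely as $E(\Omega)u_\lambda^{(\epsilon)}$.

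For the scalar integrability, fix $\phi\in\Phi$. By~\eqref{eqn:geig_action}, the map $\tilde\lambda\mapsto\langle u_{\tilde\lambda}\,|\,\phi\rangle$ agrees $\mu_f$-almost everywhere with the Radon--Nikodym derivative $\dd\mu_{f,\phi}/\dd\mu_f$; it is therefore $\mu_f$-measurable and satisfies $\langle u_{\tilde\lambda}\,|\,\phi\rangle\dd\mu_f(\tilde\lambda)=\dd\mu_{f,\phi}(\tilde\lambda)$, a finite complex Borel measure. Since the poles $a_j$ of $K$ lie strictly in the open upper half-plane, the scaled kernel $K_\epsilon$ has no real poles and is bounded on $\mathbb{R}$ for each fixed $\epsilon>0$, so
$$
\int_\mathbb{R}|K_\epsilon(\lambda-\tilde\lambda)|\,|\langle u_{\tilde\lambda}\,|\,\phi\rangle|\dd\mu_f(\tilde\lambda)\le\|K_\epsilon\|_\infty\,|\mu_{f,\phi}|(\mathbb{R})<\infty,
$$
which yields integrability and, in particular, weak measurability.

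The identity itself follows from the functional calculus. Starting from the functional-calculus form~\eqref{eqn:wave_packet} of $u_\lambda^{(\epsilon)}$ and the projection-valued-measure relation $E(\Omega)\dd E(\tilde\lambda)=\chi_\Omega(\tilde\lambda)\dd E(\tilde\lambda)$, I would write $E(\Omega)u_\lambda^{(\epsilon)}=\int_\Omega K_\epsilon(\lambda-\tilde\lambda)\dd E(\tilde\lambda)f\in H$. Pairing with $\phi$ in $H$, and using that the $H$-inner product agrees with the duality pairing under $H=H^*$, gives $\langle E(\Omega)u_\lambda^{(\epsilon)}\,|\,\phi\rangle=\int_\Omega K_\epsilon(\lambda-\tilde\lambda)\dd\mu_{f,\phi}(\tilde\lambda)$; substituting $\dd\mu_{f,\phi}=\langle u_{\tilde\lambda}\,|\,\phi\rangle\dd\mu_f$ from the previous step then completes the identity. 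Taking $\Omega=\mathbb{R}$ recovers $\langle u_\lambda^{(\epsilon)},\phi\rangle=[K_\epsilon*\mu_{f,\phi}](\lambda)$, consistent with the proof of~\cref{thm:conv_ae,thm:conv_rates}. Finally, since $E(\Omega)u_\lambda^{(\epsilon)}\in H$ by construction, the Pettis integral lies in $i(H)$ and the equality holds in $H$, not merely weakly in $\Phi^*$.

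The main obstacle is foundational rather than computational: one must confirm that the weak (scalar) integrability established above, together with the existence of the candidate vector in $\Phi^*$, is exactly what the Pettis framework of~\cite{thomas1975integration} demands, and that $\Phi$ is a legitimate point-separating dual for $V=\Phi^*$ in the relevant topology (for a reflexive countably Hilbert nuclear space this holds in both the weak-$^*$ and strong dual topologies). Care is also needed because each $u_{\tilde\lambda}$ is only defined $\mu_f$-a.e., so the identification of $\langle u_{\tilde\lambda}\,|\,\phi\rangle$ with $\dd\mu_{f,\phi}/\dd\mu_f$ and the interchange of $E(\Omega)$ with the spectral integral must be justified modulo $\mu_f$-null sets; both are routine consequences of the nuclear spectral theorem and the absolute continuity $\mu_{f,\phi}\ll\mu_f$ recorded after~\eqref{eqn:pvm_to_geig}.
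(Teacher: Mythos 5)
Your proposal is correct and follows essentially the same route as the paper's proof: both verify the defining Pettis identity by pairing $E(\Omega)u_\lambda^{(\epsilon)}$ against each $\phi\in\Phi$, using the functional calculus to get $\int_\Omega K_\epsilon(\lambda-\tilde\lambda)\,\dd\mu_{f,\phi}$ and the Radon--Nikodym relation \cref{eqn:geig_action} to rewrite $\dd\mu_{f,\phi}=\langle u_{\tilde\lambda}|\phi\rangle\dd\mu_f$, then note membership in $i(H)$ because the candidate already lies in $H$. Your added details (boundedness of $K_\epsilon$ on $\mathbb{R}$ for scalar integrability, and the weak-$^*$ duality making $\Phi$ the point-separating dual of $\Phi^*$) are points the paper leaves implicit, but they do not change the argument.
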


\begin{proof}
Given any Borel set $\Omega\subset\mathcal{B}(\mathbb{R})$ and $\lambda\in\mathbb{R}$, we calculate directly that
\begin{equation*}
\langle E(\Omega)u_\lambda^{(\epsilon)} | \phi \rangle = \langle E(\Omega)u_\lambda^{(\epsilon)} , \phi \rangle 
= \int_\Omega K_\epsilon(\lambda-\tilde\lambda)\,\dd\mu_{f,\phi}(\tilde\lambda)
= \int_\Omega K_\epsilon(\lambda-\tilde\lambda)\langle u_{\tilde\lambda} | \phi\rangle \,\dd\mu_f(\tilde\lambda).
\end{equation*}
The first equality follows from the compatibility of the inner product on $H$ with the dual pairing on $\Phi^*\times\Phi$. The third equality follows from~\cref{eqn:geig_action}, i.e., that $\langle u_\lambda | \phi\rangle$ is the Radon-Nikodym derivative of $\mu_{f,\phi}$ with respect to $\mu_f$. For every $\tilde\lambda\in\mathbb{R}$, we have that $K_\epsilon(\lambda-\tilde\lambda)\langle u_{\tilde\lambda} | \phi\rangle =\langle  K_\epsilon(\lambda-\tilde\lambda) u_{\tilde\lambda} | \phi\rangle$ by linearity of the dual pairing. For each $\Omega\in\mathcal{B}(\mathbb{R})$, the above calculation holds for every $\phi\in\Phi$, which shows that $\tilde\lambda\rightarrow K_\epsilon(\lambda-\tilde\lambda)u_{\tilde\lambda}\in\Phi^*$ is Pettis integrable with Pettis integral over $\Omega$ given by $\smash{E(\Omega)u_\lambda^{(\epsilon)}\in\Phi^*}$. By the compatibility between the dual pairing on $\Phi^*\times\Phi$ and the inner product on $H$, $\smash{E(\Omega)u_\lambda^{(\epsilon)}\in\Phi^*}$ clearly defines a bounded linear function on $H$, corresponding to $\smash{E(\Omega)u_\lambda^{(\epsilon)}\in H}$ under the canonical identification of $H$ with its dual. Therefore, $\smash{E(\Omega)u_\lambda^{(\epsilon)}\in i(H)\subset\Phi^*}$.
\end{proof}

To extend the wave packet identity to other locally convex TVS and establish the convergence claims in~\cref{thm:conv_point,thm:conv_abs}, we will need to demonstrate Pettis integrability of the map $\tilde\lambda\rightarrow K_\epsilon(\lambda-\tilde\lambda)u_{\tilde\lambda}$ in other spaces. For this task,  we collect useful criteria for Pettis integrability on Suslin spaces, due to Thomas~\cite{thomas1975integration}, in the next lemma.

\begin{lemma}\label{lem:pettis_integrability_criteria}
Given a locally convex Suslin TVS $V$ and a Hausdorff space $S$ with finite Borel measure $\mu$, suppose that $u:S\rightarrow V$ has $u(S)\subset V$ bounded and that, for some subset $W\subset V^*$ separating points on $V$, $\langle \ell | u(\cdot)\rangle$ is $\mu$-measurable for all $\ell\in W$. Then, $u$ is Pettis integrable with respect to $\mu$ in the sense of~\cref{eqn:pettis_integral}.
\end{lemma}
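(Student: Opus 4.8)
The plan is to realize this statement as an assembly of Thomas's integration theory for Suslin spaces~\cite{thomas1975integration}: the substantive content is an \emph{existence} theorem, and our task is to verify its hypotheses and organize them into the Pettis property~\cref{eqn:pettis_integral}. I would proceed in three stages: (i) upgrade the assumed $W$-scalar measurability of $u$ to full Borel measurability; (ii) deduce scalar integrability of $u$ from the boundedness of its image together with the finiteness of $\mu$; and (iii) produce, for each $\Omega\in\mathcal{B}(S)$, the integral element $u_\Omega\in V$. The structural fact driving all three stages is that a Suslin space is a \emph{Radon space}, so every finite Borel measure on $V$ is tight.

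For stage (i), the key is the measure-theoretic hallmark of Suslin spaces: the $\sigma$-algebra generated by any separating family of continuous linear functionals already coincides with the Borel $\sigma$-algebra of $V$. Mechanically, one extracts from $W$ a countable separating subfamily $\{\ell_n\}\subset W$, so that the induced injection $v\mapsto(\langle\ell_n|v\rangle)_n$ is a Borel isomorphism of $V$ onto its image in $\mathbb{C}^{\mathbb{N}}$. Since each $\lambda\mapsto\langle\ell_n|u(\lambda)\rangle$ is $\mu$-measurable by hypothesis, $u$ is Borel measurable, whence $\lambda\mapsto\langle\ell|u(\lambda)\rangle$ is $\mu$-measurable for \emph{every} $\ell\in V^*$. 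I would cite Thomas for the precise measurability statement rather than re-derive it.

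Stage (ii) is then short. Boundedness of $u(S)$ means precisely that $\sup_{\lambda\in S}|\langle\ell|u(\lambda)\rangle|<\infty$ for every $\ell\in V^*$ (the characterization of bounded sets via continuous functionals, \cite[Thm~3.18]{rudin}). With $\mu$ finite, each $\lambda\mapsto\langle\ell|u(\lambda)\rangle$ is a bounded measurable function and hence $\mu$-integrable, so $\ell\mapsto\int_\Omega\langle\ell|u(\lambda)\rangle\,\dd\mu(\lambda)$ is a well-defined linear functional on $V^*$ for each $\Omega$.

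Stage (iii) is where I expect the genuine difficulty, and it is the heart of Thomas's theorem: the functional just constructed must be represented by an honest element $u_\Omega\in V$, not merely by an element of the algebraic dual of $V^*$. Here I would push $\mu$ forward by $u$ (legitimate after stage (i)) to obtain a finite Borel measure $\nu=u_*(\mu\restriction_\Omega)$ on $V$, invoke the Radon property to get tightness, and realize $u_\Omega$ as the barycenter of $\nu$: approximating $\nu$ by its restrictions to compact sets---on which barycenters exist because the closed convex hull of a compact set is compact in the relevant complete setting---and passing to the limit, with boundedness of the image controlling the tails. The defining identity $\langle\ell|u_\Omega\rangle=\int_\Omega\langle\ell|u\rangle\,\dd\mu$ then holds for all $\ell\in V^*$ by dominated convergence, which is exactly~\cref{eqn:pettis_integral}. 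The delicate point---and the reason the Suslin hypothesis is indispensable---is guaranteeing that this limiting barycenter remains inside $V$; I would invoke Thomas~\cite{thomas1975integration} for this existence result rather than reconstruct the barycenter argument in detail.
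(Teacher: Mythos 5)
Your proposal is correct and takes essentially the same route as the paper: the paper's proof is exactly your skeleton, citing Thomas for the upgrade from weak measurability with respect to the separating set $W$ to full measurability (his Theorem~1), and then for the existence of the Pettis integral of a measurable map with bounded image against a finite measure (his Corollary~3.1 and Theorem~3). The additional detail you sketch in stages (i) and (iii) — the countable separating subfamily and the barycenter/tightness construction — is an expository unpacking of the cited results rather than a different argument, and you rightly defer to Thomas at the genuinely hard points.
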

\begin{proof}
Since $u$ is weakly measurable with respect to the total set $W\subset V^*$, $u$ is measurable~\cite[Thm.~1]{thomas1975integration}. Since $u(S)\subset V$ is also bounded, it is Pettis integrable (c.f.~\cite[Cor.~3.1]{thomas1975integration} and~\cite[Thm.~3]{thomas1975integration}).
\end{proof}

We now prove~\cref{thm:conv_point}, i.e., that when $H=L^2(S,\nu)$ and the generalized eigenfunctions satisfy suitable uniform continuity and integrability assumptions, the wave packet approximations converge uniformly on compact subsets of $S$. The basic idea is to show that the wave packet identity holds pointwise on the relevant compact set $K$ under the hypothesis of~\cref{thm:conv_point} and apply~\cref{lem:borel_meas_approx} to each $x$-slice. The $K$-uniformity hypotheses ensure that convergence and convergence rates are uniform over $K$. (We can also replace the condition $\sup_{\lambda}\|u_\lambda|_K\|_{C(K)}<\infty$ with the weaker condition that $\|u_\lambda|_K\|_{C(K)}$ is $\mu_f$-integrable here if we use Bochner integrability, the separability of $C(K)$, and the Pettis measurability theorem.)

\vspace{2mm}
\noindent{\textbf{Restatement of \cref{thm:conv_point}}} (Uniform convergence on compact sets)\textbf{.}
\textit{Let the hypotheses of~\cref{thm:conv_ae} hold with $H=L^2(S,\nu)$ for metric space $S$ and strictly positive $\nu$. Given compact domain $K\subset S$, suppose $\Phi|_K$ is dense in $L^2(K,\nu)$ and $u_\lambda$ is continuous on $K$ for all $\lambda\in\mathbb{R}$ with $\sup_{\lambda\in\mathbb{R}}\|u_\lambda|_K\|_{C(K)}<\infty$. Given $\lambda_*\in{\rm int}(\Omega)$ and $\delta>0$ such that $I=[\lambda_*-\delta,\lambda_*+\delta]\in{\rm int}(\Omega)$, if $(x,\lambda) \rightarrow \rho_f(\lambda) u_\lambda|_k(x)$ is $K$-uniformly continuous on $I$ and $\smash{u_{\lambda_*}^{(\epsilon)}|_K\in C(K)}$,
$$
\lim_{\epsilon\rightarrow 0^+} \sup_{x\in K}\lvert u_{\lambda_*}^{(\epsilon)}(x) - \rho_f(\lambda_*)u_{\lambda_*}(x)\rvert = 0.
$$
Moreover, if $(x,\lambda)\rightarrow \rho_f(\lambda)u_\lambda|_K(x)$ is $K$-uniformly $(n,\alpha)$-H\"older continuous on $I$ with nonnegative integer $n$ and $\alpha\in[0,1]$, then
$$
\sup_{x\in K}\lvert u_{\lambda_*}^{(\epsilon)}(x) - \rho_f(\lambda_*)u_{\lambda_*}(x)\rvert = \mathcal{O}(\epsilon^{n+\alpha}) + \mathcal{O}(\epsilon^m\log(\epsilon)),\qquad\text{as}\qquad\epsilon\rightarrow 0^+.
$$}
\vspace{1mm}

\begin{proof}
First, for each $\epsilon>0$, the map $\smash{v_\epsilon:\lambda\rightarrow K_\epsilon(\lambda_*-\lambda)u_\lambda|_K}$ satisfies the criteria for Pettis integrability in~\cref{lem:pettis_integrability_criteria}. We have $\mu_f(\mathbb{R})<\infty$ and, for each $\epsilon>0$, $\smash{v_\epsilon(\mathbb{R})}$ is a bounded set in $C(K)$ because $K_\epsilon(\lambda_*-\cdot)$ is bounded and $\sup_{\lambda\in\mathbb{R}}\|u_\lambda|_K\|_{C(K)}<\infty$. 
To verify measurability, note that each $\phi\in\Phi|_K$ defines a linear functional on $C(K)$ via $\smash{\int_K \phi(x)f(x)\,\dd\nu(x)}$. Moreover, the collection of all such linear functionals separates points in $C(K)$ because $\Phi|_K$ is dense in $L^2(K)$ and $L^2(K)$ separates points in $C(K)$. Now, $\int_k \phi v_\epsilon(\cdot)\,\dd\nu = \langle v_\epsilon(\cdot) | \phi \rangle$ for any $\phi\in\Phi|_K$ because $u_\lambda$ is continuous on $K$. Since $\langle v_\epsilon(\cdot) |\phi \rangle$ is $\mu_f$-integrable by~\cref{lem:wp_expansion} and, in particular, $\mu_f$-measurable, we can apply~\cref{lem:pettis_integrability_criteria} to conclude that $v_\epsilon$ is Pettis integrable on $C(K)$.

Now, we claim that the Pettis integral of $v_\epsilon$ over $\mathbb{R}$ is precisely $\smash{u_{\lambda_*}^{(\epsilon)}|_K}$. For each $\phi\in \Phi$ with compact support in $K$, we have that
\begin{align*}
\int_K \phi \left[\int_\mathbb{R}  v_\epsilon(\lambda)\,\dd\mu_f(\lambda)\right]\,\dd\nu 
&= \int_\mathbb{R} \int_K \phi\,v_\epsilon(\lambda)\,\dd\nu\,\dd\mu_f(\lambda) \\
&= \int_\mathbb{R}  \langle v_\epsilon(\lambda) | \phi\rangle\,\dd\mu_f(\lambda) \\
&= \int_K \phi\, u_{\lambda_*}^{(\epsilon)}\,\dd\nu.
\end{align*}
The first equality follows from Pettis integrability of $\smash{v_\epsilon}$ on $C(K)$ and the second equality follows from the continuity of $u_\lambda$ on $K$. The last inequality follows from~\cref{lem:wp_expansion}, the compatibility of the dual pairing for $\Phi^*$ and $\Phi$ with the inner product on $H$, and the fact that $\Phi$ has compact support in $K$ so that $\langle u_\lambda^{(\epsilon)} | \phi\rangle = \int_K\phi u_\lambda^{(\epsilon)}\,\dd\nu$. Since equality holds for any $\phi\in \Phi$ with compact support in $K$ and $\Phi|_K$ is dense in $L^2(K,\nu)$, we conclude that $\smash{u_{\lambda_*}^{(\epsilon)}}$ is equal to the Pettis integral of $\smash{v_\epsilon}$ over $\mathbb{R}$ at $\nu$-a.e. $x\in K$. On the other hand both $\smash{u_{\lambda_*}^{(\epsilon)}}$ and the Pettis integral of $\smash{v_\epsilon}$ over $\mathbb{R}$ are continuous functions on $K$. Since $\nu$ is strictly positive, every open subset of $K=\overline{D}$ has positive measure and, consequently, $\smash{u_{\lambda_*}^{(\epsilon)}}$ must be equal to the Pettis integral of $\smash{v_\epsilon}$ over $\mathbb{R}$ at every $x\in K$.

Now, denote the linear functional that evaluates a function in $C(K)$ at the point $x$ by $\delta_x$. By the definition of the Pettis integral in~\cref{eqn:pettis_integral}, we have that
$$
u_{\lambda_*}^{(\epsilon)}(x)
= \langle \delta_x | u_{\lambda_*}^{(\epsilon)}\rangle 
= \int_\mathbb{R} K(\lambda_*-\lambda)\langle \delta_x | u_\lambda\rangle \,\dd\mu_f(\lambda) 
= \int_\mathbb{R} K(\lambda_*-\lambda)u_\lambda(x) \,\dd\mu_f(\lambda).
$$
Moreover, $\omega_x(\Omega):\Omega\rightarrow \smash{\int_\Omega K(\lambda_*-\lambda)u_\lambda(x) \,\dd\mu_f(\lambda)}$ defines a finite Borel measure on $\mathcal{B}(\mathbb{R})$ for each $x\in K$. We can therefore apply~\cref{lem:borel_meas_approx} to each $\omega_x$ to obtain convergence, $u_{\lambda_*}^{(\epsilon)}\rightarrow \rho_f(\lambda_*)u_{\lambda_*}$, at each $x\in K$. In fact, since the map $\lambda\rightarrow\rho_f(\lambda)u_\lambda$ is $K$-uniformly continuous, it is easy to check that the first term in the approximation error in~\cref{eqn:approx_error} converges uniformly for all $x\in K$ as $\epsilon\rightarrow 0$. Moreover, since $\lambda\rightarrow u_\lambda$ is bounded in $C(K)$, it is also $K$-uniformly integrable with respect to $\mu_f$ so that the total variation of the Borel measures $\omega_x$ is uniformly bounded on $K$. Therefore the second term in the approximation error in~\cref{eqn:approx_error} (c.f.,~\cref{eqn:decay_bound}) also converges to zero uniformly for all $x\in K$ as $\epsilon\rightarrow 0$. Similarly, when $\lambda\rightarrow \rho_f(\lambda)u_\lambda$ satisfies the $K$-uniformly H\"older conditions, the convergence rates for the measures $\omega_x$ are uniform for all $x\in K$ as $\epsilon\rightarrow 0$.
\end{proof}

To prove~\cref{thm:conv_abs}, i.e., convergence and approximation rates in a locally convex Suslin TVS $F$, we use a strategy similar to the proof of~\cref{thm:conv_point}. First, we show that the wave packet $u_\lambda^{(\epsilon)}$ has the expected generalized eigenfunction expansion in the sense of a Pettis integral on $F$. With the wave packet identity in place, we apply~\cref{lem:borel_meas_approx} to obtain convergence and approximation rates under suitable regularity conditions on $\lambda\rightarrow \rho_f(\lambda)u_\lambda$.

\vspace{2mm}
\noindent{\textbf{Restatement of \cref{thm:conv_abs}}} (Convergence in Suslin spaces)\textbf{.}
\textit{Let the hypotheses of~\cref{thm:conv_ae} hold and, given a locally convex Suslin TVS, $F$, suppose that $\lambda\rightarrow u_\lambda$ has bounded image in $F$. Additionally, suppose that there is a continuous embedding $i:\Phi\rightarrow F^*$ such that $\langle i(\phi) | f \rangle = \langle f | \phi\rangle$ when $f\in F \bigcap \Phi^*$, $\phi\in\Phi$, and that $i(\Phi)\subset F^*$ separates points in $F$. Given any $\lambda_*\in{\rm int}(\Omega)$ and $\psi\in F^*$, if $\smash{u_{\lambda_*}^{(\epsilon)}\in F}$ and $\lambda_*\in\Gamma_{\rm Leb}(\rho_f(\lambda)\langle\psi | u_\lambda\rangle)$, it holds that
$$
\lim_{\epsilon\rightarrow 0^+}\langle \psi | u_{\lambda_*}^{(\epsilon)}\rangle = \rho_f(\lambda_*)\langle \psi | u_{\lambda_*}\rangle.
$$
Moreover, given $\delta>0$ such that $I=[\lambda_*-\delta,\lambda_*+\delta]\in{\rm int}(\Omega)$, if $\lambda\rightarrow \rho_f(\lambda)\langle \psi | u_{\lambda}\rangle$ is $(n,\alpha)$-H\"older continuous on $I$ with nonnegative integer $n$ and $\alpha\in[0,1]$, then
$$
\lvert\langle \psi | u_{\lambda_*}^{(\epsilon)}\rangle - \rho_f(\lambda_*)\langle \psi | u_{\lambda_*}\rangle\rvert = \mathcal{O}(\epsilon^{n+\alpha}) + \mathcal{O}(\epsilon^m\log(\epsilon)),\qquad\text{as}\qquad\epsilon\rightarrow 0^+.
$$
Finally, the conclusions hold if $\langle \psi|\cdot\rangle$ is replaced by any continuous, $\mu_f$-integrable seminorm $|\cdot|$ on $F$.}
\vspace{1mm}

\begin{proof}
First, we claim that, for each fixed $\epsilon>0$, $\lambda\rightarrow K_\epsilon(\lambda_*-\lambda)u_\lambda$ is Pettis integrable on $F$. Since $\lambda\rightarrow u_\lambda$ has bounded image in $F$ and $K_\epsilon(\lambda_*-\cdot)$ is a continuous and bounded scalar function, by~\cref{lem:pettis_integrability_criteria}, it remains only to verify scalar measurability with respect to a total set in $F^*$. By the compatibility of dual pairs of $F$, $F^*$ and $\Phi$, $\Phi^*$, we have that
$$
\langle i(\phi) | K_\epsilon(\lambda_*-\lambda)u_\lambda \rangle = \langle K_\epsilon(\lambda_*-\lambda)u_\lambda | \phi \rangle,
$$
is $\mu_f$-integrable and, in particular, $\mu_f$-measurable. This establishes Pettis integrability.

Now, since $u_{\lambda_*}^{(\epsilon)}\in F$, by~\cref{lem:wp_expansion} and compatible dual pairs we have that for any $\phi\in\Phi$,
\begin{equation*}
\langle i(\phi) | u_{\lambda_*}^{(\epsilon)} \rangle = \langle u_{\lambda_*}^{(\epsilon)}|\phi \rangle = \int_{-\infty}^\infty K_\epsilon(\lambda-\lambda_*) \langle u_\lambda|\phi\rangle \,\dd\mu_f(\lambda) = \int_{-\infty}^\infty K_\epsilon(\lambda-\lambda_*) \langle i(\phi) | u_\lambda\rangle \,\dd\mu_f(\lambda).
\end{equation*}
Therefore, $u_{\lambda_*}^{(\epsilon)}$ is the Pettis integral of $\lambda\rightarrow K_\epsilon(\lambda_*-\lambda)u_\lambda$ (over $\mathbb{R}$) in $F$. To conclude the proof, we apply~\cref{lem:borel_meas_approx} directly to the Borel measures defined by
$$
\omega_\psi(\Omega) = \int_\Omega K_\epsilon(\lambda_*-\lambda)\langle \psi | u_\lambda \rangle\,\dd\mu_f(\lambda).
$$
For a $\mu_f$-integrable continuous seminorm on $F$, we simply note that Pettis integrability implies that the seminorm commutes with the integral, so that~\cite{thomas1975integration}
$$
|u_\lambda^{(\epsilon)}|=\int_{-\infty}^\infty K_\epsilon(\lambda_*-\lambda)|u_\lambda|\,\dd\mu_f(\lambda),
$$
and we can apply~\cref{lem:borel_meas_approx} to the induced Borel measure.
\end{proof}

\subsection*{Acknowledgements}

We would like to thank Semyon Dyatlov, Zhenhao Li, and Maciej Zworski for interesting discussions during the completion of this work and introducing us to the internal waves problem.

\bibliographystyle{abbrv}
\bibliography{GE_bib}

\end{document}